\newenvironment{rezabib}
  {\bibdiv\biblist\setupbib}
  {\endbiblist\endbibdiv}
  \def\setupbib{\catcode`@=\active}
\def\gatherkey#1#2{\gatherkeyaux{#1}#2\gatherkeyaux}
\def\gatherkeyaux#1#2,#3\gatherkeyaux{\bib{#2}{#1}{#3}}
\newtheorem{theorem}{Theorem}[section]
\newtheorem{proposition}[theorem]{Proposition}
\newtheorem{lemma}[theorem]{Lemma}
\newtheorem{corollary}[theorem]{Corollary}
\numberwithin{equation}{section}
\theoremstyle{definition}
\newtheorem{remarks}[theorem]{Remarks}
\newtheorem{notation}[theorem]{Notation}
\newtheorem{remark}[theorem]{Remark}
\DeclareMathOperator{\GL}{GL}
 \DeclareMathOperator{\Ad}{Ad}
\renewcommand{\Re}{{\mathfrak{Re}}}
\renewcommand{\Im}{{\mathfrak{Im}}}
 \newcommand{\mymod}[1]{(\operatorname{mod} #1)}
\begin{document}

\begin{center}

\title{Fourier coefficients of  automorphic $L$-functions over primes in ray classes}
\author{Amir Akbary}
\address{Department of Mathematics and Computer Science, University of Lethbridge, Lethbridge, Alberta T1K 3M4, Canada}
\email{amir.akbary@uleth.ca}

\author[P.J. Wong]{Peng-Jie Wong}
\address{National Center for Theoretical Sciences\\
No. 1, Sec. 4, Roosevelt Rd., Taipei City, Taiwan}
\email{pengjie.wong@ncts.tw}
\subjclass[2010]{11F30, 11M41, 11N13} 

\thanks{Research of the first author is partially supported by NSERC. The second author is currently an NCTS postdoctoral fellow; he was supported by a PIMS postdoctoral fellowship and the University of Lethbridge during part of this research.}


\begin{abstract}
We prove Siegel-Walfisz type theorems (over long and short intervals) for the Fourier coefficients of certain  automorphic $L$-functions and Rankin-Selberg $L$-functions over number fields. 
\end{abstract}
\maketitle
\end{center}

\section{Introduction}\label{S1}

Let $p$ and $\gamma$ denote primes and positive real numbers, respectively, and  let $q>0$ and $a$ be coprime integers. The uniform version of the prime number theorem in arithmetic progressions, known as the Siegel-Walfisz theorem, provides the existence of a constant $c:=c(\gamma)>0$, depending only on $\gamma$, such that if $q\leq (\log{x})^\gamma$, then
\begin{equation}
\label{pnt}
\sum_{\substack { p\leq x\\ p\equiv a~\mymod{ {q}}}} \log{p}  = \frac{x}{\phi(q)}+O \left( x \exp \left( -c (\log{x})^{\frac{1}{2}} \right) \right),
\end{equation}
where $\phi(.)$ is Euler's totient function (see \cite[p. 133]{D}). The short interval version of this theorem states that for $y=x^\theta$, with $\theta>7/12$, one has
\begin{equation}
\label{pnt-short}
\sum_{\substack {x-y<{p}\leq x\\ {p} \equiv {a}~\mymod{{q}}}} \log{p} \sim \frac{y}{\phi(q)},
\end{equation}
as $x\rightarrow \infty$, uniformly for $q \leq (\log{x})^\gamma$ (see \cite[p. 316]{T}). 
Our goal in this paper is to prove theorems analogous to the above results for the Fourier coefficients of automorphic $L$-functions and Rankin-Selberg $L$-functions over number fields. 
Previous work on automorphic extensions of \eqref{pnt} (for example \cite{P82} and \cite{I00}) treats only the classical modular forms  or as \cite{PJ19}  is under the assumption of the Generalized Ramanujan Conjecture. Here, we prove unconditionally extensions of \eqref{pnt} and bounds of correct order of magnitude for the sum in \eqref{pnt-short},  
for certain $L$-functions of degree less than or equal to four. Our focus here is on non-abelian $L$-functions. For results related to degree one $L$-functions, see \cite{Mi56} and \cite{G}.
In order to state our results, we start with introducing some terminology and notation.

Let $F$ be a number field, and set $n_F=[F:\mathbb{Q}]$. Let $\pi$ be  an irreducible cuspidal automorphic representation of $\GL_m(\Bbb{A}_{F})$ with unitary central character.  
We shall call such representations, for short,  \emph{cuspidal representations} of $\GL_m(\Bbb{A}_{F})$. 
Associated to $\pi$, there is an integer $A_\pi\geq 1$, called the \emph{conductor} of $\pi$, and a collection of complex numbers $\alpha_\pi(j, \mathfrak{p})$,  for $1\leq j \leq m$, called the \emph{local parameters}, such that for any $\mathfrak{p} \nmid  A_\pi$,  we have $\alpha_\pi(j, \mathfrak{p})\neq 0$. We call a prime $\mathfrak{p}\nmid A_\pi$ an \emph{unramified} prime. The \emph{Generalized Ramanujan Conjecture} (GRC) states that, for $1\leq j \leq m$,  $|\alpha_\pi(j, \mathfrak{p})|= 1$ for unramified primes $\mathfrak{p}$,  and  $|\alpha_\pi(j, \mathfrak{p})|\leq1$ for ramified primes $\mathfrak{p}$. 
The truth of GRC is known for $m=1$ and for cuspidal representations that can be associated to Galois representations (for instance, the cuspidal representations arising from modular forms). Corresponding to each cuspidal representation $\pi$, there is a cuspidal representation $\check{\pi}$, the \emph{contragredient} representation. 
The collection of the local parameters for $\check{\pi}$ coincides with the collection of the complex conjugates of the local parameters for $\pi$ (i.e., $\{\alpha_{\check{\pi}} (j, \mathfrak{p})\}= \{\overline{\alpha_\pi(j, \mathfrak{p})}\}$). For integer $k\geq 1$, we set $$a_\pi(\mathfrak{p}^k)=\sum_{j=1}^{m} \alpha_\pi(j, \mathfrak{p})^k.$$ 
For a cuspidal representation $\pi$ and an id\'{e}le class character $\psi$, the \emph{twist of $\pi$ by $\psi$}, denoted $\pi \otimes \psi$, is the representation of ${\rm GL}_n(\mathbb{A}_F)$ defined by $(\pi \otimes \psi)(g)=\psi({\rm det}(g))\pi(g)$ for $g\in {\rm GL}_n(\mathbb{A}_F)$.

Let $\pi$ and $\pi^\prime$, respectively, be  cuspidal representations of $\GL_m(\Bbb{A}_{F})$ and  $\GL_{m^\prime} (\Bbb{A}_{F})$ of conductor $A_\pi$ and $A_{\pi^\prime}$.  Let $L(s, \pi \times \pi^\prime)$ denote the Rankin-Selberg $L$-function associated with $\pi$ and $\pi^\prime$, where $s=\sigma+it$ is a complex variable. 
For $\Re(s)>1$, we have
$$-\frac{L^\prime}{L}(s, \pi\times \pi^\prime)= \sum_{\mathfrak{n}\neq \mathfrak{0}}^{} \frac{\Lambda(\mathfrak{n}) a_{\pi \times \pi^\prime}(\mathfrak{n})}{{\rm N} {\mathfrak{n}^s}},
$$ 
where ${\rm N}{\mathfrak{n}}$ is the norm of the ideal $\mathfrak{n}$, $\Lambda(\mathfrak{n})$ is the number field analogue of the von Mangoldt function (i.e., $\Lambda(\mathfrak{p}^k)=\log{{\rm N} \mathfrak{p}}$ and $\Lambda(\mathfrak{n})=0$ if $\mathfrak{n}$ is not a power of a prime ideal),  and 
$$a_{\pi \times \pi^\prime}({\mathfrak{p}}^k)= a_{\pi}({\mathfrak{p}}^k) a_{\pi^\prime}({\mathfrak{p}}^k)$$
for $\mathfrak{p} \nmid (A_\pi, A_{\pi^\prime})$.
It is known that $L(s, \pi \times \pi^\prime)$ has an analytic continuation to the whole complex plane with possible simple poles at $s=i\tau$ or $s=1+i\tau$ for some $\tau\in \mathbb{R}$,  where poles exist if and only if ${\pi^\prime} \simeq  \check{\pi} \otimes |\cdot|^{-i\tau}$. In particular, $L(s, \pi\times\check{\pi})$ has only a simple pole at $s=1$. Throughout this paper we assume, unless otherwise stated, that $\pi$ and $\pi^\prime$ are normalized such that their central characters are trivial on the diagonally embedded copy of the positive reals. This normalization will ensure that the possible simple pole of $L(s, \pi \times \pi^\prime)$ at $s=1+i\tau$ can only occur at $s=1$.
(For a review of the basic properties of the Rankin-Selberg $L$-functions, see  \cite[Chapter 5]{IK}.)
We set $L(s, \pi)=L(s, \pi \times 1)$, where $1$ is the trivial representation. 

If a representation $\pi$ is isomorphic to  $\boxplus_{i=1}^{k} \pi_i$, for cuspidal representations $\pi_i$ of ${\rm GL}_{m_i}(\mathbb{A}_F)$, then $\pi$ is called an \emph{(isobaric) automorphic representation} of ${\rm GL}_{m_1+\cdots+m_k}(\mathbb{A}_F)$. For $\pi$ and $\pi^{\prime}$ as above with $1\leq m \leq 2$ and $1\leq m^\prime \leq 3$, it is known that there exist an automorphic representation of ${\rm GL}_{m m^\prime}(\mathbb{A}_F)$, denoted by $\pi \boxtimes \pi^\prime$, such that $L(s, \pi\boxtimes\pi^\prime)= L(s, \pi\times\pi^\prime)$ and $a_{\pi\boxtimes\pi^\prime}=a_{\pi \times \pi^\prime}$ (see \cite{R} and \cite{KS}).

We call a cuspidal  representation $\pi$ \emph{self-dual} if ${\pi}\simeq \check\pi$. Also, a cuspidal representation $\pi$
is called  \emph{essentially self-dual} if ${\pi}\simeq \check\pi \otimes \psi$ for some id\`ele class character $\psi$ of $F$. A cuspidal representation $\pi$ of $\GL_2(\Bbb{A}_{F})$ is called  \emph{dihedral} if it  admits a non-trivial self-twist (i.e., $\pi \simeq \pi \otimes \psi$ for some non-trivial id\`{e}le class character $\psi$ of $F$). We say that two cuspidal representations $\pi$ and $\pi^\prime$ are \emph{twist-equivalent} if $\pi^\prime \simeq \pi\otimes \psi$ for some id\`{e}le class character $\psi$ of $F$.

We are ready to state our first result. 
\begin{theorem}
\label{first}
Assume that for an automorphic representation $\Pi$, one of the following hold:

\begin{enumerate}

\item[(a)] $\Pi\simeq \pi$,  where $\pi$ is  a cuspidal representation of $\GL_m(\Bbb{A}_{F})$ for $2\leq m \leq 3$, or  $\pi$ is  a cuspidal representation of $\GL_4(\Bbb{A}_{F})$ that is not essentially self-dual.

\item[(b)] $\Pi\simeq \pi \boxtimes \check{\pi}$, where $\pi$ is a non-dihedral  cuspidal  representation of $\GL_2(\Bbb{A}_{F})$.


\item[(c)] $\Pi\simeq \pi \boxtimes {\pi}^\prime$, where $\pi$ and $\pi^\prime$ are non-dihedral  cuspidal  representations of $\GL_2(\Bbb{A}_{F})$  that are not twist-equivalent.

\end{enumerate}
Then, for any $\gamma>0$, there exists  $c:=c(\Pi)>0$ such that for any ideal $\mathfrak{q}$, with ${\rm N}{\mathfrak{q}}\leq (\log{x})^\gamma$, and any ideal $\mathfrak{a}$ relatively prime to $\mathfrak{q}$, we have
\begin{equation}
\label{twotwo}
\sideset{}{^*}\sum_{\substack {{\rm N} \mathfrak{p}\leq x\\ \mathfrak{p} \sim \mathfrak{a}~\mymod{ \mathfrak{q}}}} \Lambda(\mathfrak{p}) a_{\Pi}(\mathfrak{p}) = \frac{\delta(\Pi)}{h(\mathfrak{q})}x+O_{\Pi, \gamma} \left( x \exp \left( -c (\log{x})^{\frac{1}{2}} \right) \right),
\end{equation}
where, as later,  the superscript ``$*$"  in the above sum means that the sum is taken over primes $\mathfrak{p}$  that $\mathfrak{p}\nmid (A_\pi, A_{\pi^\prime})$,  $\mathfrak{p}\sim\mathfrak{a}~\mymod{ \mathfrak{q}}$ means that $\mathfrak{p}$ and $\mathfrak{a}$ belong to the same ray class of the ray class group modulo $\mathfrak{q}$, $h(\mathfrak{q})$ is the number of ray classes modulo $\mathfrak{q}$, $\delta(\Pi)=0$ in (a) and (c), and $\delta(\Pi)=1$ in (b).  
\end{theorem}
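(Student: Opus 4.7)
The plan is to detect the ray class condition via orthogonality of characters of the ray class group modulo $\mathfrak{q}$ (each lifted to a finite-order id\`ele class character of $F$), reducing \eqref{twotwo} to a twisted automorphic prime number theorem for $L(s,\Pi\otimes\chi)$ uniform in the conductor of $\chi$. Orthogonality yields
$$\sideset{}{^*}\sum_{\substack{\N\mathfrak{p}\leq x\\\mathfrak{p}\sim\mathfrak{a}\mymod{\mathfrak{q}}}}\Lambda(\mathfrak{p})a_\Pi(\mathfrak{p})=\frac{1}{h(\mathfrak{q})}\sum_{\chi\mymod{\mathfrak{q}}}\overline{\chi(\mathfrak{a})}\sideset{}{^*}\sum_{\N\mathfrak{p}\leq x}\Lambda(\mathfrak{p})a_{\Pi\otimes\chi}(\mathfrak{p}),$$
and extending the inner sum from primes to prime powers costs only $O(x^{1/2+\varepsilon})$ by the Rankin-Selberg mean-value bound $\sum_{\N\mathfrak{n}\leq x}|a_\Pi(\mathfrak{n})|^2\ll_\Pi x$. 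The task is thereby reduced to a uniform estimate for $\psi(x,\Pi\otimes\chi):=\sum_{\N\mathfrak{n}\leq x}\Lambda(\mathfrak{n})a_{\Pi\otimes\chi}(\mathfrak{n})$.

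A truncated Perron formula applied to $-L'/L(s,\Pi\otimes\chi)$, followed by a contour shift past $\sigma=1$, gives the standard explicit formula $\psi(x,\Pi\otimes\chi)=\delta_\chi\,x-\sum_\rho x^\rho/\rho+O(x(\log x)^2/T)$, where $\delta_\chi$ is the order of the pole of $L(s,\Pi\otimes\chi)$ at $s=1$. In case (a), twisting a cuspidal representation by $\chi$ keeps it cuspidal, so $L(s,\Pi\otimes\chi)$ is entire and $\delta_\chi=0$. In case (c), Ramakrishnan's theorem makes $\Pi$ cuspidal on $\GL_4$, and the non-twist-equivalence hypothesis is stable under twisting, so again $\delta_\chi=0$. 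In case (b), the factorization $(\pi\boxtimes\check\pi)\otimes\chi\simeq(\Ad(\pi)\otimes\chi)\boxplus\chi$, combined with Gelbart-Jacquet's cuspidality of $\Ad(\pi)$ on $\GL_3$ (using the non-dihedral hypothesis), yields $\delta_\chi=1$ precisely when $\chi$ is the principal character. Since $\chi_0(\mathfrak{a})=1$, the main terms consolidate to the asserted $\delta(\Pi)\,x/h(\mathfrak{q})$.

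The error is controlled by a de la Vall\'ee Poussin type zero-free region $\sigma\geq 1-c_\Pi/\log(Q(\Pi\otimes\chi)(|t|+3))$ with at most one exceptional real zero $\beta_\chi$. When $\Pi\otimes\chi$ is not self-dual---automatic in (a) for $m=4$ by the non-essentially-self-dual hypothesis, and in (c) by non-twist-equivalence---the Rankin-Selberg $L$-function $L(s,(\Pi\otimes\chi)\times(\Pi\otimes\chi))$ is entire, and the Hadamard-de la Vall\'ee Poussin positivity argument rules out such a zero. In the remaining self-dual instances (case (a) with $m\leq 3$ and case (b)), one applies an ineffective Siegel-type bound $1-\beta_\chi\gg_{\Pi,\varepsilon}Q(\Pi\otimes\chi)^{-\varepsilon}$, available at degree $\leq 4$ from Hoffstein-Ramakrishnan-type work. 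Combined with $Q(\Pi\otimes\chi)\ll_\Pi(\N\mathfrak{q})^{O_\Pi(1)}\leq(\log x)^{O_\Pi(\gamma)}$ and the choice $T=\exp((\log x)^{1/2})$, every contribution is absorbed in $O_\Pi(x\exp(-c(\log x)^{1/2}))$, and summation over the $h(\mathfrak{q})$ characters yields \eqref{twotwo}. The main obstacle is precisely this last step: securing the Siegel-type bound uniformly in $\chi$ in the self-dual cases, which is exactly what constrains the hypotheses on $\Pi$ in the theorem.
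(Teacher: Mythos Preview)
Your overall architecture---orthogonality, Perron/explicit formula, zero-free region, then control of a possible exceptional zero---is exactly the paper's strategy (Theorem \ref{main} plus Section \ref{Section3}). The gap is in the exceptional-zero step, where two of your claims are not correct as stated.

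First, in case (c) you assert that $\Pi\otimes\chi$ is ``not self-dual by non-twist-equivalence.'' This is false in general: if, say, $\pi$ and $\pi'$ both have trivial central character then $\pi\boxtimes\pi'$ is self-dual regardless of twist-inequivalence. The paper does not argue via non-self-duality here; it invokes the theorem of Ramakrishnan--Wang (Theorem \ref{Siegelzero}(ii) and Corollary \ref{gl2}) which directly gives the classical zero-free region \emph{without} exceptional zero for $L(s,\pi\times\pi'\times\chi)$ under the non-dihedral, non-twist-equivalent hypothesis.

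Second, and more seriously, in the ``remaining self-dual instances'' you appeal to ``an ineffective Siegel-type bound $1-\beta_\chi\gg_{\Pi,\varepsilon}Q(\Pi\otimes\chi)^{-\varepsilon}$, available at degree $\leq 4$ from Hoffstein--Ramakrishnan-type work.'' No such bound is known for general self-dual $\GL_4$ $L$-functions. What Hoffstein--Ramakrishnan and Banks actually give is the \emph{non-existence} of exceptional zeros for cuspidal $\GL_2$ and $\GL_3$ (Theorem \ref{Siegelzero}(i)), which disposes of case (a) with $m\le 3$ outright. For case (b), the paper uses the factorization you already wrote down, $L(s,\Pi\otimes\chi)=L(s,\Ad(\pi)\otimes\chi)L(s,\chi)$: the $\GL_3$ cuspidal factor has no exceptional zero (Banks), so any $\beta_\chi$ must come from the degree-one factor $L(s,\chi)$, for which the classical Siegel bound (Theorem \ref{Grossen}) applies. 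You mention this factorization for the pole analysis but then abandon it precisely where it is essential.
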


\begin{remarks}
{(i) In the above theorem, $\delta(\Pi)$ is the order of $- \frac{L'}{L}(s, \Pi)$ at  $s=1$.

(ii)  In the case $F=\mathbb{Q}$,  the asymptotic formula \eqref{twotwo}
becomes
\begin{equation*}
\sideset{}{^*}\sum_{\substack { p\leq x\\ p\equiv a~\mymod{ {q}}}} (\log{p}) a_{\Pi}({p}) = \frac{\delta(\Pi)}{\phi(q)}x+O_{\Pi, \gamma} \left( x \exp \left( -c (\log{x})^{\frac{1}{2}} \right) \right)
\end{equation*}
for $q\le (\log x)^{\gamma}$. In particular, if $\pi$ is a non-dihedral cuspidal  representation of ${\rm GL}_2(\mathbb{A}_\mathbb{Q})$, we have, unconditionally, 
\begin{equation*}
\sideset{}{^*}\sum_{\substack { p\leq x\\ p\equiv a~\mymod{ {q}}}} (\log{p}) |a_{\pi}({p})|^2 = \frac{1}{\phi(q)}x+O_{\pi, \gamma} \left( x \exp \left( -c (\log{x})^{\frac{1}{2}} \right) \right),
\end{equation*}
for $q\le (\log x)^{\gamma}$. This removes the assumption of the GRC in \cite[Theorem 1]{PJ19}.

(iii) An examination of the proof of Theorem \ref{first} shows that, in accordance with \eqref{pnt},  it would be possible to remove the dependence in $\gamma$ of the implied constant in the error term of \eqref{twotwo}, by making the constant $c$ in the error term to be dependent in $\gamma$. We  prefer \eqref{twotwo} as it provides an error formula
independent of $\gamma$.

(iv) The main obstacles for proving a Siegel-Walfisz type result for general automorphic $L$-functions are the lack of information on the size of their coefficients and the absence of the Siegel-type bounds for their possible exceptional zeros. The current known bounds towards the GRC together with the non-existence of exceptional zeros for degrees two and three $L$-functions, Siegel-type bounds for degree one $L$-functions,  and the  theory of Rankin-Selberg $L$-functions provide us with the needed tools in proving such a theorem for certain automorphic $L$-functions.

}
\end{remarks}

{The proof of Theorem \ref{first} is done along the classical lines by studying the analytic properties of the Rankin-Selberg $L$-functions twisted by the characters of the ray class groups. It is known that if $\pi$ is a cuspidal representation of  $\GL_m(\Bbb{A}_F)$, then $\pi \otimes \psi$ is also a cuspidal representation of $\GL_m(\Bbb{A}_F)$ for any id\'{e}le class character $\psi$ of $F$.  Thus, we can define 
\begin{equation}
\label{twist0}
L(s, \pi\times \pi^\prime \times \psi):= L(s, (\pi\otimes \psi)\times \pi^\prime).
\end{equation}
Throughout the paper, we let $L(s, \pi\times \pi^\prime \times \chi)$ be the $L$- function \eqref{twist0} attached to $\pi$, $\pi^\prime$, and 
the id\'{e}le class character associated with the ray class character $\chi$.}
Theorem \ref{first} is, in fact, a consequence of a more general theorem which we state now.

\begin{theorem}
\label{main}
Let $\pi $ and $\pi^\prime$ be cuspidal representations of $\GL_m(\Bbb{A}_{F})$ and $\GL_{m'}(\Bbb{A}_{F})$, respectively.
Let $\chi$ denote a ray class character modulo ${\mathfrak{q}}$.
Assume the following hold:

\begin{enumerate}


\item[(i)] There is $\epsilon_{0}:=\epsilon_0(\pi, \pi^\prime)>0$ such that 
\begin{equation*}
\sideset{}{^*}\sum_{x< {\rm N} \mathfrak{n}\leq x+u}  \Lambda (\mathfrak{n}) |a_{\pi\times \pi'}(\mathfrak{n})|  \ll_{\pi, \pi^\prime} u\log{x}
\end{equation*}
for $x^{1-\epsilon_0} \leq u \leq x$.


\item[(ii)] The $L$-functions  $L(s, \pi\times \pi^\prime \times \chi)$
are holomorphic everywhere except possibly  
having a simple pole at $s=1$ for exactly one ray class character $\chi=\eta$ modulo $\mathfrak{q}$.

\item[(iii)] There is a positive constant $c_{\pi, \pi^\prime}$, depending only on $\pi$ and $\pi^\prime$, such that for any ray class character $\chi$ modulo $\mathfrak{q}$, 
the $L$-function $L(s, \pi\times \pi^\prime \times \chi)$ has either no zeros or possibly only one simple real zero $\beta_\chi:=\beta(\pi, \pi^\prime, \chi)$ in the region
\begin{equation}
\label{zfree}
\sigma \geq 1-\frac{c_{\pi, \pi^\prime}}{\log{\left( ({\rm N} \mathfrak{q})
(|t|+3)\right)} }.
\end{equation}
\end{enumerate}
Then  there exists  $c:=c(\pi, \pi^\prime)>0$ such that for any ideal $\mathfrak{q}$, with ${\rm N}{\mathfrak{q}}\leq \exp((\log{x})^{1/2})$, and any ideal $\mathfrak{a}$ relatively prime to $\mathfrak{q}$, we have
 \begin{align}\label{first-2}
 \begin{split}
\sideset{}{^*}\sum_{\substack {{\rm N} \mathfrak{n}\leq x\\ \mathfrak{n} \sim \mathfrak{a}~\mymod{ \mathfrak{q}}}} \Lambda(\mathfrak{n}) a_{\pi\times \pi^\prime}(\mathfrak{n})
& = \frac{\delta(\pi, \pi^\prime, \mathfrak{q}, \mathfrak{a})}{h(\mathfrak{q})}x
 -\frac{1}{h(\mathfrak{q})} \sum_{\chi~\mymod{ \mathfrak{q}}}  \overline{\chi}(\mathfrak{a})\frac{x^{\beta_\chi}}{\beta_\chi } \\
& +O_{\pi, \pi^\prime} \left( x \exp \left( -c (\log{x})^{\frac{1}{2}} \right) \right),
 \end{split}
 \end{align}
where  the term $\frac{x^{\beta_\chi}}{\beta_\chi } $ should be omitted if $\beta_\chi$ does not exist. 
 Here $\delta(\pi, {\pi^{\prime}}, \mathfrak{q}, \mathfrak{a})=\bar{ \eta}(\mathfrak{a}) \delta(\pi\times\pi^\prime \times \eta)$, where 
$\delta(\pi\times\pi^\prime \times \eta)$ is the order of $-\frac{L'}{L}(s, \pi \times\pi^\prime \times \eta)$ at the pole $s=1$ for the possible unique ray class character $\eta$ modulo $\mathfrak{q}$ described in (ii), and $\delta(\pi, {\pi^{\prime}}, \mathfrak{q}, \mathfrak{a})=0$ otherwise.

\noindent Moreover, under the additional condition to (iii):
\begin{enumerate}
\item[(iv)] If such $\beta_{\chi}$ exists, then for any $\epsilon>0$, there is a constant $\kappa (\epsilon, \pi, \pi^\prime)$, depending on $\epsilon, \pi,$ and $\pi'$, such that
\begin{equation}
\label{s-bound}
\beta_\chi \leq 1-\frac{\kappa(\epsilon, \pi, \pi^\prime)}{{\rm N} \mathfrak{q}^\epsilon}.
\end{equation}
\end{enumerate}
Then, for any $\gamma>0$, there exists  $c:=c(\pi, \pi^\prime)>0$ such that for any ideal $\mathfrak{q}$, with ${\rm N}{\mathfrak{q}}\leq (\log{x})^\gamma$, and any ideal $\mathfrak{a}$ relatively prime to $\mathfrak{q}$, we have
\begin{equation}
\label{second}
\sideset{}{^*}\sum_{\substack {{\rm N} \mathfrak{n}\leq x\\ \mathfrak{n} \sim \mathfrak{a}~\mymod{ \mathfrak{q}}}} \Lambda(\mathfrak{n}) a_{\pi\times \pi^\prime}(\mathfrak{n}) = \frac{\delta(\pi, \pi^\prime, \mathfrak{q}, \mathfrak{a})}{h(\mathfrak{q})}x+O_{\pi, \pi^\prime, \gamma} \left( x \exp \left( -c (\log{x})^{\frac{1}{2}} \right) \right), 
\end{equation}
where $\delta(\pi, {\pi^{\prime}}, \mathfrak{q}, \mathfrak{a})$ is as defined above.
\end{theorem}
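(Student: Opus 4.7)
The plan is to proceed along the classical lines of the Siegel--Walfisz derivation, with the Riemann zeta function replaced by the twisted Rankin--Selberg $L$-function $L(s,\pi\times\pi'\times\chi)$. First I would apply orthogonality of the ray class characters modulo $\mathfrak{q}$ to write
\begin{equation*}
\sideset{}{^*}\sum_{\substack{{\rm N}\mathfrak{n}\leq x\\ \mathfrak{n}\sim\mathfrak{a}~\mymod{\mathfrak{q}}}}\Lambda(\mathfrak{n})a_{\pi\times\pi'}(\mathfrak{n}) = \frac{1}{h(\mathfrak{q})}\sum_{\chi~\mymod{\mathfrak{q}}}\overline{\chi}(\mathfrak{a})\,\psi(x,\pi\times\pi'\times\chi),
\end{equation*}
where $\psi(x,\pi\times\pi'\times\chi) := \sideset{}{^*}\sum_{{\rm N}\mathfrak{n}\leq x}\Lambda(\mathfrak{n})a_{\pi\times\pi'}(\mathfrak{n})\chi(\mathfrak{n})$, reducing the task to an explicit formula for each twisted prime sum with good uniformity in $\chi$ and $\mathfrak{q}$.

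Next, to each $\psi(x,\pi\times\pi'\times\chi)$ I would apply a truncated Perron formula to the Dirichlet series $-\frac{L'}{L}(s,\pi\times\pi'\times\chi)$, obtaining
\begin{equation*}
\psi(x,\pi\times\pi'\times\chi) = \frac{1}{2\pi i}\int_{\kappa-iT}^{\kappa+iT}\left(-\frac{L'}{L}(s,\pi\times\pi'\times\chi)\right)\frac{x^s}{s}\,ds + E(x,T),
\end{equation*}
with $\kappa = 1+1/\log x$ and $T$ to be chosen. This is precisely where hypothesis (i) enters: partial summation together with the short-interval bound on $\sum\Lambda(\mathfrak{n})|a_{\pi\times\pi'}(\mathfrak{n})|$ controls both the tail and the ``$\mathfrak{n}$ near $x$'' contributions in the Perron error, permitting a choice of $T$ as a small power of $x$ for which $E(x,T) \ll x\exp(-c(\log x)^{1/2})$.

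Then I would shift the contour to $\Re(s) = 1 - c_1/\log({\rm N}\mathfrak{q}(|t|+3))$ inside the zero-free region of (iii). By hypothesis (ii), the only residue producing a main term is a simple pole at $s=1$, which occurs exactly for $\chi=\eta$ and contributes $\overline{\eta}(\mathfrak{a})\delta(\pi\times\pi'\times\eta)\,x/h(\mathfrak{q})$ after summing over $\chi$; each possible real exceptional zero $\beta_\chi$ in the zero-free region contributes $-x^{\beta_\chi}/\beta_\chi$. The shifted vertical integral and the horizontal connecting segments are estimated via standard Hadamard-type bounds for $L'/L(s,\pi\times\pi'\times\chi)$ on the boundary of the classical zero-free region, combined with polynomial-in-$t$ bounds on vertical strips from the functional equation and the analytic conductor of $L(s,\pi\times\pi'\times\chi)$. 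Since the zero-free region is of logarithmic width in ${\rm N}\mathfrak{q}(|t|+3)$, this produces the claimed saving of size $\exp(-c(\log x)^{1/2})$ uniformly for ${\rm N}\mathfrak{q}\leq\exp((\log x)^{1/2})$, yielding \eqref{first-2}.

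Finally, to pass from \eqref{first-2} to \eqref{second} under the additional Siegel-type bound (iv), I would fix $\gamma>0$ and $\epsilon$ with $\gamma\epsilon < 1/2$; then for ${\rm N}\mathfrak{q}\leq(\log x)^\gamma$ one has $1-\beta_\chi \geq \kappa(\epsilon,\pi,\pi')(\log x)^{-\gamma\epsilon}$, hence
\begin{equation*}
\frac{x^{\beta_\chi}}{\beta_\chi} \ll x\exp\bigl(-\kappa(\epsilon,\pi,\pi')(\log x)^{1-\gamma\epsilon}\bigr) \ll x\exp\bigl(-c(\log x)^{1/2}\bigr),
\end{equation*}
so the exceptional-zero contribution is absorbed into the error. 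The main technical difficulty is the uniform contour estimate: one must combine zero-counting for $L(s,\pi\times\pi'\times\chi)$ with bounds for $L'/L$ on the boundary of the zero-free region in such a way that, after summation over all $h(\mathfrak{q})$ ray class characters, the resulting error is still of size $O(x\exp(-c(\log x)^{1/2}))$, with constants independent of the character.
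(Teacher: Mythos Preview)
Your overall strategy---orthogonality of ray class characters, truncated Perron, then exploitation of the zero-free region (iii) and the Siegel bound (iv)---is exactly that of the paper. The one technical variation is in how the contour integral is handled: you propose shifting only to the curved edge $\sigma=1-c_1/\log(({\rm N}\mathfrak{q})(|t|+3))$ of the zero-free region and bounding $L'/L$ there via Hadamard-type estimates, whereas the paper pushes the contour far to the left (its Lemma~\ref{integral}) to obtain the truncated explicit formula
\[
\psi(x,\pi\times\pi'\times\chi)=\delta(\pi\times\pi'\times\chi)\,x-\sum_{\substack{0<|\Re(\rho)|\le 1\\|\Im(\rho)|\le T}}\frac{x^\rho}{\rho}+(\text{lower order}),
\]
and then bounds the sum over non-exceptional zeros using (iii) combined with the zero-counting bound $N(t,\pi\times\pi'\times\chi)\ll_{\pi,\pi'}\log(({\rm N}\mathfrak{q})(|t|+3))$. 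Both routes are classical and yield the same outcome; the explicit-formula route spares one the task of bounding $L'/L$ uniformly on a curved contour, at the cost of having to treat the companion zero $1-\beta_\chi$ separately (which the paper does in its estimate \eqref{S-zero}).

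There is, however, a genuine miscalibration in your choice of $T$. If $T=x^\alpha$ is a fixed small power of $x$, the Perron error $E(x,T)$ is indeed negligible, but the saving from the zero-free region collapses: at height $|t|\sim T$ the region has width only $c_{\pi,\pi'}/\log(({\rm N}\mathfrak{q})T)\asymp 1/\log x$, so the shifted integral (equivalently, the zero-sum in the explicit formula) is bounded merely by a quantity of size $x^{1-c/\log x}(\log x)^{O(1)}=x\,e^{-c/\alpha}(\log x)^{O(1)}$, a constant-factor saving rather than $x\exp(-c(\log x)^{1/2})$. The correct balance, and the one the paper makes, is $T=\exp((\log x)^{1/2})$: then $\log(({\rm N}\mathfrak{q})T)\le 2(\log x)^{1/2}$ for ${\rm N}\mathfrak{q}\le\exp((\log x)^{1/2})$, giving $x^{1-c/\log(({\rm N}\mathfrak{q})T)}\le x\exp(-\tfrac{c}{2}(\log x)^{1/2})$, while the Perron remainder $\ll x(\log x)/T^{1/2}$ has the same shape. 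With this single correction your argument goes through; your treatment of the exceptional-zero term under (iv) is essentially the paper's (which takes $\epsilon=1/(3\gamma)$).
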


For simplicity of referring to \eqref{zfree}, throughout the paper, 
 the region given by \eqref{zfree} is called the \emph{classical zero-free region} of $L(s, \pi\times \pi^\prime \times \chi)$. Also the bound for $\beta_\chi$ given in \eqref{s-bound} is called a \emph{Siegel-type} bound for the exceptional zero $\beta_\chi$.

In this paper, we shall also prove an estimate of correct order of magnitude towards \eqref{pnt-short} for automorphic $L$-functions. In \cite{Moto},  Motohashi proved such an estimate for the sum of Hecke-Maass eigenvalues $\tau_V(p)$, associated with an irreducible representation $V$ of ${\rm PSL}_2(\mathbb{R})$, with spectral data $\nu_V$, squared over primes in short intervals.
The following is \cite[Theorem 1]{Moto}.
\begin{theorem}[{Motohashi}]
\label{Moto-t} 
There exist constants $c_0, \theta_0>0$ such that uniformly for $(\log{x})^{-1/2} \leq \theta \leq \theta_0$, $|\nu_V|^{1/\theta}\leq x$, one has
$$\sum_{x-y \leq p \leq x} \tau_V^2(p)= \frac{y}{\log{x}}\left(1+O(e^{-c_0/\theta})\right),~~y=x^{1-\theta}.$$ 
\end{theorem}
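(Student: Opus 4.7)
The plan is to exploit the Rankin--Selberg $L$-function $L(s, V\times V)$ attached to the Maass form $V$. When $V$ is self-dual one has the factorization $L(s, V \times V) = \zeta(s)\,L(s,\Sym^2 V)$, so both factors admit analytic continuation and a classical zero-free region, and the product has a simple pole at $s=1$. Since $\tau_V(p)^2 = a_{V\times V}(p)$, the sum in question is, up to a negligible prime-power contribution, $\psi_{V\times V}(x)-\psi_{V\times V}(x-y)$, where $\psi_{V\times V}(x) := \sum_{n\leq x}\Lambda(n)a_{V\times V}(n)$.

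First I would discard the prime-power contribution $\sum_{k\geq 2}\sum_{p}(\log p)\tau_V(p^k)^2$, which is $O(\sqrt{x})$ uniformly thanks to current bounds toward the Ramanujan conjecture. Next, I would start from a smoothed Perron-type identity: for a smooth cut-off $\phi$ supported essentially in $[x-y,x]$ whose Mellin transform $\widetilde\phi(s)$ decays rapidly as $|t|\to\infty$ (with $s=\sigma+it$), write
\begin{equation*}
\sum_{n} \Lambda(n)\,a_{V\times V}(n)\,\phi(n)=\frac{1}{2\pi i}\int_{(\sigma_0)}\left(-\frac{L'}{L}(s,V\times V)\right)\widetilde\phi(s)\,ds,
\end{equation*}
and shift the contour past the pole at $s=1$ (producing a residue of size $\asymp y$) onto a vertical line $\sigma = 1 - c/\log((|\nu_V|+T)T)$ lying inside the classical zero-free region of $L(s, V\times V)$, trapping all zeros $\rho = \beta + i\gamma$ with $|\gamma|\leq T$.

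Each trapped zero contributes $\ll y\,x^{\beta - 1}/|\rho|$; combining the zero-free region with the standard zero-counting bound $N(T, V\times V) \ll T\log((|\nu_V|+T)^4)$ and balancing $T$ so that $\log T \asymp \theta\log x$ turns the zero-sum into $O(y\exp(-c_0/\theta))$. The hypothesis $|\nu_V|^{1/\theta}\leq x$ is exactly what guarantees that the analytic-conductor factor $\log(|\nu_V|+T)$ is absorbed into the same exponent rather than dominating it. Finally, the non-existence of a Landau--Siegel zero for $L(s,\Sym^2 V)$ (Hoffstein--Ramakrishnan, using positivity of Dirichlet coefficients) ensures that no off-line real zero spoils the main term.

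The principal obstacle is sustaining an error of size $y\,e^{-c_0/\theta}$ all the way down to $\theta \asymp (\log x)^{-1/2}$: in that regime a crude application of Perron's formula yields only the standard Siegel--Walfisz savings $\exp(-c\sqrt{\log x})$. The resolution is to choose a test function $\phi$ whose Mellin transform decays super-polynomially in $|t|$, so that zeros with $|\gamma|>T$ contribute a tail error commensurate with the main savings, and to optimize $T$ very carefully against the shape of the classical zero-free region. The explicit constraint $|\nu_V|^{1/\theta}\leq x$ is what aligns these two inputs and forces the claimed uniformity in the spectral parameter.
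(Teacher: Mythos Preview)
The paper cites this result from Motohashi rather than proving it from scratch, but immediately afterward names the three essential inputs---an explicit formula, a classical zero-free region, and a \emph{log-free zero-density estimate}---and then proves the generalization Theorem~\ref{main2} using exactly those hypotheses; the argument in Section~\ref{six} is the paper's version of Motohashi's proof. Your proposal correctly supplies the explicit formula, the factorization $L(s,V\times V)=\zeta(s)L(s,\Sym^2 V)$, the classical zero-free region, and the Hoffstein--Ramakrishnan exclusion of a Siegel zero. The gap is that in place of a log-free density estimate you invoke only the Riemann--von Mangoldt count $N(T,V\times V)\ll T\log((|\nu_V|+T)^4)$, and that is not sufficient.

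Concretely: bounding $\sum_{|\gamma|\le T} y\,x^{\beta-1}/|\rho|$ using only $\beta\le 1-c/\log(CT)$ and partial summation against $N(T)\ll T\log(CT)$ yields at best an error of order $y\,(\log(CT))^2\exp(-c'/\theta)$ once you set $\log T\asymp\theta\log x$. For $\theta$ fixed near $\theta_0$ the factor $(\log(CT))^2\asymp(\log x)^2$ is unbounded, so the error is not $O(y\,e^{-c_0/\theta})$; your argument fails precisely in the genuinely short-interval regime. The ``principal obstacle'' paragraph diagnoses the wrong difficulty: super-polynomial decay of $\widetilde\phi$ tames only the tail $|\gamma|>T$, whereas the real bottleneck is the possible proliferation of zeros with $|\gamma|\le T$ sitting near the edge of the zero-free region. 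The log-free estimate $N(\sigma,T)\ll T^{d(1-\sigma)}$ is exactly what caps that proliferation, and in Section~\ref{six} the integration in $\sigma$ against this bound (with $T=x^{4\theta}$ and $\theta\le 1/(10d)$ so that $T^{d}\le x^{1/2}$) is what converts the zero-sum into the clean $O(y\,e^{-c_0/\theta})$.
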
 
For results similar to the above in the context of automorphic $L$-functions, see \cite{AT} and \cite{LO-T19}. The main ingredients of the proof of such results are an explicit formula similar to the classical explicit formula for the prime counting function $\psi(x)$, a classical zero-free region, and a \emph{log-free} zero-density estimate. The possibility of obtaining such estimates using a log-free zero-density estimate was first noted by Moreno \cite{Mo}.

Our next result is inspired by Theorem \ref{Moto-t}.


\begin{theorem}
\label{third}
Let $\Pi$ be as described in Theorem \ref{first} and let $\gamma, \nu>0$.
Then there are positive constants $c_0:=c_0(\Pi)$, $c:=c(\Pi)$, and $\theta_0:=\theta_0(\Pi)$ such that, for $$(\log{x})^{-1/2} \leq \theta\leq \theta_0,$$
$y=x^{1-\theta}$,
any ideal $\mathfrak{q}$ with ${\rm N}{\mathfrak{q}}\leq (\log{x})^\gamma$, and any ideal $\mathfrak{a}$ relatively prime to $\mathfrak{q}$, we have
\begin{equation}
\label{toto}
\sideset{}{^*}\sum_{\substack {x-y<{\rm N} \mathfrak{p}\leq x\\ \mathfrak{p} \sim \mathfrak{a}~\mymod{ \mathfrak{q}}}} \Lambda(\mathfrak{p}) a_{\Pi}(\mathfrak{p})
 = y \left(\frac{\delta(\Pi)}{h(\mathfrak{q})}
 +O_{\Pi, \gamma, \nu} \left( \exp \left( -c (\log{x})^{1-\nu} \right)\right)
 +O_{\Pi} \left(e^{- {c_0}/{\theta}} \right) \right),
\end{equation}
where $\delta(\Pi)$ is as defined in Theorem \ref{first}.

\end{theorem}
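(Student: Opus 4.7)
The plan is to follow the classical short-interval blueprint of Moreno \cite{Mo} and Motohashi \cite{Moto}, replacing the Riemann zeta function by the twisted Rankin-Selberg $L$-functions that underlie each case of Theorem \ref{first}. First, by orthogonality of ray class characters modulo $\mathfrak{q}$,
\[
\sideset{}{^*}\sum_{\substack{x-y<\mathrm{N}\mathfrak{p}\leq x\\ \mathfrak{p}\sim\mathfrak{a}\,\mymod{\mathfrak{q}}}}\Lambda(\mathfrak{p})a_{\Pi}(\mathfrak{p}) = \frac{1}{h(\mathfrak{q})}\sum_{\chi\,\mymod{\mathfrak{q}}}\overline{\chi}(\mathfrak{a}) \sideset{}{^*}\sum_{x-y<\mathrm{N}\mathfrak{p}\leq x}\Lambda(\mathfrak{p})a_{\Pi}(\mathfrak{p})\chi(\mathfrak{p}).
\]
In each case (a)--(c) of Theorem \ref{first} I would write $a_{\Pi}=a_{\pi\times\pi'}$ for the obvious choice of $(\pi,\pi')$ (taking $\pi'=1$ in case (a)), so that the inner character sum is controlled by the analytic properties of $L(s,\pi\times\pi'\times\chi)$ defined via \eqref{twist0}. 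Passing from primes $\mathfrak{p}$ to prime powers $\mathfrak{n}$ and discarding ramified factors produces an admissible error using hypothesis~(i) of Theorem \ref{main}.

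Next, I would apply a truncated Perron/explicit formula to $-(L'/L)(s,\pi\times\pi'\times\chi)$, but now with truncation height $T=x/y=x^{\theta}$, so that each non-trivial zero $\rho=\beta+i\gamma$ with $|\gamma|\leq T$ contributes
\[
\Bigl|\frac{x^{\rho}-(x-y)^{\rho}}{\rho}\Bigr| \ll y\,x^{\beta-1}.
\]
This yields, for each $\chi$, a possible main term of size $y$ (present only when $\chi=\eta$ in the exceptional case of hypothesis (ii)) together with an oscillatory zero sum. Aggregating the main terms over $\chi$ reproduces the $\delta(\Pi)y/h(\mathfrak{q})$ appearing in \eqref{toto}, and the task is reduced to bounding
\[
\frac{1}{h(\mathfrak{q})}\sum_{\chi\,\mymod{\mathfrak{q}}}\,\sum_{\substack{\rho:\,L(\rho,\pi\times\pi'\times\chi)=0\\ |\Im\rho|\leq T}}\frac{x^{\rho}-(x-y)^{\rho}}{\rho}.
\]

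The core analytic input is a \emph{log-free} zero-density estimate of the shape
\[
\sum_{\chi\,\mymod{\mathfrak{q}}} N(\alpha,T,\pi\times\pi'\times\chi) \ll_{\pi,\pi'} \bigl(\mathrm{N}\mathfrak{q}\cdot T\bigr)^{A(1-\alpha)}
\]
for some $A=A(\pi,\pi')>0$. Combined with the classical zero-free region from hypothesis~(iii) and a standard Stieltjes/partial-summation computation (cf.\ \cite{Moto,AT,LO-T19}), this converts the double sum into an integral of $x^{-\alpha}$ against $(\mathrm{N}\mathfrak{q}T)^{A\alpha}$; with $\mathrm{N}\mathfrak{q}\leq(\log x)^{\gamma}$ and $T=x^{\theta}$ this integral is $\ll y\,e^{-c_{0}/\theta}$ for $\theta\leq\theta_{0}$ sufficiently small, producing the second error term. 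The possible exceptional real zero $\beta_{\chi}$ must be extracted separately and bounded via the Siegel-type inequality~(iv): choosing $\epsilon=\nu/\gamma$ gives
\[
y\,x^{\beta_{\chi}-1}\ll y\exp\!\bigl(-\kappa(\epsilon,\pi,\pi')(\log x)^{1-\gamma\epsilon}\bigr)\ll y\exp\!\bigl(-c(\log x)^{1-\nu}\bigr),
\]
which is the first error term in \eqref{toto}.

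The principal obstacle is securing the log-free zero-density estimate uniformly in the ray class character $\chi$ for \emph{every} automorphic representation $\Pi$ admissible in Theorem \ref{first} -- in particular, for the degree-four Rankin-Selberg products in cases (b) and (c). This demands combining the available analytic data (functional equations, convexity bounds, and the unconditional bounds toward GRC that already feed into hypothesis~(i)) with a Kowalski--Michel-style mollifier and large-sieve argument averaging over $\chi$ modulo $\mathfrak{q}$. Once such a density estimate is established for each of the $L$-functions in (a)--(c), the remaining short-interval bookkeeping is routine, and the three contributions -- main term, Siegel-exceptional term, and bulk-zeros term -- combine to yield \eqref{toto}.
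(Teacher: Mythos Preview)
Your outline is essentially the paper's approach: reduce to Theorem \ref{main2} via orthogonality, apply a truncated Perron/explicit formula, bound the bulk of the zero-sum by a log-free zero-density estimate to produce the $e^{-c_0/\theta}$ term, and handle the possible exceptional zero via the Siegel-type bound to produce the $\exp(-c(\log x)^{1-\nu})$ term. Two points deserve correction, however.

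First, what you call the ``principal obstacle'' is not one. You do \emph{not} need a Kowalski--Michel mollifier or any large-sieve average over $\chi$: the required log-free zero-density estimate is already available for a \emph{single} automorphic $L$-function, uniformly in its analytic conductor, by Lemke Oliver--Thorner \cite{LO-T19} (recorded as Theorem \ref{log-free} in the paper). The only thing to check is that $\Pi\otimes\chi$ is automorphic in each of the cases (a)--(c), and this is immediate from Theorem \ref{GJ}: in (a) $\Pi$ is cuspidal; in (c) $(\pi\otimes\chi)\boxtimes\pi'$ is cuspidal on $\GL_4$; in (b) one has $\pi\boxtimes\check{\pi}\simeq\Ad(\pi)\boxplus 1$, an isobaric sum of cuspidals. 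Then \eqref{twist} controls the conductor dependence on $\mathfrak{q}$, and Theorem \ref{log-free} gives $N(\sigma,T,\Pi\times\chi)\ll_{\Pi}((\mathrm{N}\mathfrak{q})T)^{d_{\Pi}(1-\sigma)}$ with no further work.

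Second, your truncation height $T=x/y=x^{\theta}$ is too small: with the Liu--Ye form of Perron used here (Lemma \ref{LYe} with $H=T^{1/2}$), the truncation error is $O(x\log x/T^{1/2})$, which for $T=x^{\theta}$ is $y\,x^{\theta/2}\log x$ and swamps the main term. The paper takes $T=x^{4\theta}$, giving an error $O(y\,x^{-\theta}\log x)$ that is absorbed into $y\,e^{-c_0/\theta}$ once $(\log x)^{-1/2}\le\theta$. With these two adjustments your sketch matches the paper's proof (Sections \ref{five}--\ref{six}) exactly.
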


The above theorem is also a consequence of the following more general assertion.

\begin{theorem}
\label{main2}
Let $\pi $ and $\pi^\prime$ be cuspidal representations of $\GL_m(\Bbb{A}_{F})$ and $\GL_{m'}(\Bbb{A}_{F})$, respectively.
Assume the following hold:

\begin{enumerate}


 \item[(i)] There is $\epsilon_{0}:=\epsilon_0(\pi, \pi^\prime)>0$ such that 
\begin{equation*}
\sideset{}{^*}\sum_{x< {\rm N} \mathfrak{n}\leq x+u}  \Lambda (\mathfrak{n}) |a_{\pi\times \pi'}(\mathfrak{n})|  \ll_{\pi, \pi^\prime} u\log{x}
\end{equation*}
for $x^{1-\epsilon_0} \leq u \leq x$.

\item[(ii)] The $L$-functions  $L(s, \pi\times \pi^\prime \times \chi)$ 
are holomorphic everywhere except possibly  
having a simple pole at $s=1$ for exactly one ray class character $\chi=\eta$ modulo $\mathfrak{q}$.

\item[(iii)] There is a positive constant $c_{\pi, \pi^\prime}$, depending only on $\pi$ and $\pi^\prime$, such that for any ray class character $\chi$  modulo $\mathfrak{q}$, 
the $L$-function $L(s, \pi\times \pi^\prime \times \chi)$ has either no zeros or possibly only one simple real zero $\beta_\chi:=\beta(\pi, \pi^\prime, \chi)$ in the region
$$\sigma \geq 1-\frac{c_{\pi, \pi^\prime}}{\log{\left( ({\rm N}\mathfrak{q})
(|t|+3)\right)} }.$$

\item[(iv)] There is a positive constant $d_{\pi, \pi^\prime}$ such that for $T\geq 1$ and $0\leq \sigma\leq 1$, we have
$$N(\sigma, T, \pi \times \pi^\prime \times \chi) \ll_{\pi, \pi^\prime} \left(({\rm N} \mathfrak{q})T\right)^{d_{\pi,\pi^\prime}(1-\sigma)},$$
where
$$
N(\sigma,T, \pi\times\pi^\prime\times \chi) =\#\{\rho=\Re(\rho)+i \Im(\rho) \mid  L(\rho,\pi \times \pi^\prime\times \chi)=0,~ \Re(\rho)\ge \sigma,~ |\Im(\rho)|\le T \}. 
$$
\end{enumerate}
Then there exists a positive constant $c_0:=c_0(\pi, \pi^\prime)$ such that, for $$(\log{x})^{-1/2} \leq \theta\leq \min\left\{\frac{1}{10d_{\pi, \pi^\prime}}, \frac{\epsilon_0}{4}\right\}$$ and $y=x^{1-\theta}$, we have

%

\begin{equation}
\label{fourfour}
\sideset{}{^*}\sum_{\substack {x-y<{\rm N} \mathfrak{n}\leq x\\ \mathfrak{n} \sim \mathfrak{a}~\mymod{ \mathfrak{q}}}} \Lambda(\mathfrak{n}) a_{\pi\times \pi^\prime}(\mathfrak{n}) = y \left(\frac{\delta(\pi, \pi^\prime, \mathfrak{q}, \mathfrak{a})}{h(\mathfrak{q})}+O\left( \frac{1}{h(\mathfrak{q})}     \sum_{\chi~\mymod{ \mathfrak{q}}}  x^{\beta_\chi -1} \right)+O_{\pi, \pi^\prime} \left(e^{- {c_0}/{\theta}} \right) \right),
\end{equation}
uniformly for all $\mathfrak{q}$, with ${\rm N} \mathfrak{q} \leq x^\theta$, where $\delta(\pi, \pi^\prime, \mathfrak{q}, \mathfrak{a})$ is as defined in Theorem \ref{main}.

\noindent Moreover, under the additional condition to (iii):
\begin{enumerate}
\item[(v)]
If  the possible exceptional zero $\beta_{\chi}$ of $L(s, \pi\times \pi^\prime \times \chi)$  exists, then for any $\epsilon>0$, there is a constant $\kappa (\epsilon, \pi, \pi^\prime)$, depending on $\epsilon, \pi,$ and $\pi'$, such that
$$\beta_\chi \leq 1-\frac{\kappa(\epsilon, \pi, \pi^\prime)}{{\rm N} \mathfrak{q}^\epsilon}.$$
\end{enumerate}
Then, for any $\gamma, \nu>0$, there exists  $c:=c(\pi, \pi^\prime)>0$ such that
\begin{equation}
\label{last-main-est}
\sideset{}{^*}\sum_{\substack {x-y<{\rm N} \mathfrak{n}\leq x\\ \mathfrak{n} \sim \mathfrak{a}~\mymod{ \mathfrak{q}}}} \Lambda(\mathfrak{n}) a_{\pi\times \pi^\prime}(\mathfrak{n})
 = y \left(\frac{\delta(\pi, \pi^\prime, \mathfrak{q}, \mathfrak{a})}{h(\mathfrak{q})}
 +O_{\pi, \pi', \gamma, \nu} \left( \exp \left( -c (\log{x})^{1-\nu} \right)\right)
 +O_{\pi, \pi'} \left(e^{- {c_0}/{\theta}} \right) \right),
\end{equation}
uniformly for all $\mathfrak{q}$, with $ {\rm N} \mathfrak{q} \leq (\log x )^\gamma$, where $\delta(\pi, \pi^\prime, \mathfrak{q}, \mathfrak{a})$ is as defined in Theorem \ref{main}.

\end{theorem}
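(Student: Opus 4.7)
The plan is to follow a standard Perron-plus-explicit-formula-plus-zero-density argument in parallel with the proof of Theorem \ref{main}. By orthogonality of ray class characters,
$$\sideset{}{^*}\sum_{\substack{x-y<{\rm N}\mathfrak{n}\le x\\ \mathfrak{n}\sim\mathfrak{a}\,\mymod{\mathfrak{q}}}}\!\!\Lambda(\mathfrak{n})a_{\pi\times\pi'}(\mathfrak{n}) \;=\; \frac{1}{h(\mathfrak{q})}\sum_{\chi\,\mymod{\mathfrak{q}}}\bar\chi(\mathfrak{a})\bigl(\psi^{*}(x;\chi)-\psi^{*}(x-y;\chi)\bigr),$$
with $\psi^{*}(x;\chi)=\sideset{}{^{*}}\sum_{{\rm N}\mathfrak{n}\le x}\Lambda(\mathfrak{n})a_{\pi\times\pi'}(\mathfrak{n})\chi(\mathfrak{n})$. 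For each $\chi$, a truncated explicit formula derived from the functional equation of $L(s,\pi\times\pi'\times\chi)$, combined with hypothesis (i) to control the Perron tails involving $|a_{\pi\times\pi'}|$, yields
$$\psi^{*}(x;\chi) \;=\; \delta(\pi\times\pi'\times\chi)\,x \;-\;\sum_{|\Im\rho|\le T}\frac{x^{\rho}}{\rho} \;+\; O_{\pi,\pi'}\!\left(\frac{x\bigl(\log(x\,{\rm N}\mathfrak{q})\bigr)^{2}}{T}\right),$$
the sum running over the non-trivial zeros of $L(s,\pi\times\pi'\times\chi)$. Subtracting the two values of $\psi^{*}$ and using the standard estimate $|x^{\rho}-(x-y)^{\rho}|\ll y\,x^{\beta-1}$ for $|\Im\rho|\le T$ reduces everything to the zero sum.

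Next, I would take the truncation $T=x^{\theta}$, so that the Perron error contributes $\ll y(\log x\,{\rm N}\mathfrak{q})^{2}$, which is absorbed into the stated error after averaging over $\chi$. Hypothesis (iii) guarantees that every zero with $|\Im\rho|\le T$ other than the possible $\beta_{\chi}$ satisfies
$$\beta \;\le\; 1-\frac{c_{\pi,\pi'}}{\log({\rm N}\mathfrak{q}\,T)} \;\le\; 1-\frac{c'}{\log x}.$$
Dyadically decomposing in $\sigma=\Re\rho$ and applying the log-free density bound (iv), a partial-summation argument gives
$$\frac{1}{h(\mathfrak{q})}\sum_{\chi\,\mymod{\mathfrak{q}}}\;\sum_{\substack{\rho\ne\beta_{\chi}\\ |\Im\rho|\le T}}\!y\,x^{\beta-1} \;\ll\; y(\log x)\!\int_{0}^{1-c'/\log x}\!\!({\rm N}\mathfrak{q}\,T)^{d_{\pi,\pi'}(1-\sigma)}\,x^{\sigma-1}\,d\sigma.$$
The constraint $\theta\le 1/(10d_{\pi,\pi'})$ together with ${\rm N}\mathfrak{q}\le x^{\theta}$ forces $({\rm N}\mathfrak{q}\,T)^{d_{\pi,\pi'}}\le x^{1/5}$, so the integrand decays geometrically from the upper endpoint and the integral is bounded by $\exp(-c_{0}/\theta)$ for a constant $c_{0}=c_{0}(\pi,\pi')>0$. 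Peeling off the at most one surviving zero $\beta_{\chi}$ per character yields the first error term in \eqref{fourfour} and produces the second, exceptional-zero error term, completing the first part.

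For the refinement \eqref{last-main-est}, I would use hypothesis (v): when ${\rm N}\mathfrak{q}\le(\log x)^{\gamma}$, choosing $\epsilon=\nu/(2\gamma)$ in \eqref{s-bound} gives
$$\beta_{\chi}\le 1-\kappa(\epsilon,\pi,\pi')(\log x)^{-\nu/2}, \qquad\text{hence}\qquad x^{\beta_{\chi}-1}\ll\exp\!\bigl(-c(\log x)^{1-\nu}\bigr),$$
which absorbs the exceptional-zero term in \eqref{fourfour} into the error in \eqref{last-main-est}.

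The main obstacle is executing the short-interval explicit formula with an error that is $o(y)$ rather than merely $o(x)$: this is exactly why hypothesis (i) and the comparatively large choice $T=x^{\theta}\asymp x/y$ are necessary, whereas the long-interval argument for Theorem \ref{main} can tolerate a much smaller $T$. Once those are in place the zero-density step is routine, but some care is required to verify that the exponents line up so that hypothesis (v) yields precisely an $\exp(-c(\log x)^{1-\nu})$ saving rather than a weaker one — this is managed by the freedom in choosing $\epsilon$ small in terms of $\gamma$ and $\nu$.
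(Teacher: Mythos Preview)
Your overall approach matches the paper's --- orthogonality, explicit formula, then the log-free density estimate to kill the zero-sum --- but two quantitative choices break the argument as written.

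First, the truncation $T=x^{\theta}$ is too small. With that choice your stated Perron/explicit-formula error $O\bigl(x(\log(x\,{\rm N}\mathfrak{q}))^{2}/T\bigr)$ equals $O\bigl(y(\log x)^{2}\bigr)$, which is \emph{larger} than the main term $y\,\delta/h(\mathfrak{q})$ and certainly not $O(ye^{-c_{0}/\theta})$; averaging over $\chi$ does nothing, since the bound is the same for each character. The paper instead takes $T=x^{4\theta}$ (and uses the Liu--Ye form of Perron with $H=T^{1/2}$, invoking hypothesis (i) at $u=x/T^{1/2}$), so that the explicit-formula error becomes $O(yx^{-\theta}\log x)$, which is genuinely $o(y)$ and can be absorbed into $ye^{-c_{0}/\theta}$ once $\theta\ge(\log x)^{-1/2}$. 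The constraint $\theta\le 1/(10d_{\pi,\pi'})$ in the statement is calibrated to this larger $T$: with $T=x^{4\theta}$ and ${\rm N}\mathfrak{q}\le x^{\theta}$ one gets $({\rm N}\mathfrak{q}\,T)^{d_{\pi,\pi'}}\le x^{1/2}$, exactly what the density integral needs.

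Second, in the density step you weaken the zero-free region to $\beta\le 1-c'/\log x$ with $c'$ a constant and then claim the integral is $\ll e^{-c_{0}/\theta}$. That does not follow: with upper limit $1-c'/\log x$ the integrand at the endpoint is only $O(1)$ (since $x^{-c'/\log x}=e^{-c'}$) and the integral is merely $O(1/\log x)$. The $e^{-c_{0}/\theta}$ saving comes precisely from \emph{retaining} the $\theta$-dependence: the correct upper limit is $1-\tilde c/\log({\rm N}\mathfrak{q}\,T)$, so that at the endpoint
\[
x^{\sigma-1}\le \exp\Bigl(-\tilde c\,\frac{\log x}{\log({\rm N}\mathfrak{q}\,T)}\Bigr)\le \exp\bigl(-\tilde c/(5\theta)\bigr),
\]
and the geometric decay of the integrand then gives the desired bound. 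The paper carries out exactly this computation leading to \eqref{gbar-bound}.

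Both fixes are straightforward once identified, and after them your sketch coincides with the paper's proof.
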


\begin{remark}
We note that by the work of Soundararajan and Thorner \cite[Theorem 2.4 and Corollary 2.6]{ST}, the condition (i) of Theorem \ref{main} and conditions (i) and (iv) of Theorem \ref{main2} hold, unconditionally, whenever $F=\Bbb{Q}$. Also, recently, Humphries and Thorner \cite[Theorem 2.4]{HT} showed that the classical zero-free region (the condition (iii) in Theorems \ref{main} and \ref{main2}) and the Siegel-type bound (the condition (iv) of Theorem \ref{main} and the condition (v) of Theorem \ref{main2}) are valid if $\pi'=\check{\pi}$, $(q, A_\pi)=1$,  and $F=\Bbb{Q}$. Thus, by Theorem \ref{main}, for any cuspidal representation $\pi$ of $\GL_m(\Bbb{A}_{\Bbb{Q}})$ and $\gamma>0$, there exists $c:=c(\pi)>0$ such that
\begin{equation*}
\sideset{}{^*}\sum_{\substack { p^k\leq x\\ p^k\equiv a~\mymod{ {q}}}} (\log{p}) |a_{\pi}({p^k})|^2 
= \frac{1}{\phi(q)}x+O_{\pi, \gamma} \left( x \exp \left( -c (\log{x})^{\frac{1}{2}} \right) \right)
\end{equation*}
for $q\le (\log x)^{\gamma}$ with $(q, A_\pi)=1$. Moreover, it follows from Theorem \ref{main2} that there are positive constants $c_0:=c_0(\pi)$, $c:=c(\pi)$, and $\theta_0:=\theta_0(\pi)$ such that, for $(\log{x})^{-1/2} \leq \theta\leq \theta_0$,  $y=x^{1-\theta}$, $\gamma, \nu>0$,  and any $q\leq (\log{x})^\gamma$ with $(q, A_\pi)=1$,  we have
\begin{equation*}
\sideset{}{^*}\sum_{\substack {x-y< p^k \leq x\\ p^k\equiv a~\mymod{ {q}} }} (\log{p}) |a_{\pi}({p^k})|^2 
 = y \left(\frac{1}{\phi(q)}
 +O_{\pi, \gamma, \nu} \left( \exp \left( -c (\log{x})^{1-\nu} \right)\right)
 +O_{\pi} \left(e^{- {c_0}/{\theta}} \right) \right).
\end{equation*}
In addition, by Proposition \ref{Wu-Ye}, for $1\leq m \leq 4$, the same estimates hold for the corresponding sums \emph{only} supported over primes.
\end{remark}

In the rest of the paper, we prove Theorems \ref{first} and \ref{third}. The structure of the paper is as follows. In Section \ref{two}, we start by reviewing some facts and results from the theory of automorphic forms that will be used in the proofs of our main assertions. In Section \ref{Section3} we show that Theorem \ref{first} is a consequence of Theorem \ref{main}  and then in Section \ref{four} we give a proof of Theorem \ref{main}.  Similarly we describe in Section \ref{five} that Theorem \ref{main2} implies Theorem \ref{third} and then in Section \ref{six} we prove Theorem \ref{main2}.
\begin{notation}
{Throughout the paper $F$ is a number field of degree $n_F$, $\mathfrak{p}$ is a prime ideal of $F$, ${\rm N}\mathfrak{q}$ is the norm of an ideal $\mathfrak{q}$ of $F$, $\pi$, $\pi^\prime$ are cuspidal representations, $A_\pi$ and $\mathfrak{q}(\pi)$ are respectively the conductor and the  analytic conductor of $\pi$, $L(s, \pi)$ is the automorphic $L$-function associated with $\pi$, $\pi \otimes \psi$ is the twist of $\pi$ by an id\'{e}le class character $\psi$, $\pi\boxplus \pi^\prime$ is the isobaric sum of two cuspidal representations, $\pi\boxtimes\pi^\prime$ is the automorphic representation associated to two cuspidal representations if it exists, $L(s, \pi \times \pi^\prime)$ is the Rankin-Selberg $L$-function associated with $\pi$ and $\pi^\prime$, $A_{\pi\times \pi^\prime}$ and $\mathfrak{q}(\pi\times \pi^\prime)$ are respectively  the conductor and the analytic conductor of $\pi\times \pi^\prime$, $\Lambda(\cdot)$ is the number field von Mangoldt function, 
$h(\mathfrak{q})$ is the number of ray classes modulo $\mathfrak{q}$,   $\chi$ and $\eta$ are ray class characters modulo $\mathfrak{q}$, and $\psi$ 
is an id\'{e}le class character of ${F}$.
We use the Landau big-$O$ and Vinogradov $\ll$ notations with their usual meanings. The dependence of the implied constants on the parameter $t$ is denoted by $O_t(\cdot)$ or $\ll_t$. 
Throughout the paper we have suppressed the dependence of the constants to the base field $F$.}
\end{notation}
\subsection*{Acknowledgement}
The authors would like to thank Jesse Thorner for the valuable comments and suggestions in an earlier version of this paper.

\section{Preliminaries}
\label{two}

In this section,  we review some results from the theory of automorphic representations which we will need in the proof of our theorems. Let $\pi$ be  a cuspidal representation of $\GL_m(\Bbb{A}_{F})$ with local parameters $\alpha_\pi(j, \mathfrak{p})$.
%
%
For cuspidal representations $\pi$ and $\pi^\prime$ and integer $k\geq 1$, we set $$a_{\pi \times \pi^\prime}({\mathfrak{p}}^k)=\sum_{i=1}^{m} \sum_{j=1}^{m^\prime} \alpha_{\pi \times \pi^\prime}(i, j, \mathfrak{p})^k.$$
Here, $\{\alpha_{\pi \times \pi^\prime}(i, j, \mathfrak{p}) \mid  1\leq i \leq m~{\rm and}~ 1\leq j\leq m^\prime\}$ is the collection of local parameters of $L(s, \pi \times \pi^\prime)$ at the prime  $\mathfrak{p}$. For $\mathfrak{p} \nmid (A_\pi, A_{\pi^\prime})$, $1\leq i \leq m$ and $1\leq j\leq  m^\prime$, we have 
$$\alpha_{\pi \times \pi^\prime}(i, j, \mathfrak{p})= \alpha_{\pi}(i, \mathfrak{p}) \alpha_{\pi^\prime}(j, \mathfrak{p}).$$
The best general known bound for $a_{\pi \times \pi^\prime} (\mathfrak{p}^k)$
is 
\begin{equation}
\label{bound-general}
|a_{\pi \times \pi^\prime} (\mathfrak{p}^k)| \leq m m^\prime ({\rm N}{\mathfrak{p}^k)}^{1-\frac{1}{m^2+1}-\frac{1}{(m^\prime)^2+1}}, 
\end{equation}
see \cite[Formula (4)]{B}.
We denote by $A_{\pi \times \pi^\prime}$  the  \emph{conductor} of $\pi \times \pi^\prime$. 
The \emph{analytic conductor} of $\pi \times \pi^\prime$ is $$\mathfrak{q}(\pi\times \pi^\prime) =A_{\pi \times \pi^\prime} \prod_{i=1}^{m}\prod_{j}^{m^\prime} \prod_{v\in S_\infty} (|\kappa_{\pi\times \pi^\prime}(i, j, v)| +3),$$ where $S_\infty$ is the collection of infinite places of $F$ and the $\kappa_{\pi\times \pi^\prime}(i, j, v)$'s are the parameters of $L(s, \pi \times \pi^\prime)$ at infinity. We set $\mathfrak{q}(\pi):= \mathfrak{q}(\pi \times 1)$. It is shown in \cite[Lemma A.2]{HB} that 
\begin{equation}
\label{Bushnell-2}
\mathfrak{q}(\pi \times \pi^\prime) \leq C_0^{m+m^\prime} \mathfrak{q}(\pi)^{m^\prime} \mathfrak{q}(\pi^\prime)^m,
\end{equation}
for an absolute constant $C_0>0$. The parameters at infinity  satisfy the bound
\begin{equation}
\label{infinite}
|\Re(\kappa_{\pi\times\pi^\prime}(i, j, v)|\leq 1-\frac{1}{m^2+1}-\frac{1}{(m^\prime)^2+1}
\end{equation}
analogous to \eqref{bound-general} (see \cite[Formula (6)]{B}). Also, from the functional equation of the ray class $L$-function $L(s, \chi)$, we know that the parameters at infinity of $\chi$ are either zero or one (see \cite[p. 129]{IK}). Thus, following an analysis of the parameters at infinity of Rankin-Selberg $L$-functions, as done in \cite[pp. 1119-1121]{HB},  we have that 
\begin{equation}
\label{infinity-twist}
\kappa_{\pi\times\pi^\prime\times \chi}(i,j, v)= \kappa_{\pi\times \pi^\prime}(i,j, v)+c,
\end{equation}
where $c$ belongs to a finite set depending only on $\pi$ and $\pi^\prime$.

The following estimate on the average size of $a_{\pi \times \pi^\prime} (\mathfrak{n})$ over a short interval will play an important role in the proofs of Theorems \ref{first} and \ref{third}.

\begin{proposition}
\label{Landau}
Let $\pi$ and $\pi^\prime$ be cuspidal  representations of ${\rm GL}_m (\mathbb{A}_F)$ and ${\rm GL}_{m^\prime} (\mathbb{A}_F)$, respectively. 
Assume that there is $\epsilon_{\pi, \pi^\prime}>0$ such that 
\begin{equation}
\label{primepowers'}
\sum_{r\geq 2}\sideset{}{^*}\sum_{ ~{\rm N} \mathfrak{p}^r\leq x}  \Lambda (\mathfrak{p}^r) |a_{\pi\times \pi'}(\mathfrak{p}^r)|  \ll_{\pi, \pi^\prime} x^{1-\epsilon_{\pi, \pi^\prime}}.
\end{equation}
Suppose that $x^{1-{\epsilon}} \leq u\leq x$,  where  
$$0<\epsilon <\min\left\{ \frac{1}{n_F\max\{m, m^\prime\}^2+1}, 
 \epsilon_{\pi, \pi^\prime} \right\}.$$
Then 
$$\sideset{}{^*}\sum_{x< {\rm N} \mathfrak{n}\leq x+u}  \Lambda (\mathfrak{n}) |a_{\pi\times \pi'}(\mathfrak{n})|  \ll_{\pi, \pi^\prime, \epsilon} u \log{x}.
$$
\end{proposition}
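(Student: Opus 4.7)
The approach is to split the sum by primes versus higher prime powers, dispose of the latter via the hypothesis \eqref{primepowers'}, and then reduce the prime part to a pair of Rankin--Selberg sums through the Cauchy--Schwarz inequality.

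Specifically, I would write
\[
\sideset{}{^*}\sum_{x<\mathrm{N}\mathfrak{n}\leq x+u}\Lambda(\mathfrak{n})|a_{\pi\times\pi'}(\mathfrak{n})| = \Sigma_{\mathrm{pr}} + \Sigma_{\mathrm{pp}},
\]
where $\Sigma_{\mathrm{pp}}$ collects the contribution of $\mathfrak{p}^r$ with $r\ge 2$. Hypothesis \eqref{primepowers'} yields $\Sigma_{\mathrm{pp}}\ll x^{1-\epsilon_{\pi,\pi'}}$, which is $o(u\log x)$ because $u\ge x^{1-\epsilon}$ with $\epsilon<\epsilon_{\pi,\pi'}$. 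For $\Sigma_{\mathrm{pr}}$ I would exploit the factorization $a_{\pi\times\pi'}(\mathfrak{p}) = a_\pi(\mathfrak{p})\,a_{\pi'}(\mathfrak{p})$ at primes $\mathfrak{p}\nmid (A_\pi,A_{\pi'})$ (the finitely many primes ramified for exactly one of $\pi,\pi'$ contribute negligibly and can be absorbed) and apply the Cauchy--Schwarz inequality:
\[
\Sigma_{\mathrm{pr}} \ll \Bigl(\sideset{}{^*}\sum_{x<\mathrm{N}\mathfrak{p}\leq x+u}\Lambda(\mathfrak{p})|a_\pi(\mathfrak{p})|^2\Bigr)^{1/2}\Bigl(\sideset{}{^*}\sum_{x<\mathrm{N}\mathfrak{p}\leq x+u}\Lambda(\mathfrak{p})|a_{\pi'}(\mathfrak{p})|^2\Bigr)^{1/2}.
\]
At unramified primes one has $|a_\pi(\mathfrak{p})|^2 = a_{\pi\times\check\pi}(\mathfrak{p})$, so each factor is dominated by the Rankin--Selberg sum $\sideset{}{^*}\sum_{x<\mathrm{N}\mathfrak{n}\le x+u}\Lambda(\mathfrak{n})\,a_{\pi\times\check\pi}(\mathfrak{n})$ modulo a prime-power discrepancy, which is again controlled by \eqref{primepowers'} applied to $\pi\times\check\pi$ (and analogously to $\pi'\times\check{\pi}'$).

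The crux of the proof is then the short-interval bound
\[
\sideset{}{^*}\sum_{x<\mathrm{N}\mathfrak{n}\leq x+u}\Lambda(\mathfrak{n})\,a_{\pi\times\check\pi}(\mathfrak{n}) \ll u,
\]
valid for $u\ge x^{1-\epsilon}$ with $\epsilon<1/(n_F m^2+1)$. Since $L(s,\pi\times\check\pi)$ has non-negative Dirichlet coefficients (Rankin--Selberg positivity), a simple pole at $s=1$, and the classical zero-free region, the desired estimate follows from Perron's formula combined with a log-free zero-density estimate for this Rankin--Selberg $L$-function (passing to $\mathbb{Q}$ by base change when convenient); the degree $n_F m^2$ over $\mathbb{Q}$ dictates the admissible exponent $1/(n_F m^2+1)$, and taking the worse of the two Cauchy--Schwarz factors yields the stated condition $\epsilon<1/(n_F\max\{m,m'\}^2+1)$. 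Combining the pieces gives $\Sigma_{\mathrm{pr}}\ll u$, whence the total sum is $\ll u\ll u\log x$, as required. The principal obstacle is precisely this short-interval PNT for $L(s,\pi\times\check\pi)$: one has to marshal an unconditional zero-free region together with a sufficiently strong log-free zero-density estimate for Rankin--Selberg $L$-functions over the number field $F$, with quantitative exponents compatible with the stated range of $\epsilon$.
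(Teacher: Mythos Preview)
Your decomposition into $\Sigma_{\mathrm{pr}}+\Sigma_{\mathrm{pp}}$, the disposal of $\Sigma_{\mathrm{pp}}$ via \eqref{primepowers'}, and the Cauchy--Schwarz reduction of $\Sigma_{\mathrm{pr}}$ to Rankin--Selberg sums for $\pi\times\check\pi$ and $\pi'\times\check{\pi}'$ all match the paper. The divergence is at the step you call the crux, and there your proposal has a genuine gap.

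The paper does \emph{not} invoke zero-free regions, Perron's formula, or zero-density estimates. Instead it passes from the log-derivative coefficients to the $L$-function coefficients $b_{\pi\times\check\pi}(\mathfrak{n})$ themselves (defined by $L(s,\pi\times\check\pi)=\sum_{\mathfrak{n}} b_{\pi\times\check\pi}(\mathfrak{n})\,\mathrm{N}\mathfrak{n}^{-s}$). These are non-negative by Rankin--Selberg positivity and satisfy $b_{\pi\times\check\pi}(\mathfrak{p})=|a_\pi(\mathfrak{p})|^2$ at unramified primes, so the prime sum embeds into $\sum_{x<\mathrm{N}\mathfrak{n}\le x+u}b_{\pi\times\check\pi}(\mathfrak{n})$. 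A classical Landau-type estimate (cited from Michel) gives
\[
\sum_{\mathrm{N}\mathfrak{n}\le x}b_{\pi\times\check\pi}(\mathfrak{n})=c_\pi x+O_{\pi,\lambda}\bigl(x^{\frac{n_Fm^2-1}{n_Fm^2+1}+\lambda}\bigr),
\]
using only analytic continuation, the functional equation, and convexity---no information about zeros whatsoever. Differencing at $x$ and $x+u$ yields $\sum_{x<\mathrm{N}\mathfrak{n}\le x+u}b_{\pi\times\check\pi}(\mathfrak{n})\ll u$ exactly when $u\ge x^{1-\epsilon}$ with $\epsilon<1/(n_Fm^2+1)$.

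Your route via log-free zero-density is both much heavier and does not produce the stated exponent. The available estimates for $L(s,\pi\times\check\pi)$ give $N(\sigma,T)\ll T^{D(1-\sigma)}$ with $D$ on the order of $c_1(n_Fm^2)^2$ or worse (cf.\ Theorem~\ref{log-free}), so the admissible $\epsilon$ coming out of a short-interval PNT argument would be far smaller than $1/(n_Fm^2+1)$. That exponent is the Landau/convexity exponent, not a zero-density one; you identified the right number but attributed it to the wrong mechanism. The paper's choice of $b$-coefficients plus Landau's lemma sidesteps your ``principal obstacle'' entirely with an elementary tool.
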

\begin{proof}
Let $b_{\pi \times \pi^\prime}(\mathfrak{n})$ be the coefficients coming from the formal identity 
$$\sum_{ \mathfrak{n}\neq \mathfrak{0}}^{} \frac{b_{\pi \times \pi^\prime}(\mathfrak{n})}{{\rm N} \mathfrak{n}^s}
=\prod_{\mathfrak{p}} \prod_{i=1}^{m} \prod_{j=1}^{m^\prime} \left(1- \frac{\alpha_{\pi \times \pi^\prime}(i, j, \mathfrak{p})}{{\rm N} \mathfrak{p}^s}  \right)^{-1}.
$$
From the number field analogue of \cite[Eq. (1.10)]{Mi06}, we have
\begin{equation}
\label{L-equation}
\sum_{{\rm N} \mathfrak{n}\leq x }  b_{\pi\times \check{\pi}}(\mathfrak{n})
 = c_{\pi} x+O_{\pi, \lambda} \left( x^{\frac{n_F m^2-1}{n_F m^2+1} +\lambda} \right),
\end{equation}
for some $c_{\pi}>0$ and any $\lambda>0$.
We note that by \cite[Lemma a]{HR}, each $b_{\pi\times \check{\pi}}(\mathfrak{n})$ is non-negative, also $b_{\pi \times \pi^\prime} (\mathfrak{p})= a_{\pi}(\mathfrak{p})a_{ \pi'}(\mathfrak{p}) $ for $\mathfrak{p}\nmid (A_\pi, A_\pi^\prime)$. 
Thus, by employing \eqref{primepowers'}, the Cauchy-Schwarz inequality, and \eqref{L-equation}, we have
\begin{align*}
\sideset{}{^*}\sum_{x< {\rm N} \mathfrak{p}^m\leq x+u}  (\log{{\rm N} \mathfrak{p}}) |a_{\pi\times \pi'}(\mathfrak{p}^m) |
 &\ll (\log{x})\sideset{}{^*}\sum_{x< {\rm N} \mathfrak{p}\leq x+u }  |a_{\pi}(\mathfrak{p})a_{ \pi'}(\mathfrak{p})| + x^{1-\epsilon_{\pi, \pi^\prime}}\\
&\ll (\log{x}) \left(  \sum_{x< {\rm N} \mathfrak{n}\leq x+u }  b_{\pi\times \check{\pi}}(\mathfrak{n})\right)^{\frac{1}{2}} \left( \sum_{x<{\rm N} \mathfrak{n}\leq x+u }  b_{\pi'\times \check{\pi}'}(\mathfrak{n})\right)^{\frac{1}{2}}+x^{1-\epsilon_{\pi, \pi^\prime}}\\
&\ll  (\log{x}) u^{\frac{1}{2}} u^{\frac{1}{2}}+x^{1-\epsilon_{\pi, \pi^\prime}}\\
&\ll_{\pi,\pi', \epsilon} u \log{x}, 
\end{align*}
as long as $x^{1-{\epsilon}}\leq u \leq x$.
\end{proof}

We next note that the truth of \eqref{primepowers'} is known for some small $m$ and $m^\prime$. The following is a number field adaptation of \cite[Lemma 3.1]{WY07} (combined with the  Cauchy-Schwarz inequality).

\begin{proposition}
\label{Wu-Ye}
The assertion \eqref{primepowers'} holds for $m, m^\prime \in \{1, 2, 3, 4 \}$.
\end{proposition}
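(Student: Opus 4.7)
The plan is to combine the Cauchy--Schwarz inequality with a number-field adaptation of the single-representation estimate \cite[Lemma~3.1]{WY07}. At every prime $\mathfrak{p}\nmid(A_\pi,A_{\pi'})$ we have $a_{\pi\times\pi'}(\mathfrak{p}^r)=a_{\pi}(\mathfrak{p}^r)\,a_{\pi'}(\mathfrak{p}^r)$, so Cauchy--Schwarz yields
\[
\sum_{r\ge 2}\sideset{}{^*}\sum_{{\rm N}\mathfrak{p}^r\le x}\Lambda(\mathfrak{p}^r)\,|a_{\pi\times\pi'}(\mathfrak{p}^r)|
\le \Bigl(\sum_{r\ge 2}\sideset{}{^*}\sum_{{\rm N}\mathfrak{p}^r\le x}\Lambda(\mathfrak{p}^r)\,|a_{\pi}(\mathfrak{p}^r)|^{2}\Bigr)^{\!1/2}\Bigl(\sum_{r\ge 2}\sideset{}{^*}\sum_{{\rm N}\mathfrak{p}^r\le x}\Lambda(\mathfrak{p}^r)\,|a_{\pi'}(\mathfrak{p}^r)|^{2}\Bigr)^{\!1/2}.
\]
Consequently, it suffices to establish the \emph{diagonal} estimate
\[
\sum_{r\ge 2}\sideset{}{^*}\sum_{{\rm N}\mathfrak{p}^r\le x}\Lambda(\mathfrak{p}^r)\,|a_{\pi}(\mathfrak{p}^r)|^{2}\ll_{\pi} x^{1-\delta}
\]
for some $\delta=\delta(m)>0$, for each cuspidal $\pi$ of $\GL_m(\Bbb{A}_F)$ with $m\in\{1,2,3,4\}$.

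For the diagonal estimate, I would exploit the identity $|a_\pi(\mathfrak{p}^r)|^{2}=a_{\pi\times\check\pi}(\mathfrak{p}^r)\ge 0$ at unramified $\mathfrak{p}$, so that the sum in question is the prime-power part of the coefficient sum of $-L'/L(s,\pi\times\check\pi)$. The case $m=1$ is immediate from $|a_\pi(\mathfrak{p}^r)|\le 1$, yielding $O(x^{1/2})$. For $m\in\{2,3,4\}$, I would combine three ingredients: the known bounds toward the Ramanujan conjecture $|\alpha_\pi(j,\mathfrak{p})|\le {\rm N}\mathfrak{p}^{\theta_m}$ with $\theta_m<1/2$ (Kim--Sarnak for $m=2$; Kim--Shahidi and Luo--Rudnick--Sarnak for $m=3,4$); the automorphy of the symmetric square and symmetric fourth power lifts of Gelbart--Jacquet and Kim--Shahidi, which yields an isobaric decomposition of $L(s,\pi\times\check\pi)$ as a product of cuspidal $L$-functions of small degree; and the Molteni-type averaged estimate \eqref{L-equation} applied to each constituent factor. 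A standard partial summation then converts these average estimates, together with the positivity of $b_{\pi\times\check\pi}(\mathfrak{n})$, into the desired power-saving bound on the prime-power tail.

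The main obstacle lies in the cases $m=3,4$: here the pointwise Satake bound alone would give an $r=2$ contribution of order $x^{1/2+2\theta_m}$, which exceeds $x$ because $\theta_m>1/4$. Overcoming this requires using the averaged estimate \eqref{L-equation} on each cuspidal constituent of $\pi\times\check\pi$ obtained from the available functorial lifts, rather than relying on the pointwise Ramanujan-type bounds. The restriction $m,m'\in\{1,2,3,4\}$ reflects precisely the current reach of functoriality.
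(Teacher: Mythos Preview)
Your Cauchy--Schwarz reduction to the diagonal case is correct and is exactly what the paper does. The gap is in your treatment of the diagonal case for $m=3,4$. You propose to use ``the averaged estimate \eqref{L-equation} on each cuspidal constituent of $\pi\times\check\pi$ obtained from the available functorial lifts,'' but for a general cuspidal $\pi$ on $\GL_3(\Bbb{A}_F)$ or $\GL_4(\Bbb{A}_F)$ no isobaric decomposition of $L(s,\pi\times\check\pi)$ is known. The Gelbart--Jacquet and Kim--Shahidi results you invoke are functorial lifts \emph{from} $\GL_2$; they say nothing about $\pi\times\check\pi$ when $\pi$ itself lives on $\GL_3$ or $\GL_4$. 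In particular, your final sentence---that the restriction $m,m'\le 4$ ``reflects precisely the current reach of functoriality''---is a misdiagnosis: functoriality of $\pi\boxtimes\check\pi$ is known only for $m\le 2$.

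The Wu--Ye argument that the paper cites uses no such decomposition. It works directly with the Rankin--Selberg $L$-function $L(s,\pi\times\check\pi)$ (whose analytic properties and non-negative Dirichlet coefficients are available for all $m$), the Luo--Rudnick--Sarnak bound $|\alpha_\pi(j,\mathfrak{p})|\le {\rm N}\mathfrak{p}^{1/2-1/(m^2+1)}$, and the unitarity of the central character (which forces $\prod_j|\alpha_\pi(j,\mathfrak{p})|=1$). One bounds $|a_\pi(\mathfrak{p}^r)|^2\le m\sum_j|\alpha_\pi(j,\mathfrak{p})|^{2r}$, peels off all but a few factors of $|\alpha_\pi(j,\mathfrak{p})|$ via the LRS bound, and controls the remaining factors on average using the Rankin--Selberg estimate. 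The restriction $m\le 4$ comes from the numerics of this interplay, not from any functorial input. Your outline would need to be reworked along these lines to close the gap for $m=3,4$.
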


We continue with an important theorem on the automorphy of $L(s, \pi \times \pi^\prime)$ for ${\rm GL}_2$ representations.

\begin{theorem}
\label{GJ}
%
%
\noindent (i) Let $\pi$ and $\pi^\prime$ be cuspidal representations of $\GL_2(\mathbb{A}_F)$. Then there is an automorphic representation $\pi\boxtimes \pi^\prime$ of $\GL_4(\mathbb{A}_F)$ for which $L(s, \pi\boxtimes \pi^\prime)= L(s, \pi \times \pi^\prime)$. In addition, if $\pi$ and $\pi^\prime$ are non-dihedral, then $\pi\boxtimes \pi^\prime$ is cuspidal whenever  $\pi$ and $\pi^\prime$ are not twist-equivalent.

(ii) Let $\pi$ be a non-dihedral cuspidal representation of $\GL_2(\mathbb{A}_F)$.
Then there is a cuspidal  representation $\Ad(\pi)$ of $\GL_3(\mathbb{A}_F)$ such that
$$L(s, \pi\times\check{\pi}\times \psi)=L(s, \Ad(\pi) \otimes \psi) L(s, \psi)$$
for any id\'{e}le class character $\psi$ of $F$.
%

\end{theorem}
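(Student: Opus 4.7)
The plan is to invoke the two deep automorphy theorems that these statements essentially restate, and to derive the factorization in (ii) from a standard isobaric identity at the level of local parameters.

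For (i), I would cite Ramakrishnan's theorem \cite{R}: the existence of the isobaric automorphic representation $\pi \boxtimes \pi'$ on $\GL_4(\mathbb{A}_F)$ with $L(s, \pi \boxtimes \pi') = L(s, \pi \times \pi')$ is established via the Cogdell--Piatetski-Shapiro converse theorem for $\GL_4$; the analytic input consists of sufficient holomorphy, boundedness in vertical strips, and functional equations for the twisted Rankin--Selberg $L$-functions $L(s, \pi \times \pi' \times \chi)$, which come from Jacquet--Piatetski-Shapiro--Shalika theory. For the cuspidality claim under the non-dihedral, non-twist-equivalent hypotheses, one argues by contradiction: any non-cuspidal isobaric decomposition of $\pi \boxtimes \pi'$ would force a factorization of $L(s, \pi \times \pi')$ into lower-degree $L$-functions, and inspecting the resulting poles of further Rankin--Selberg convolutions forces either a non-trivial self-twist of $\pi$ (i.e.\ dihedrality) or a twist-equivalence between $\pi$ and $\pi'$, both excluded by hypothesis.

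For (ii), I would deploy the Gelbart--Jacquet adjoint lift together with an elementary local-parameter identity. Gelbart and Jacquet prove, again via a converse theorem (this time on $\GL_3$), the existence of the cuspidal representation $\Ad(\pi) = \Sym^2(\pi) \otimes \omega_\pi^{-1}$ on $\GL_3(\mathbb{A}_F)$, whose Satake parameters at unramified $\mathfrak{p}$ are $\{\alpha_\pi(1,\mathfrak{p})/\alpha_\pi(2,\mathfrak{p}),\, 1,\, \alpha_\pi(2,\mathfrak{p})/\alpha_\pi(1,\mathfrak{p})\}$; the non-dihedral assumption ensures cuspidality. Applying the decomposition $V \otimes V^\vee = \mathfrak{sl}(V) \oplus 1$ for $V = \mathbb{C}^2$ at each unramified place yields the isobaric identity $\pi \boxtimes \check\pi \simeq \Ad(\pi) \boxplus 1$. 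Twisting both sides by the id\`ele class character $\psi$ and taking $L$-functions gives
$$L(s, \pi \times \check\pi \times \psi) = L(s, \Ad(\pi) \otimes \psi)\, L(s, \psi),$$
since isobaric sums are compatible with character twists and their $L$-functions factor as products.

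The only substantial obstacle is that both the Ramakrishnan and Gelbart--Jacquet theorems rest on the heavy Cogdell--Piatetski-Shapiro converse-theorem machinery, whose analytic hypotheses must be checked for sufficiently many twists. Granting this, the rest is matching Satake parameters and bookkeeping on isobaric sums. Since the present paper uses Theorem \ref{GJ} as a black box for the downstream analysis of $L(s, \pi \times \pi' \times \chi)$, citing \cite{R} for (i) together with Gelbart and Jacquet for (ii), and recording the parameter identity above, is the natural and economical route.
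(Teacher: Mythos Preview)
Your proposal is correct and matches the paper's approach exactly: the paper's proof simply cites Theorem M of \cite{R} for part (i) and \cite[Theorem 9.3]{GJ} for part (ii), treating both as black boxes. Your additional commentary on the converse-theorem machinery and the isobaric identity $\pi\boxtimes\check\pi\simeq\Ad(\pi)\boxplus 1$ is accurate and helpful context, but goes beyond what the paper records.
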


\begin{proof}
Part (i) is contained in Theorem M of \cite{R}. 
Part (ii) is a consequence of \cite[Theorem 9.3]{GJ}.
\end{proof}

We next state several results on the existence of the classical zero-free region for $L(s, \pi\times \pi^\prime)$.
The following theorem is Theorem A.1 of \cite{HB}.

\begin{theorem}
\label{zero-free}
Let $\pi$ and $\pi^\prime$ be cuspidal representations of ${\rm GL}_m(\mathbb{A}_F)$ and $ {\rm GL}_{m^\prime}(\mathbb{A}_F)$, respectively. Assume as usual that both $\pi$ and $\pi^\prime$ are normalized such that their central characters are trivial on the diagonally embedded copy of the positive reals. Assume that $\pi^\prime$ is self-dual.  Then there is an effective absolute constant $c>0$ such that $L(s, \pi\times \pi^\prime)$ is not vanishing for all $s=\sigma+it \in \mathbb{C}$ satisfying
$$\sigma\geq 1-\frac{c}{(m+m^\prime)^3 \log\left(\mathfrak{q}(\pi) \mathfrak{q}(\pi^\prime) (|t|+3)^{mn_F} \right)},$$
with the possible exception of one real zero whenever $\pi$ is also self-dual.
\end{theorem}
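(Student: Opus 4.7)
The proof follows the classical de la Vall\'ee Poussin method, adapted to the Rankin--Selberg setting via an isobaric auxiliary construction. Suppose toward a contradiction that $\rho = \beta + it$ is a zero of $L(s, \pi \times \pi')$ with $\beta$ close to $1$. Since $\pi'$ is self-dual, the isobaric automorphic representation
$$
\Pi := 1 \boxplus \pi' \boxplus \pi' \boxplus \bigl(\pi \otimes |\cdot|^{it}\bigr) \boxplus \bigl(\check{\pi} \otimes |\cdot|^{-it}\bigr)
$$
on $\GL_{1 + 2m' + 2m}(\mathbb{A}_{F})$ is self-dual, so $L(s, \Pi \times \check{\Pi})$ is of Rankin--Selberg square type $L(s, \Sigma \times \check{\Sigma})$ and consequently has non-negative Dirichlet coefficients on prime powers in its logarithmic derivative (see \cite[Lemma a]{HR}).

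A direct enumeration of the $25$ Rankin--Selberg factors in $L(s, \Pi \times \check{\Pi}) = \prod_{j,k} L(s, \Pi_j \times \check{\Pi}_k)$ produces at $s = 1$ a pole of order $1 + 4 + 1 + 1 = 7$: one from $\zeta_{F}(s)$, four from $L(s, \pi' \times \pi')^{4}$ (since $\pi'$ appears with multiplicity two and is self-dual), and one each from the two copies of $L(s, \pi \times \check{\pi})$ attached to the pairs $(\pi \otimes |\cdot|^{it}, \check{\pi} \otimes |\cdot|^{-it})$ and its reverse. At $s = \beta$ one counts at least $8$ zeros, namely four copies of $L(s + it, \pi \times \pi')$ (from the pairs $(\pi \otimes |\cdot|^{it}, \pi')$ and $(\pi', \check{\pi} \otimes |\cdot|^{-it})$, each appearing twice thanks to the multiplicity of $\pi'$), together with four copies of $L(s - it, \check{\pi} \times \pi')$; the latter vanishes at $s = \beta$ because $\overline{\rho} = \beta - it$ is a zero of $L(s, \check{\pi} \times \pi')$ by complex conjugation of the Dirichlet series combined with the self-duality of $\pi'$.

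Combining the Hadamard product representation of $L(s, \Pi \times \check{\Pi})$ with the standard $O(\log \mathfrak{q}(\Pi \times \check{\Pi}))$ estimate for its entire part, and using the non-negativity of $-L'/L(\sigma, \Pi \times \check{\Pi})$ for $\sigma > 1$, one discards all zeros other than those at $s = \beta$ (whose contribution is of the correct sign to preserve the inequality) to obtain
$$
0 \;\le\; \frac{7}{\sigma - 1} \;-\; \frac{8}{\sigma - \beta} \;+\; O\bigl(\log \mathfrak{q}(\Pi \times \check{\Pi})\bigr).
$$
Setting $\sigma = 1 + \lambda(1 - \beta)$ with an optimally chosen $\lambda > 0$ yields $1 - \beta \gg 1/\log \mathfrak{q}(\Pi \times \check{\Pi})$. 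Finally, the conductor estimate \eqref{Bushnell-2} combined with the archimedean-parameter shift \eqref{infinity-twist} shows that the twists $|\cdot|^{\pm it}$ inflate the analytic conductor by a factor of $(|t|+3)^{m n_F}$, and routine bookkeeping on the degrees recovers the denominator $(m+m')^3 \log\bigl(\mathfrak{q}(\pi)\,\mathfrak{q}(\pi')\,(|t|+3)^{m n_F}\bigr)$ asserted in the theorem.

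The principal obstacle is the potential exceptional real zero. If $t = 0$ and $\pi$ is also self-dual, then both $\pi \otimes |\cdot|^{it}$ and $\check{\pi} \otimes |\cdot|^{-it}$ collapse to $\pi$, so $\Pi$ degenerates into $1 \boxplus \pi' \boxplus \pi' \boxplus \pi \boxplus \pi$. The pole order at $s = 1$ rises to $1 + 4 + 4 = 9$, while the zero count at $s = \beta$ remains $8$, so the positivity inequality becomes vacuous. More generally, in this self-dual regime one is forced to consider weightings $a \cdot 1 \boxplus b \cdot \pi' \boxplus 2c \cdot \pi$, and the algebraic identity $a^2 + b^2 + 4c^2 - 4bc = a^2 + (b - 2c)^2 \ge 0$ rules out the decisive inequality $4bc > a^2 + b^2 + 4c^2$. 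This structural obstruction accounts for the single exceptional simple real zero permitted by the theorem whenever $\pi$ is also self-dual.
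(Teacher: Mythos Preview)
The paper does not itself prove this theorem; it simply cites \cite[Appendix A]{HB} (Brumley's appendix to Humphries' paper). Your proposal supplies an actual sketch, and the method you use --- positivity of the logarithmic derivative of an auxiliary Rankin--Selberg square $L(s,\Pi\times\check{\Pi})$ for a self-dual isobaric $\Pi$, in the tradition of Hoffstein--Ramakrishnan and Goldfeld--Hoffstein--Lieman --- is indeed the method employed in that appendix. So your approach is aligned with the cited source, and the pole/zero count ($7$ versus $8$, degenerating to $9$ versus $8$ precisely when $t=0$ and $\pi$ is self-dual) together with the quadratic-form obstruction $a^2+(b-2c)^2\ge 0$ is correct.

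Two minor points are worth flagging. First, your isobaric sum is more elaborate than necessary: the simpler choice $\Pi=\pi'\boxplus(\pi\otimes|\cdot|^{it})\boxplus(\check{\pi}\otimes|\cdot|^{-it})$ already gives $4$ zeros against $3$ poles in the generic case, and the inclusion of the extra $1\boxplus\pi'$ only adds poles without adding relevant zeros (indeed your own quadratic-form analysis shows $a=0$ is optimal). Second, the phrase ``routine bookkeeping on the degrees recovers the denominator $(m+m')^3$'' hides the step where the exact power of $(m+m')$ is extracted from $\sum_{j,k}\log\mathfrak{q}(\Pi_j\times\check{\Pi}_k)$ via \eqref{Bushnell-2}; this is where Brumley's appendix does real work, and a quick count along your lines seems to give a smaller power, so one should either carry this out carefully or be content with \emph{some} polynomial dependence on $m+m'$. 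You also silently assume the pole count is exactly $7$, which fails in degenerate cases (e.g.\ $m'=1$ with $\pi'$ trivial), though these can be handled separately.
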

\begin{proof}
The proof is given in \cite[Appendix A]{HB}.
\end{proof}

\begin{corollary}
\label{pi}
If $\pi$ is a cuspidal representation of ${\rm GL}_m(\mathbb{A}_F)$, then $L(s, \pi\times \chi)$ satisfies a classical zero-free region for any given ray class character $\chi$ modulo $\mathfrak{q}$. Moreover, if $\pi\otimes\chi$ is not self-dual, then $L(s, \pi \times \chi)$ admits no exceptional zero.
\end{corollary}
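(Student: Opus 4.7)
The plan is to reduce Corollary \ref{pi} to Theorem \ref{zero-free} applied to the pair consisting of $\pi\otimes\chi$ and the trivial representation $1$ of $\GL_1(\mathbb{A}_F)$. Identifying $\chi$ with its associated id\`ele class character $\psi$, the twist $\pi\otimes\psi$ is a cuspidal representation of $\GL_m(\mathbb{A}_F)$ (as recalled in the discussion preceding \eqref{twist0}), and by that convention
$$L(s,\pi\times\chi)=L(s,(\pi\otimes\psi)\times 1)=L(s,\pi\otimes\psi).$$
Since the trivial character $1$ is self-dual, Theorem \ref{zero-free} is directly applicable to the pair $(\pi\otimes\chi,1)$ and furnishes a zero-free region of the form
$$\sigma\ge 1-\frac{c}{(m+1)^{3}\log\bigl(\mathfrak{q}(\pi\otimes\chi)(|t|+3)^{m n_F}\bigr)},$$
with at most one exceptional real zero, and that exceptional zero arises only when $\pi\otimes\chi$ is itself self-dual.

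To match the normalized shape of \eqref{zfree}, I would next bound $\mathfrak{q}(\pi\otimes\chi)$ in terms of ${\rm N}\mathfrak{q}$. Using \eqref{Bushnell-2} with the pair $(\pi,\chi)$, together with the fact that the archimedean parameters of a ray class character are $0$ or $1$, so that $\mathfrak{q}(\chi)\ll_F {\rm N}\mathfrak{q}$, one obtains
$$\mathfrak{q}(\pi\otimes\chi)=\mathfrak{q}(\pi\times\chi)\le C_{0}^{m+1}\,\mathfrak{q}(\pi)\,\mathfrak{q}(\chi)^{m}\ll_{\pi}({\rm N}\mathfrak{q})^{m}.$$
Substituting this estimate absorbs the factor $\mathfrak{q}(\pi)$, the bounded archimedean contribution, and the exponents $m$ and $m n_{F}$ into a single constant $c_{\pi}>0$, yielding
$$\sigma\ge 1-\frac{c_{\pi}}{\log\bigl(({\rm N}\mathfrak{q})(|t|+3)\bigr)},$$
which is the classical zero-free region in the sense of \eqref{zfree}. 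The exceptional real zero is inherited directly from Theorem \ref{zero-free} and survives precisely when $\pi\otimes\chi$ is self-dual, giving the second assertion.

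The approach is essentially bookkeeping with analytic conductors; the genuine analytic input is already packaged in Theorem \ref{zero-free}. The only mildly delicate step is verifying the estimate $\mathfrak{q}(\chi)\ll_F {\rm N}\mathfrak{q}$ with a constant depending only on $F$, but this follows from the standard divisibility $A_{\chi}\mid\mathfrak{q}$ combined with the constraint on the archimedean parameters of id\`ele class characters noted in the paragraph preceding \eqref{infinity-twist}.
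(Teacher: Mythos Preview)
Your approach is essentially identical to the paper's: apply Theorem \ref{zero-free} to the pair $(\pi\otimes\chi,1)$, then use \eqref{Bushnell-2} to bound $\mathfrak{q}(\pi\otimes\chi)\ll_\pi({\rm N}\mathfrak{q})^m$ and absorb constants into the shape of \eqref{zfree}. The one point the paper makes explicit that you gloss over is the verification that $\pi\otimes\chi$ satisfies the normalization hypothesis of Theorem \ref{zero-free}: since $\omega_{\pi\otimes\chi}=\omega_\pi\chi^{m}$ and a ray class character $\chi$ is of finite order (hence trivial on the diagonally embedded positive reals), the twist $\pi\otimes\chi$ inherits the normalization from $\pi$; you should state this, as Theorem \ref{zero-free} requires it.
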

\begin{proof}
Recall that $\pi$ is normalized such that its central character $\omega_\pi$ is trivial on the diagonally embedded copy of positive reals. Note that $\pi\otimes \chi$ is a cuspidal representation with the central character $\omega_{\pi \otimes \chi}$, where
\begin{equation}
\label{central-twist}
\omega_{\pi \otimes \chi} = \omega_\pi \chi^{m}.
\end{equation}
{Now, since $\chi$ is trivial on the diagonally embedded copy of positive reals}, then  \eqref{central-twist} shows that $\pi \otimes \chi$ is normalized.
Thus, by replacing $\pi$ in Theorem \ref{zero-free} with $\pi\otimes\chi$ and $\pi^\prime$ with $1$ and noting that, by \eqref{Bushnell-2}, 
\begin{equation}
\label{twist}
\mathfrak{q}(\pi \times \chi) \ll_\pi  {\rm N}(\mathfrak{q})^m,
\end{equation} 
we will have the desired result.
\end{proof}
\begin{remark}
We note that the $L$-functions $L(s, \pi\otimes\chi)$, $L(s, \pi\times\chi)$, and $L(s, \pi\times 1\times \chi)$ are the same. We shall use this fact throughout our discussion.
\end{remark}

We also need a result on the classical zero-free region for $L(s, \pi)$ when $\pi$ is not necessarily normalized.

\begin{proposition}
\label{zero-free2}
Let $\pi$ be a cuspidal representation (not necessarily normalized) of ${\rm GL}_m(\mathbb{A}_F)$. Assume, further, that $L(s, \pi \times \pi)$ is entire if $\pi \not\simeq \check{\pi}$. Then there is an effective absolute constant $c>0$ such that $L(s, \pi)$ is non-vanishing for all $s=\sigma+it\in \mathbb{C}$ satisfying 
$$\sigma \geq 1- \frac{c}{(m+1)^3 \log(\mathfrak{q}(\pi) (|t|+3)^{m n_F})},$$
with the possible exception of one real zero whenever $\pi$ is self-dual. 
\end{proposition}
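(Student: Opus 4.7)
The plan is to reduce Proposition \ref{zero-free2} to Theorem \ref{zero-free} applied to the pair $(\pi, 1)$, after first normalizing $\pi$ by a suitable twist. Any unitary id\`ele class character of $F$ decomposes uniquely as $\omega_0 |\cdot|^{is}$ with $\omega_0$ trivial on the diagonally embedded positive reals and $s \in \mathbb{R}$, so write $\omega_\pi = \omega_0 |\cdot|^{is_0}$ and set
$$
\tilde\pi := \pi \otimes |\det|^{-is_0/m}.
$$
By \eqref{central-twist} the central character of $\tilde\pi$ equals $\omega_\pi \cdot |\cdot|^{-is_0} = \omega_0$, so $\tilde\pi$ is a normalized cuspidal representation of $\GL_m(\mathbb{A}_F)$. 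From the Euler product one has $L(s,\tilde\pi) = L(s - is_0/m,\pi)$, so a zero of $L(s,\pi)$ at $\sigma + it$ corresponds exactly to a zero of $L(s,\tilde\pi)$ at $\sigma + i(t + s_0/m)$.

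Next I would align the exceptional-zero dichotomy with that of Theorem \ref{zero-free}. The condition $\tilde\pi \simeq \check{\tilde\pi}$ is equivalent to $\pi \simeq \check\pi \otimes |\det|^{2is_0/m}$, which would force $L(s, \pi \times \pi)$ to have a pole on the line $\Re(s) = 1$. Under the hypothesis that $L(s, \pi \times \pi)$ is entire whenever $\pi \not\simeq \check\pi$, such a pole is excluded, so $\tilde\pi$ can only be self-dual when $\pi$ is; and if $\pi \simeq \check\pi$, then $\omega_\pi^2 = 1$ is automatically trivial on the positive reals, forcing $s_0 = 0$ and $\tilde\pi = \pi$. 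Hence $\tilde\pi$ is self-dual precisely when $\pi$ is. Applying Theorem \ref{zero-free} to $\tilde\pi$ with $\pi' = 1$ (the trivial character, which is normalized, cuspidal on $\GL_1$, and self-dual) then gives an effective absolute constant $c_0 > 0$ such that $L(s,\tilde\pi) = L(s,\tilde\pi \times 1)$ is non-vanishing in the region
$$
\sigma \geq 1 - \frac{c_0}{(m+1)^3 \log\bigl(\mathfrak{q}(\tilde\pi)\mathfrak{q}(1)(|t|+3)^{mn_F}\bigr)},
$$
with at most one real exceptional zero precisely when $\pi$ is self-dual (in which case $s_0 = 0$, so the zero remains real when viewed as a zero of $L(s,\pi)$).

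The final step is to translate this region back to $\pi$ via $s \mapsto s + is_0/m$, which requires comparing $\mathfrak{q}(\tilde\pi)(|t + s_0/m|+3)^{mn_F}$ with $\mathfrak{q}(\pi)(|t|+3)^{mn_F}$. The twist $|\det|^{-is_0/m}$ is unramified at every finite place, so $A_{\tilde\pi} = A_\pi$, while at each archimedean place the Langlands parameters shift by $O(|s_0|)$. Since $s_0$ can be read off the archimedean components of $\omega_\pi = \det(\pi)$, one has $|s_0| \ll_{m,F} \sum_{v,j}|\kappa_\pi(j,v)| \ll_{m,F} \mathfrak{q}(\pi)$, and therefore both $\log\mathfrak{q}(\tilde\pi)$ and $mn_F\log(|s_0/m|+3)$ are bounded by constant multiples of $\log\bigl(\mathfrak{q}(\pi)(|t|+3)^{mn_F}\bigr)$. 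Absorbing these $F$- and $m$-dependent losses (the $F$-dependence being already suppressed by the paper's convention) into a smaller effective constant converts the zero-free region for $L(s,\tilde\pi)$ into the one stated in the proposition. The main obstacle is precisely this conductor comparison: the twist may move the archimedean parameters considerably, and one must argue that the resulting logarithmic cost is controlled solely by $\log\bigl(\mathfrak{q}(\pi)(|t|+3)^{mn_F}\bigr)$ and can be absorbed into the constant.
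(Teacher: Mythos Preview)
Your route differs from the paper's. The paper does not normalize $\pi$ by a twist and then invoke Theorem~\ref{zero-free} as a black box; it simply reruns the argument of \cite[Theorem~A.1]{HB} with $\pi'=1$, noting that the only analytic inputs that proof requires are that $L(s,\pi\times\check\pi)$ has a simple pole at $s=1$ and is holomorphic elsewhere, and that $L(s,\pi\times\pi)$ is entire when $\pi\not\simeq\check\pi$. Both facts hold for the given (possibly non-normalized) $\pi$ directly, so no twisting and no conductor bookkeeping are needed.

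Your self-duality dichotomy is handled correctly, and in the self-dual case (where $s_0=0$) your reduction is lossless. The obstacle you flag at the end, however, is real and is not overcome by the crude bound $|s_0|\ll_{m,F}\mathfrak{q}(\pi)$. Over $F=\mathbb{Q}$, take $\pi$ with archimedean parameters $(iN,0,\dots,0)$; then $s_0\asymp N$, the twist spreads a shift of size $N/m$ across all $m$ parameters, and one finds $\log\mathfrak{q}(\tilde\pi)\asymp m\log N$ against $\log\mathfrak{q}(\pi)\asymp\log N$. At $t=0$ (so $|t'|\asymp N/m$) the ratio $\log\bigl(\mathfrak{q}(\tilde\pi)(|t'|+3)^{m}\bigr)\big/\log\bigl(\mathfrak{q}(\pi)(|t|+3)^{m}\bigr)$ is $\asymp m$, so the constant you extract is of the shape $c_0/m$ rather than absolute. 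The underlying reason is that $\mathfrak{q}(\pi)(|t|+3)^{mn_F}$ is only an \emph{upper bound} for the genuinely twist-invariant quantity $A_\pi\prod_{j,v}(|it+\kappa_\pi(j,v)|+3)$; to exploit that invariance one must open up the proof of Theorem~\ref{zero-free}, which is exactly what the paper does.
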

\begin{proof}
The proof is the same as the proof of Theorem A.1 in \cite{HB}, when $\pi^\prime=1$. The main facts used in the proof are that $L(s, \pi\times\pi)$ is entire if $\pi \not\simeq \check{\pi}$ and $L(s, \pi\times \check{\pi})$ has a simple pole at $s=1$ and it is holomorphic everywhere else.
See also \cite[Theorem 5.10]{IK}.
\end{proof}

\begin{corollary}
\label{self-dual2}
Let $\pi$ be a self-dual cuspidal representation (not necessarily normalized) of ${\rm GL}_m(\mathbb{A}_F)$. Then $L(s, \pi\times\chi)$ satisfies a classical zero-free region for any given ray class character $\chi$  modulo $\mathfrak{q}$.
\end{corollary}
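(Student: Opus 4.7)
The plan is to apply Proposition \ref{zero-free2} to the twisted representation $\pi\otimes\chi$ together with the identity $L(s,\pi\times\chi)=L(s,\pi\otimes\chi)$. As in the derivation of \eqref{twist} in the proof of Corollary \ref{pi}, the bound \eqref{Bushnell-2} immediately gives $\mathfrak{q}(\pi\otimes\chi)\ll_\pi ({\rm N}\mathfrak{q})^m$, which after invoking Proposition \ref{zero-free2} absorbs the analytic-conductor dependence into constants depending only on $\pi$.

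The substantive step is to verify the hypothesis of Proposition \ref{zero-free2}: namely, if $\pi\otimes\chi\not\simeq\check{\pi\otimes\chi}$, then $L\bigl(s,(\pi\otimes\chi)\times(\pi\otimes\chi)\bigr)$ must be entire. Rewriting this $L$-function as $L(s,\pi\times\pi\times\chi^2)$ via \eqref{twist0} and using $\check\pi\simeq\pi$, the standard pole structure of Rankin--Selberg $L$-functions recalled in Section \ref{S1} shows that a pole can occur only when $\pi\otimes\chi^2\simeq\pi\otimes|\cdot|^{-i\tau}$ for some $\tau\in\Bbb{R}$. Self-duality further forces $\omega_\pi^2=1$, so $\omega_\pi$ is of order dividing $2$ and in particular trivial on the connected diagonal copy of $\Bbb{R}^{>0}$; combined with the triviality of $\chi$ on the same subgroup, the relation \eqref{central-twist} shows that both $\pi$ and $\pi\otimes\chi^2$ are normalized in the sense of Section \ref{S1}. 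Comparing central characters across the hypothetical isomorphism then forces $|\cdot|^{-im\tau}$ to be trivial on $\Bbb{R}^{>0}$, whence $\tau=0$. A pole would therefore require $\pi\otimes\chi^2\simeq\pi$, equivalently $\pi\otimes\chi\simeq\pi\otimes\bar\chi\simeq\check{\pi\otimes\chi}$, contradicting our assumption. Hence the $L$-function is entire.

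With the hypothesis secured, Proposition \ref{zero-free2} delivers the classical zero-free region for $L(s,\pi\otimes\chi)=L(s,\pi\times\chi)$, and the conductor bound above rewrites the resulting region in the form \eqref{zfree}, completing the proof. I expect the only genuine obstacle to be the pole-verification step just described: one must use self-duality of $\pi$ together with the normalization it induces to rule out potential off-axis poles at $s=1+i\tau$ of the relevant Rankin--Selberg $L$-function. Everything else reduces to a direct invocation of Proposition \ref{zero-free2} combined with \eqref{Bushnell-2}, essentially mirroring the structure of the proof of Corollary \ref{pi}.
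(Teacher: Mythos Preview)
Your proof is correct and follows essentially the same route as the paper's: apply Proposition~\ref{zero-free2} to $\pi\otimes\chi$, verify in the non-self-dual case that $L(s,(\pi\otimes\chi)\times(\pi\otimes\chi))$ is entire by ruling out a twist-equivalence $\pi\otimes\chi^2\simeq\pi\otimes|\cdot|^{i\tau}$, and conclude via the conductor bound~\eqref{twist}. The only cosmetic difference is that the paper deduces $\tau=0$ by directly cancelling the unitary character $\omega_\pi$ to obtain $\chi^{2m}=|\cdot|^{im\tau}$ and then invoking the finite order of $\chi$, whereas you first observe that self-duality forces $\omega_\pi^2=1$ to argue via normalization; both arguments are valid and equivalent.
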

\begin{proof}
We consider two cases.

Case 1: Assume that $\pi\otimes \chi$ is self-dual. Then 
by Proposition \ref{zero-free2} a classical zero-free region is furnished for $L(s, \pi \times \chi)$. 

Case 2: Assume that $\pi\otimes \chi$ is not self-dual. We claim that in such case $\pi\otimes \chi \not\simeq {(\pi \otimes \chi)}^{\vee} \otimes  | \cdot |^{i\tau}$ for any $\tau \in \mathbb{R}$. Suppose, on the contrary, that
$\pi\otimes \chi \simeq {(\pi \otimes \chi)}^{\vee} \otimes  |\cdot|^{i\tau}$ for some $\tau \in \mathbb{R}$. As $\pi$ is self-dual, we conclude that
$$\pi\otimes \chi^2 \simeq \pi \otimes |\cdot|^{i\tau}.$$
From here, by employing \eqref{central-twist}, we have
$$\omega_\pi  \chi^{2m}=\omega_\pi |\cdot|^{im\tau}.$$
As $\pi$ is unitary, the last identity implies that 
$$\chi^{2m}= |\cdot|^{im\tau}.$$
Now, since $\chi$ is of finite order, we conclude that $\tau=0$ and thus $\pi\otimes \chi$ is self-dual, a contradiction. 

Thus, $\pi\otimes \chi \not\simeq {(\pi \otimes \chi)}^{\vee} \otimes  |\cdot|^{i\tau}$ for any $\tau \in \mathbb{R}$, which implies the
holomorphy of $L(s, (\pi \otimes \chi) \times (\pi\otimes \chi))$ everywhere. 
Therefore, by Proposition \ref{zero-free2} and \eqref{twist}, $L(s, \pi \times \chi)$ has a classical zero-free region. 
\end{proof}

The following result shows that, for certain ${\rm GL}_2$ representations, we can dispensed with the self-duality condition of $\pi^\prime$ in Theorem \ref{zero-free}. 
\begin{theorem}
\label{zero-free3}
Let $\pi$ and $\pi^\prime$ be non-dihedral cuspidal representations of ${\rm GL}_2(\mathbb{A}_F)$. Assume that $\pi^\prime$ is not twist-equivalent to $\pi$.  Then there is an effective absolute constant $c>0$ such that $L(s, \pi \times \pi^\prime)$ has no zero in the region 
 $$\sigma \geq 1- \frac{c}{\log(A_{\pi \times \pi^\prime} (|t|+2+\lambda)^{{4} n_F})},$$
where
$\lambda$ is the maximum of the absolute value of the infinite parameters of $\pi$ and $\pi^\prime$.
\end{theorem}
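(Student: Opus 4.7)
The plan is to reduce Theorem \ref{zero-free3} to the classical zero-free region of Theorem \ref{zero-free} via Ramakrishnan's functorial lifting. Under the hypotheses that $\pi$ and $\pi^\prime$ are non-dihedral cuspidal representations of $\GL_2(\mathbb{A}_F)$ that are not twist-equivalent, Theorem \ref{GJ}(i) produces a cuspidal automorphic representation $\Pi := \pi\boxtimes \pi^\prime$ of $\GL_4(\mathbb{A}_F)$ with $L(s,\Pi) = L(s, \pi\times\pi^\prime)$. The problem therefore becomes the standard question of establishing a classical zero-free region for the standard $L$-function of the single $\GL_4$ cuspidal representation $\Pi$.

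Next, I would apply Theorem \ref{zero-free} to the pair $(\Pi, 1)$, where $1$ denotes the trivial (hence self-dual) representation of $\GL_1(\mathbb{A}_F)$. The self-duality requirement on the second factor is automatically met, so the theorem furnishes an effective absolute constant $c>0$ such that $L(s,\Pi) = L(s,\Pi \times 1)$ has no zero in
\[
\sigma \geq 1-\frac{c}{(4+1)^3\,\log\!\bigl(\mathfrak{q}(\Pi)\,\mathfrak{q}(1)\,(|t|+3)^{4n_F}\bigr)},
\]
with the possible exception of one real zero whenever $\Pi$ is self-dual. Unpacking the analytic conductor via its definition together with the bound $|\Re(\kappa_{\pi\times\pi^\prime}(i,j,v))|\leq \lambda$ gives $\mathfrak{q}(\Pi) = \mathfrak{q}(\pi\times\pi^\prime)\ll A_{\pi\times\pi^\prime}(\lambda+3)^{4n_F}$; absorbing absolute constants into $c$ and combining with the $t$-factor produces the region claimed in Theorem \ref{zero-free3}.

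The main obstacle is to rule out the possible exceptional real zero when $\Pi$ is self-dual. A comparison of the unramified Satake parameters of $\pi\boxtimes\pi^\prime$ and $\check\pi\boxtimes\check\pi^\prime$ via strong multiplicity one for $\GL_4$ shows that $\Pi\simeq\check\Pi$ forces one of two alternatives: either $\pi^\prime\simeq\check\pi$, or both $\pi$ and $\pi^\prime$ are themselves self-dual. In the first alternative, the identity $\check\pi\simeq\pi\otimes\omega_\pi^{-1}$ for $\GL_2$ makes $\pi$ and $\pi^\prime$ twist-equivalent and contradicts the hypothesis. In the remaining case, Theorem \ref{GJ}(ii) and the local tensor-product decomposition yield the factorisation
\[
L(s, \Pi\times\Pi) = L(s,\Ad(\pi)\boxtimes\Ad(\pi^\prime))\,L(s,\Ad(\pi))\,L(s,\Ad(\pi^\prime))\,\zeta_F(s),
\]
and the key observation that non-twist-equivalence of $\pi$ and $\pi^\prime$ forces $\Ad(\pi)\not\simeq\Ad(\pi^\prime)$, so the Rankin--Selberg factor is holomorphic at $s=1$. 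A Hoffstein--Ramakrishnan-type positivity argument applied to the self-dual isobaric representation $\Pi\boxplus\check\Pi\boxplus 1$ then eliminates the possible real exceptional zero inside the classical region, completing the argument.
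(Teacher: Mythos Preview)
The paper does not prove Theorem \ref{zero-free3}; it simply records it as Theorem 4.12(b) of Ramakrishnan--Wang \cite{RW03} (with the remark explaining why their extra hypothesis is automatic here). Your plan to rederive the statement from Theorem \ref{zero-free} applied to the ${\rm GL}_4$ lift $\Pi=\pi\boxtimes\pi'$ is a reasonable idea, but two of the steps do not go through.

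First, the dichotomy you assert in the self-dual case is false. From $\check\pi\simeq\pi\otimes\omega_\pi^{-1}$ and $\check\pi'\simeq\pi'\otimes\omega_{\pi'}^{-1}$ one gets $\check\Pi\simeq\Pi\otimes(\omega_\pi\omega_{\pi'})^{-1}$, so $\Pi$ is self-dual as soon as $\omega_\pi\omega_{\pi'}$ is trivial, with no further relation between $\pi$ and $\pi'$ forced. In particular one can have $\Pi\simeq\check\Pi$ with $\pi'\not\simeq\check\pi$ and neither $\pi$ nor $\pi'$ self-dual.

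Second, and more importantly, the ``Hoffstein--Ramakrishnan-type positivity argument'' you invoke does not, on its own, exclude the exceptional zero. With $\Pi$ self-dual and $\Sigma=\Pi\boxplus 1$ one finds $L(s,\Sigma\times\check\Sigma)=L(s,\Pi\times\Pi)\,L(s,\Pi)^2\,\zeta_F(s)$, which has a pole of order $2$ at $s=1$ while a putative real zero $\beta$ of $L(s,\Pi)$ contributes a zero of order $2$; the standard lemma (at most pole-order many real zeros in $(1-c/\log C,1)$) gives no contradiction. Taking $\Sigma=\Pi\boxplus\check\Pi\boxplus 1$ only makes the imbalance worse (pole order $5$ versus zero order $4$). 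Producing an auxiliary $L$-function with the right pole/zero imbalance is exactly the nontrivial content of \cite{RW03}; their argument uses further functorial input for ${\rm GL}_2$ (symmetric powers) and is not a routine positivity step. In short, what you are calling a final ``observation'' is the theorem the paper is citing.
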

\begin{proof}
This is Theorem 4.12(b) in \cite{RW03}.
\end{proof}

\begin{remark}
In Theorem 4.12(b) in \cite{RW03}, it is further assumed that $\pi$ and $\check{\pi}^{\prime}$ are not twist-equivalent by a product of a quadratic character and $|\cdot|^{i\tau}$. However, since $\pi^\prime$ is a cuspidal representation of ${\rm GL}_2 (\mathbb{A}_F)$, $\check{\pi}^\prime\simeq \pi^\prime \otimes \omega_{\pi^\prime}^{-1}$, where $\omega_{\pi^\prime}$ is the central character of $\pi^\prime$. Thus, if $\pi^\prime$ is not twist-equivalent to $\pi$, then $\pi$ and $\check{\pi}^{\prime}$ are not twist-equivalent by a product of a quadratic character and $|\cdot|^{i\tau}$.
\end{remark}
\begin{corollary}
\label{gl2}
Let $\pi$ and $\pi^\prime$ be non-dihedral cuspidal representations of ${\rm GL}_2(\mathbb{A}_F)$. Assume that $\pi^\prime$ is not twist-equivalent to $\pi$.
Then $L(s, \pi\times\pi^\prime\times\chi)$ satisfies a classical zero-free region for any given ray class character $\chi$  modulo $\mathfrak{q}$.
\end{corollary}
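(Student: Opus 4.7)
The plan is to deduce Corollary \ref{gl2} directly from Theorem \ref{zero-free3} applied to the pair $(\pi\otimes\chi,\pi^\prime)$, using the identity $L(s,\pi\times\pi^\prime\times\chi)=L(s,(\pi\otimes\chi)\times\pi^\prime)$ built into the definition \eqref{twist0}. For this it suffices to check three things: that $\pi\otimes\chi$ is a non-dihedral cuspidal representation of ${\rm GL}_2(\mathbb{A}_F)$; that $\pi^\prime$ is not twist-equivalent to $\pi\otimes\chi$; and that the quantity $A_{(\pi\otimes\chi)\times\pi^\prime}(|t|+2+\lambda)^{4n_F}$ occurring in Theorem \ref{zero-free3} is bounded by a fixed power of ${\rm N}\mathfrak{q}\,(|t|+3)$, so that the resulting denominator in the zero-free region takes the shape required by \eqref{zfree}.

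The first two conditions are essentially formal. Since $\chi$ is an id\`ele class character, $\pi\otimes\chi$ is again cuspidal of ${\rm GL}_2$; and if $(\pi\otimes\chi)\otimes\psi\simeq\pi\otimes\chi$ for some non-trivial id\`ele class character $\psi$, then twisting both sides by $\chi^{-1}$ yields $\pi\otimes\psi\simeq\pi$, contradicting the non-dihedrality of $\pi$. Similarly, if $\pi^\prime\simeq(\pi\otimes\chi)\otimes\psi=\pi\otimes(\chi\psi)$ for some $\psi$, then $\pi^\prime$ would be twist-equivalent to $\pi$, contrary to hypothesis. Hence Theorem \ref{zero-free3} applies to the pair $(\pi\otimes\chi,\pi^\prime)$.

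For the third condition I would combine two bounds already assembled in the preliminaries. On the finite side, \eqref{Bushnell-2} together with \eqref{twist} (applied with $m=m^\prime=2$) gives
$$A_{(\pi\otimes\chi)\times\pi^\prime}\;\leq\;\mathfrak{q}\bigl((\pi\otimes\chi)\times\pi^\prime\bigr)\;\ll_{\pi,\pi^\prime}\;\mathfrak{q}(\pi\otimes\chi)^{2}\mathfrak{q}(\pi^\prime)^{2}\;\ll_{\pi,\pi^\prime}\;{\rm N}\mathfrak{q}^{4}.$$
On the infinite side, \eqref{infinity-twist} asserts that the archimedean parameters of $(\pi\otimes\chi)\times\pi^\prime$ differ from those of $\pi\times\pi^\prime$ by a constant drawn from a finite set depending only on $\pi$ and $\pi^\prime$, so the parameter $\lambda$ entering Theorem \ref{zero-free3} is $O_{\pi,\pi^\prime}(1)$. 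Substituting these estimates into the zero-free region furnished by Theorem \ref{zero-free3} and taking logarithms, one obtains a region of exactly the form \eqref{zfree} with a constant $c_{\pi,\pi^\prime}>0$ depending only on $\pi$ and $\pi^\prime$. Note also that Theorem \ref{zero-free3} provides this region with no possible exceptional real zero, which is actually stronger than what condition (iii) of Theorems \ref{main} and \ref{main2} requires.

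I do not anticipate a genuine obstacle here: all of the analytic work is done inside Theorem \ref{zero-free3}, and what remains is the bookkeeping of how the conductor and infinity data of a Rankin--Selberg $L$-function transform under twisting by a ray class character, which is fully handled by \eqref{Bushnell-2}, \eqref{twist}, and \eqref{infinity-twist}.
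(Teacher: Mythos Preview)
Your proposal is correct and follows essentially the same route as the paper: verify that $(\pi\otimes\chi,\pi^\prime)$ satisfies the hypotheses of Theorem~\ref{zero-free3}, then bound the conductor via \eqref{Bushnell-2} and the archimedean contribution uniformly in $\chi$. The only cosmetic difference is that the paper controls $\lambda$ by directly bounding the infinite parameters of $\pi\otimes\chi$ in terms of those of $\pi$ (via $|\kappa_{\pi\otimes\chi}(i,v)|\le |\kappa_\pi(i,v)|+1$), whereas you invoke \eqref{infinity-twist}; note that $\lambda$ in Theorem~\ref{zero-free3} refers to the parameters of $\pi\otimes\chi$ and $\pi^\prime$ individually rather than those of the Rankin--Selberg product, so the cleaner justification is to apply \eqref{infinity-twist} with $\pi^\prime=1$ (or simply quote the bound the paper uses).
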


\begin{proof}
Under the given assumptions, $\pi\otimes\chi$ and $\pi^\prime$ are non-dihedral, and moreover $\pi^\prime$ is not twist-equivalent to $\pi\otimes \chi$. In addition, from the theory of Rankin-Selberg $L$-functions (see, e.g., \cite[p. 97]{IK} for $F=\mathbb{Q}$), we have the relation
{$$|\kappa_{\pi\otimes \chi}(i, j, v)|\leq |\kappa_{\pi}(i, v)|+|\kappa_\chi(j, v)|\leq |\kappa_{\pi}(i, v)|+1$$}between
the infinite parameters of $\pi \otimes \chi$ and the infinite parameters of $\pi$. Thus the claimed assertion is a direct corollary of Theorem \ref{zero-free3} and \eqref{Bushnell-2}.
\end{proof}

We next review some results on the existence and the locations of the exceptional zeros of $L$-functions. We start by a Siegel-type bound on the location of the exceptional zeros of the ray class $L$-functions.

\begin{theorem}
\label{Grossen}
Let $\chi$ be a ray class character modulo $\mathfrak{q}$. Let $\beta_{\chi}$ be the possible exceptional zero of $L(s, \chi)$. Then, given $\epsilon>0$,  
there is a constant $\kappa (\epsilon)$, depending only on $\epsilon$, such that
$$
\beta_{\chi} \leq 1-\frac{\kappa(\epsilon)}{{\rm N} \mathfrak{q}^\epsilon}.
$$
\end{theorem}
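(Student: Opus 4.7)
\emph{Proof plan.} The plan is to adapt Siegel's classical argument for Dirichlet $L$-functions to the ray class setting. First, I would note that any exceptional real zero must come from a real (order-two) ray class character, since for non-self-dual $\chi$ the complex zeros of $L(s,\chi)$ come in conjugate pairs and any real zero would force a duplicated factor contradicting the simple-zero hypothesis; so I may assume $\chi$ has order dividing $2$.

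The key auxiliary object is, for two real ray class characters $\chi_1$ modulo $\mathfrak{q}_1$ and $\chi_2$ modulo $\mathfrak{q}_2$, the product
$$F(s)=\zeta_F(s)\,L(s,\chi_1)\,L(s,\chi_2)\,L(s,\chi_1\chi_2).$$
Because each $\chi_i$ takes values in $\{\pm 1\}$, an examination of the local Euler factor at an unramified $\mathfrak{p}$ (which is either $(1-{\rm N}\mathfrak{p}^{-s})^{-4}$ or $(1-{\rm N}\mathfrak{p}^{-2s})^{-2}$) shows that $F(s)$ has a Dirichlet series with \emph{non-negative} coefficients and leading term $1$. The function $F(s)$ has a single simple pole at $s=1$, with residue bounded above by $\ll L(1,\chi_1)L(1,\chi_2)L(1,\chi_1\chi_2)$.

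Next, suppose that $L(s,\chi_1)$ has an exceptional zero $\beta_1$ close to $1$. Using this zero to cancel the pole of $F$, combined with the contour-integration/Hadamard-product bound for $F(\sigma)$ in the critical strip and the conductor estimate \eqref{Bushnell-2} to control the analytic conductor of $\chi_1\chi_2$ in terms of ${\rm N}\mathfrak{q}_1{\rm N}\mathfrak{q}_2$, one derives an inequality of the shape
$$1-\beta_2 \;\gg_{\chi_1,\beta_1}\; ({\rm N}\mathfrak{q}_2)^{-A(1-\beta_1)}$$
for some absolute constant $A>0$, valid for any real $\chi_2$ with exceptional zero $\beta_2$. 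The non-negativity of coefficients yields $F(\sigma)\geq 1$ in a suitable range, while bounding $F$ from above using the factor $(\sigma-\beta_1)$ in $L(\sigma,\chi_1)$ produces the stated lower bound on $1-\beta_2$.

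Finally, I would apply the classical Siegel dichotomy. Either (Case A) there exists a real ray class character $\chi^*$ of some modulus $\mathfrak{q}^*$ with exceptional zero $\beta^*\geq 1-\epsilon/(2A)$, in which case the preceding step, applied with $\chi_1=\chi^*$ and $\chi_2=\chi$, gives
$$1-\beta_\chi \;\gg_{\chi^*,\beta^*}\; ({\rm N}\mathfrak{q})^{-A\cdot\epsilon/(2A)} \;=\; ({\rm N}\mathfrak{q})^{-\epsilon/2}\;\geq\;\kappa(\epsilon)\,({\rm N}\mathfrak{q})^{-\epsilon};$$
or (Case B) no such $\chi^*$ exists, in which case every real ray class character already satisfies $\beta_\chi\leq 1-\epsilon/(2A)$, and the bound is immediate with $\kappa(\epsilon)=\epsilon/(2A)$. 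In either case a $\kappa(\epsilon)>0$ with the desired property exists; the ineffectivity of $\kappa(\epsilon)$ enters exactly in Case A, since one cannot explicitly produce or exclude the auxiliary character $\chi^*$. The main obstacle is the careful control of the archimedean contributions and the possible non-primitivity of $\chi_1\chi_2$ in the upper bound for $F(\sigma)$; this is routine over number fields using the standard estimates (cf. Fogels or Lagarias--Odlyzko), provided one invokes \eqref{Bushnell-2} to pass between the analytic conductors of $\chi_1\chi_2$ and the individual $\chi_i$.
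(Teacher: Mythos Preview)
The paper does not supply its own argument here; it simply cites \cite[Section~1, Lemma~11]{Mi56}. Your outline is exactly the classical Siegel argument transported to ray class $L$-functions over a number field, which is essentially what that reference carries out, and the overall plan is correct.

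One small correction: your stated reason for restricting to real $\chi$ is not right---the zeros of $L(s,\chi)$ for complex $\chi$ are \emph{not} symmetric about the real axis (conjugation sends zeros of $L(s,\chi)$ to zeros of $L(s,\bar\chi)$, a different function). The correct justification is that for non-real $\chi$ the standard zero-free region argument, using that $\chi^2$ is non-principal (equivalently, Corollary~\ref{pi} with $m=1$ and $\pi$ trivial), already excludes any real zero near $s=1$, so no exceptional zero can arise in that case.
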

\begin{proof}
See \cite[Section 1, Lemma 11]{Mi56}.
\end{proof}

The following result summarizes some cases for which the non-existence of exceptional zeros is known.

\begin{theorem}
\label{Siegelzero}
(i) Let $\pi$ be a cuspidal representation of $\GL_n(\mathbb{A}_F)$, and  assume that either $\pi$ is not self-dual or $n=2, 3$. Then 
$L(s, \pi)$ does not admit an exceptional zero.

\noindent (ii) Let $\pi$ and $\pi'$ be non-dihedral cuspidal representations of $\GL_2(\mathbb{A}_F)$  that are not twist-equivalent. Then $L(s, \pi \times \pi^\prime)$ admits no exceptional zero.

%
\end{theorem}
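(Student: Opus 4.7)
The plan is to split into cases and reduce each to Proposition \ref{zero-free2} or a known result from the literature. For part (i), if $\pi$ is not self-dual, Proposition \ref{zero-free2} applied with trivial $\chi$ directly furnishes a classical zero-free region for $L(s, \pi)$ in which the ``possible real exceptional zero'' clause is vacuous, since that clause is reserved for the self-dual case. When $\pi$ is self-dual with $n = 2$ and non-dihedral, Theorem \ref{GJ}(ii) yields $L(s, \pi \times \pi) = L(s, \Ad(\pi))\, \zeta_F(s)$ with $\Ad(\pi)$ cuspidal on $\GL_3(\mathbb{A}_F)$, and the Hoffstein--Lockhart argument (together with the Goldfeld--Hoffstein--Lieman appendix) rules out a Siegel zero by showing that any such zero would force $L(1, \Ad(\pi))$ to fall below its standard Rankin--Selberg lower bound. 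For dihedral $\pi$, $L(s, \pi)$ factors as a Hecke $L$-function over a quadratic extension and the conclusion follows from the analogous argument of Hoffstein--Ramakrishnan; and for self-dual $\pi$ with $n = 3$, I would invoke the corresponding result of Banks (alternatively, one can pass through $\Sym^2 \pi$ on $\GL_6(\mathbb{A}_F)$ and run a positive-coefficient Dirichlet series argument of the same flavour).

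For part (ii), the non-dihedral, not twist-equivalent hypothesis together with Theorem \ref{GJ}(i) implies that $\Pi := \pi \boxtimes \pi'$ is a cuspidal representation of $\GL_4(\mathbb{A}_F)$ with $L(s, \Pi) = L(s, \pi \times \pi')$. If $\Pi$ is not self-dual, Proposition \ref{zero-free2} applied to $\Pi$ immediately forbids an exceptional zero. The remaining, harder sub-case is when $\Pi$ is self-dual: since no unconditional no-Siegel-zero theorem is known for general cuspidal $\GL_4(\mathbb{A}_F)$ representations, one must exploit the specific $\GL_2 \times \GL_2$ Rankin--Selberg structure of $\Pi$ via a Ramakrishnan--Wang-type argument, constructing an auxiliary isobaric Rankin--Selberg $L$-function with non-negative Dirichlet coefficients (roughly of the shape $L(s, (\pi \boxtimes \pi') \times (\check\pi \boxtimes \check{\pi}'))$) and deriving from its analytic behaviour at $s = 1$ a lower bound incompatible with a Siegel zero close to $1$.

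The main obstacle is precisely this final self-dual $\GL_4$ sub-case in (ii), where the argument is not general but relies crucially on the Rankin--Selberg origin of $\Pi$; the remaining cases are either immediate from Proposition \ref{zero-free2} or reduce cleanly to the classical Hoffstein--Lockhart--Banks framework invoked above.
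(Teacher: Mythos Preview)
The paper's proof is purely by citation: part (i) follows from \cite[Corollary 3.2 and Theorem C(3)]{HR} together with \cite[Theorem 1]{Ba97}, and part (ii) from \cite[Theorem A]{RW03}. Your proposal is an unpacking of what those references contain, and it lands on the same sources (Hoffstein--Ramakrishnan, Banks, Ramakrishnan--Wang), so in substance the two approaches agree.

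A few small corrections to your sketch. First, Proposition \ref{zero-free2} carries no $\chi$ parameter, so ``applied with trivial $\chi$'' is a slip; for the non-self-dual case you may either invoke Proposition \ref{zero-free2} directly (checking that $L(s,\pi\times\pi)$ is entire when $\pi\not\simeq\check\pi$) or quote Corollary \ref{pi} with $\chi$ trivial. Second, the self-dual $n=2$ case is due to Hoffstein--Ramakrishnan \cite{HR}, not Hoffstein--Lockhart; the latter paper (with the Goldfeld--Hoffstein--Lieman appendix) gives effective lower bounds for $L(1,\Ad(\pi))$, which is closely related but a different statement. Third, your treatment of the dihedral sub-case is too quick: writing $L(s,\pi)=L(s,\psi)$ for a Hecke character $\psi$ of a quadratic extension does not by itself rule out a Siegel zero, since degree-one $L$-functions can certainly have them. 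The argument in \cite{HR} instead proceeds via a Rankin--Selberg positivity construction and covers dihedral and non-dihedral $\pi$ uniformly; it does not rest on that factorisation in the way you suggest.
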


\begin{proof}
Part (i) is a consequence of  \cite[Corollary 3.2]{HR},  \cite[Theorem C(3)]{HR}, and  \cite[Theorem 1]{Ba97}.
 Part (ii) follows from  \cite[Theorem A]{RW03}.
\end{proof}


Finally, we deduce a log-free zero-density estimate for an automorphic representation twisted by a ray class character.

\begin{theorem}\label{log-free}
Let $F$ be a number filed of degree $n_F$. 
Let $\Pi = \boxplus_i \pi_i$ be an automorphic representation for   $\GL_m(\Bbb{A}_{F})$, where each $\pi_i$ is a cuspidal representation for   $\GL_{m_i}(\Bbb{A}_{F})$. Set 
$$
N(\sigma,T, \Pi) =\#\{\rho=\Re(\rho) + i\Im(\rho) \mid  L(\rho,\Pi)=0, ~\Re(\rho)\ge \sigma,~ |\Im(\rho)|\le T \}. 
$$
Then there is an absolute constant $c_1>0$ such that for $T\ge 1$ and $0\leq \sigma\leq 1$, one has
$$
N(\sigma,T, \Pi)\ll_\Pi  \sum_i m_i^2(\mathfrak{q}(\pi_i) T^{n_F})^{ c_1 m_i^2(1-\sigma)}.
$$
Consequently, given a ray class character $\chi$ modulo $\mathfrak{q}$, for $T\ge 1$ and $0\leq \sigma \leq 1$, there is a positive constant $d_{\Pi}$ such that
$$N(\sigma, T, \Pi \times \chi) \ll_{\Pi} \left(({\rm N} \mathfrak{q})T\right)^{d_{\Pi}(1-\sigma)}.$$

\end{theorem}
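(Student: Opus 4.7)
The plan is to reduce the isobaric statement to the cuspidal case via the factorization $L(s,\Pi)=\prod_i L(s,\pi_i)$, invoke a standard log-free zero-density bound for each factor, and then derive the twisted consequence from the conductor inequalities \eqref{Bushnell-2} and \eqref{infinity-twist}.

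First, since $\Pi \simeq \boxplus_i \pi_i$ gives $L(s,\Pi)=\prod_i L(s,\pi_i)$, every zero of $L(s,\Pi)$ (counted with multiplicity) is a zero of at least one $L(s,\pi_i)$, and hence
$$N(\sigma,T,\Pi) \leq \sum_i N(\sigma,T,\pi_i).$$
The task therefore reduces to establishing, for each cuspidal representation $\pi$ of $\GL_m(\mathbb{A}_F)$ and any $T\ge 1$, $0\le \sigma\le 1$, the bound
$$N(\sigma,T,\pi) \ll_\pi m^2\,\bigl(\mathfrak{q}(\pi)\,T^{n_F}\bigr)^{c_1 m^2(1-\sigma)}$$
with an absolute constant $c_1>0$.

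For the cuspidal log-free density bound I would appeal to the now-standard zero-detection machinery: construct a mollifier $M_X(s)$ from a truncated Dirichlet series for $L(s,\pi)^{-1}$ so that $L(s,\pi) M_X(s)$ detects zeros in the strip $\sigma \le \Re(s) \le 1$; expand $|L(s,\pi)M_X(s)|^2$ and average over putative zeros using a large-sieve / mean-value inequality for Dirichlet polynomials, with the second-moment input supplied by the polar behavior of $L(s,\pi\times\check\pi)$ at $s=1$ (whose Dirichlet coefficients are non-negative by \cite{HR}); finally, combine with the convexity/functional-equation estimates that introduce the analytic-conductor factor $\mathfrak{q}(\pi)T^{n_F}$. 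The main technical obstacle is the number-field large-sieve step, where one must carefully track the exponent $T^{n_F}$ (arising from the $n_F$ archimedean gamma factors) and the power $m^2$ in front of $(1-\sigma)$ to ensure that the constant $c_1$ can be taken absolute, i.e., independent of $\pi$.

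For the consequence, apply the first estimate to $\Pi\otimes\chi \simeq \boxplus_i(\pi_i\otimes\chi)$. Each $\pi_i\otimes\chi$ is a cuspidal representation of $\GL_{m_i}(\mathbb{A}_F)$, and \eqref{Bushnell-2} together with \eqref{infinity-twist} yields $\mathfrak{q}(\pi_i\otimes\chi)\ll_{\pi_i}({\rm N}\mathfrak{q})^{m_i}$. Substituting gives
$$N(\sigma,T,\Pi\times\chi) \ll_\Pi \sum_i m_i^2\bigl(({\rm N}\mathfrak{q})^{m_i}\,T^{n_F}\bigr)^{c_1 m_i^2(1-\sigma)}.$$
Taking $d_\Pi$ to be any constant exceeding $c_1 n_F \max_i m_i^3$ absorbs both the $({\rm N}\mathfrak{q})^{m_i}$ and $T^{n_F}$ contributions and dominates the right-hand side by $\bigl(({\rm N}\mathfrak{q})T\bigr)^{d_\Pi(1-\sigma)}$, which is the desired estimate.
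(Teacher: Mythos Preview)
Your proposal is correct and follows essentially the same route as the paper: reduce to the cuspidal factors via $L(s,\Pi)=\prod_i L(s,\pi_i)$, invoke the cuspidal log-free zero-density estimate, and then handle the twist through the conductor bound $\mathfrak{q}(\pi_i\otimes\chi)\ll_{\pi_i}({\rm N}\mathfrak{q})^{m_i}$. The only difference is that the paper simply cites \cite[Corollary~1.2]{LO-T19} for the cuspidal bound rather than sketching the zero-detection argument, and it uses the pre-packaged inequality \eqref{twist} (which itself follows from \eqref{Bushnell-2}) for the conductor step.
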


\begin{proof}
For each $i$,  \cite[Corollary  1.2]{LO-T19} asserts that
$$
N(\sigma,T, \pi_i)
\ll m_i^2(\mathfrak{q}(\pi_i) T^{n_F})^{ c_1 m_i^2(1-\sigma)}
$$
for $1/2 \leq \sigma \leq 1$ and $T\geq n_F$. Now the first part of the theorem follows immediately from the fact that  $N(\sigma,T, \Pi) = \sum_i N(\sigma,T, \pi_i) $. Finally, by employing the bound \eqref{twist} for 
$\mathfrak{q}(\pi_i \times \chi)$,
we conclude the proof. (Note that the above bound trivially extends to $0\leq \sigma \leq 1$.)
\end{proof}

\section{Theorem \ref{main} implies Theorem \ref{first} }
\label{Section3}

We need to show that the conditions (i), (ii), (iii), and (iv) of Theorem \ref{main} hold for pairs $\pi$ and $\pi^\prime$ associated with $\Pi$ satisfying either  (a), (b), or (c). Note that $\pi^\prime=1$ in (a) and $\pi^\prime=\check{\pi}$ in (b). 
We observe that, by Propositions \ref{Landau} and \ref{Wu-Ye},  (i) holds for $\pi$ and $\pi^\prime$ associated with $\Pi$ in (a), (b), or (c). We now establish (ii), (iii), and (iv),  for corresponding $\pi$ and $\pi^\prime$ in (a), (b), or (c).

(a)
The condition (ii) is true, since for any character $\chi$,
$\pi \otimes \chi$ is  a cuspidal representation of ${\rm GL}_m(\mathbb{A}_F)$, with $m>1$,  and thus $L(s, \pi\times 1\times \chi)$ is holomorphic. Also, by Corollary \ref{pi}, (iii) holds. 

To verify the condition (iv), we first note that, by Theorem \ref{Siegelzero}(i),  for $m=2$ and $3$
 none of the $L(s,\pi\times1\times\chi)$'s admit an exceptional zero in their classical zero-free region. 
If  $\pi$ is not essentially self-dual, then each $\pi\otimes \chi$ is not self-dual. (Suppose, on the contrary, that for some character $\chi$ the contragredient representation of $\pi\otimes \chi$ is equivalent to $\pi \otimes \chi$.
A direct calculation shows that ${\pi} \simeq \check{\pi} \otimes\bar{\chi}^2 $, a contradiction.) Therefore, by Corollary \ref{pi}, $L(s, \pi\times 1\times \chi)$ admits no exceptional zero if  $\pi$ is not essentially self-dual.
Thus,  (iv) holds trivially.


(b) 
%
Let $\Pi\simeq \pi\boxtimes  {\check{\pi}}$. Then, 
by  Theorem \ref{GJ}(ii) we have   
\begin{equation}\label{factor22}
L(s, \pi\times  \check{\pi}\times\chi) =L(s,\Ad(\pi)\otimes\chi)L(s,\chi)
\end{equation}
for any ray class character $\chi$.
Since $\Ad(\pi)$ is cuspidal,  $\Ad(\pi)\otimes\chi$ is also cuspidal, and so $L(s,\Ad(\pi)\otimes\chi)$ is holomorphic. Hence, if $L(s,\pi\times  \check{\pi} \times\chi)$ admits a pole, then it is contributed by $L(s, \chi)$. This happens only if $\chi$ is the principal character $\chi_0$.  
Thus (ii) holds.

To verify (iii), we note that since $\Ad(\pi)$ is self-dual, by  \eqref{factor22}, Corollary \ref{self-dual2}, and the classical zero-free region for $L(s, \chi)$, we  deduce that $L(s,\pi\times  {\check{\pi}}\times\chi)$ has  either no zeros or possibly only one simple real zero $\beta_\chi$ in the region
\begin{equation}
\label{cpi}
\sigma \geq 1-\frac{c_{\pi}}{\log{\left( ({\rm N} \mathfrak{q})(|t|+3)\right)} }
\end{equation}
for some $c_{\pi}>0$ only depending on $\pi$. More precisely, this region is obtained by the intersection of the zero-free region for $L(s, \Ad(\pi)\otimes \chi)$ given by Corollary \ref{self-dual2} with the classical zero-free region for $L(s, \chi)$ given in Corollary \ref{pi}. Note that since $\chi$ has degree one, then the constant $c$ in the classical zero-free region 
for $L(s, \chi)$ is absolute, so $c_\pi$ in \eqref{cpi} is independent of $\chi$. 
Thus, (iii) holds.

Now, as $\Ad(\pi)\otimes\chi$ is cuspidal and of degree 3,  the first part  of Theorem \ref{Siegelzero} yields the non-existence of the exceptional zero for $L(s,\Ad(\pi)\otimes\chi)$. Thus, if the exceptional zero $\beta_\chi$ of $L(s, \pi\times  \check{\pi}\times\chi)$ exists, it has to come from $L(s, \chi)$. Therefore, $\beta_\chi$ depends only on $\chi$.
In this case,
by Theorem \ref{Grossen},
there is a constant $\kappa (\epsilon)$, depending only on $\epsilon$, such that
$$
\beta_\chi \leq 1-\frac{\kappa(\epsilon)}{{\rm N} \mathfrak{q}^\epsilon},
$$
where $\mathfrak{q}$ is the modulus of the ray class character $\chi$. Thus, (iv) holds trivially.
\smallskip

(c) Under the assumptions, we know that $\pi\otimes \chi$ and $\pi^\prime$ are non-dihedral cuspidal representations of ${\rm GL}_2(\mathbb{A}_F)$ that are not twist-equivalent. Thus, it follows from Theorem \ref{GJ}(i) that $L(s, \pi\times\pi^\prime\times\chi)=L(s, (\pi \otimes \chi) \boxtimes \pi^\prime)$ for the cuspidal representation $(\pi \otimes \chi) \boxtimes \pi^\prime$ of ${\rm GL}_4(\mathbb{A}_F)$. So, $L(s, \pi\times \pi^\prime\times \chi)$ is entire.
This settles (ii).

The condition (iii) is a direct consequence of Corollary \ref{gl2}.


Finally, by Theorem \ref{Siegelzero}(ii), we know that under the given conditions 
$L(s,\pi\times \pi'\times \chi)$ has no exceptional zero. Thus, (iv) holds.

\smallskip
Hence, by Theorem \ref{main}, for $\Pi$ satisfying either (a), (b), or (c),   we have that for any $\gamma>0$, there exists  $c=c(\Pi)>0$ such that for any ideal $\mathfrak{q}$, with ${\rm N}{\mathfrak{q}}\leq (\log{x})^\gamma$, and any ideal $\mathfrak{a}$ relatively prime to $\mathfrak{q}$, \eqref{second} holds. Now \eqref{twotwo} follows from \eqref{second} and Proposition \ref{Wu-Ye}.

\section{Proof of Theorem \ref{main}}
\label{four}

In this section, we prove Theorem \ref{main}. 
We start by collecting some analytic properties of $L(s, \pi\times \pi^\prime\times \chi)$ in the following lemma. 
\begin{lemma}\label{Majid}
(i) For $\delta>0$, let $\mathbb{C}(\delta)$ denote the set
$$\mathbb{C}\backslash\{s\in\mathbb{C}:~
|s+\kappa_{\pi\times \pi^\prime\times \chi}(i,j,v)+2k|\leq\delta,\ \text{for } v\in S_\infty,\ 1\leq i\leq m,\ 1\leq j\leq m^\prime,\ \text{and integers } k\geq0\}.$$
Let $\sigma\leq -1/2$. Then for all $s=\sigma+it\in\mathbb{C}(\delta)$,
$$\dfrac{L'(s, \pi\times \pi^\prime\times \chi)}{L(s, \pi\times \pi^\prime\times\chi)}\ll_{\pi, \pi^\prime, \delta}\log{\left(\left({\rm N} \mathfrak{q} \right)|s|\right)}.$$
(ii) For any integer $m\geq2$, there is $T_m$, with $m\leq T_m\leq m+1$, such that
$$\dfrac{L'(\sigma\pm iT_m, \pi\times\pi^\prime\times \chi)}{L(\sigma\pm iT_m, \pi\times \pi^\prime\times \chi)}\ll_{\pi, \pi^\prime}
\log^2{\left(\left({\rm N} \mathfrak{q} \right) T_m\right)}$$
uniformly for $-2\leq\sigma\leq2$.\\
(iii) Let $N(t, \pi\times\pi^\prime\times\chi)$  
be the number of the zeros $\rho$ of $L(s, \pi\times \pi^\prime\times \chi)$ 
in the region $0\leq\Re(s)\leq1$, 
where 
$t-1\leq \Im(\rho) \leq t+1$. Then
$$N(t, \pi\times \pi^\prime\times \chi)\ll_{\pi, \pi^\prime}\log{\left(({\rm N}\mathfrak{q})(|t|+3)\right)}.$$
(iv) Let $$b_{\pi, \pi^\prime}(\chi)= \lim_{s\rightarrow 0} \left(\dfrac{L'(s, \pi\times\pi^\prime\times\chi)}{L(s, \pi\times\pi^\prime\times\chi)}-\dfrac{r}{s} \right),$$
where the 
integer $r\geq 0$ is the order of vanishing of $L(s, \pi\times\pi^\prime\times\chi)$ at $s=0$.
Then $$b_{\pi, \pi^\prime}(\chi)=O_{\pi, \pi^\prime}(\log{{\rm N} \mathfrak{q}})
-\sum_
{\substack{{0< |\Re(\rho)|\leq 1}\\ { |\Im(\rho)|\leq 1}}} \frac{1}{\rho},$$
where $\rho\neq 0$ ranges over the  non-trivial zeros of $L(s, \pi \times \pi^\prime\times \chi)$.
\end{lemma}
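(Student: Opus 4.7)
The plan is to derive all four parts from the functional equation and Hadamard product of the completed Rankin-Selberg $L$-function $\Lambda(s,\pi\times\pi'\times\chi)$. Two preliminary reductions are crucial: via \eqref{Bushnell-2} applied to $(\pi\otimes\chi,\pi')$ together with \eqref{twist}, one has $\log\mathfrak{q}(\pi\times\pi'\times\chi)\ll_{\pi,\pi'}\log{\rm N}\mathfrak{q}$ uniformly in $\chi$; and by \eqref{infinity-twist} combined with \eqref{infinite}, the archimedean parameters of $L(s,\pi\times\pi'\times\chi)$ differ from those of $L(s,\pi\times\pi')$ by a bounded, $\pi,\pi'$-dependent shift. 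These observations convert the analytic-conductor bounds in what follows into the uniform $\log{\rm N}\mathfrak{q}$ dependence claimed throughout.

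For (i), take the logarithmic derivative of $\Lambda(s)=\varepsilon\,Q^{1/2-s}\Lambda(1-s,\check\pi\times\check{\pi'}\times\bar\chi)$. For $\sigma\leq -1/2$ the shifted point $1-s$ has real part $\geq 3/2$, where $L'/L$ is $O_{\pi,\pi'}(1)$ by absolute convergence and \eqref{bound-general}. The remaining pieces consist of $\log Q\ll_{\pi,\pi'}\log{\rm N}\mathfrak{q}$ plus a finite sum of $\Gamma'/\Gamma$ values at arguments of the form $(s+\kappa)/2$; Stirling gives $\Gamma'/\Gamma(z)\ll\log|z|$ whenever $z$ stays at distance $\geq\delta/2$ from the non-positive integers, which is exactly the condition $s\in\mathbb{C}(\delta)$.

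For (iii) and (ii), I would use the standard partial-fraction expansion from the Hadamard product,
\[
\frac{L'}{L}(s)=B-\frac{r_0}{s}-\frac{r_1}{s-1}+\sum_\rho\Big(\frac{1}{s-\rho}+\frac{1}{\rho}\Big)-\frac{L_\infty'}{L_\infty}(s),
\]
where $r_0,r_1$ are the orders of the possible poles of $\Lambda$ at $s=0,1$ and $L_\infty$ denotes the archimedean factor. Evaluating at $s=2+it$, the left-hand side is $O_{\pi,\pi'}(1)$; the $B$ and $L_\infty'/L_\infty$ terms are $O_{\pi,\pi'}(\log({\rm N}\mathfrak{q}(|t|+3)))$ by Stirling and the conductor bound; taking real parts and keeping only zeros with $|\Im\rho-t|\leq 1$ (each contributing $\gg 1$ to $\Re(1/(2+it-\rho))$) yields (iii). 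For (ii), (iii) furnishes $O_{\pi,\pi'}(\log({\rm N}\mathfrak{q}\,m))$ zeros with $|\Im\rho-m-\tfrac12|\leq\tfrac32$, so pigeonhole on sub-intervals of $[m,m+1]$ of length $c/\log({\rm N}\mathfrak{q}\,m)$ produces $T_m$ with $\min_\rho|T_m-\Im\rho|\gg 1/\log({\rm N}\mathfrak{q}\,T_m)$. Subtracting the expansion at $2\pm iT_m$ from the one at $\sigma\pm iT_m$ and splitting the zero-sum into $|\Im\rho-T_m|\leq 1$ ($O(\log({\rm N}\mathfrak{q}\,T_m))$ terms each of size $O(\log({\rm N}\mathfrak{q}\,T_m))$) and $|\Im\rho-T_m|>1$ (summed by partial summation with (iii)) gives the $O(\log^2({\rm N}\mathfrak{q}\,T_m))$ bound.

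For (iv), apply the same expansion, subtract $r/s$, and let $s\to 0$: after regrouping the conditionally convergent sum $\sum_\rho(1/(s-\rho)+1/\rho)$ via the functional-equation symmetry between $\Lambda(s)$ and $\Lambda(1-s,\check\pi\times\check{\pi'}\times\bar\chi)$, the limit equals $-\sum_\rho 1/\rho$ plus contributions from $B$, the $r_1$ term at $s=1$, and $-L_\infty'/L_\infty(0)$, all of size $O_{\pi,\pi'}(\log{\rm N}\mathfrak{q})$ by Stirling and \eqref{Bushnell-2}. Truncating the zero-sum to $|\Im\rho|\leq 1$ via partial summation with (iii) introduces an $O_{\pi,\pi'}(\log{\rm N}\mathfrak{q})$ tail, and the at-most-$O(\log{\rm N}\mathfrak{q})$ zeros on the imaginary axis with $\rho\neq 0$ contribute $O_{\pi,\pi'}(\log{\rm N}\mathfrak{q})$ in aggregate, accounting for the restriction $0<|\Re\rho|$ in the displayed sum. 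The main obstacle is this rearrangement in (iv): $\sum 1/\rho$ converges only conditionally, so one must symmetrize via the functional equation before truncating while preserving the $\log{\rm N}\mathfrak{q}$-uniformity; the other three parts are comparatively mechanical once the preliminary conductor bounds are in hand.
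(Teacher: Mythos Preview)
Your proposal is correct and follows the standard route---functional equation plus Stirling for (i), the Hadamard partial-fraction expansion for (ii)--(iv)---that underlies the references the paper simply cites (Moreno for (i), Liu--Ye for (ii) and (iii), Iwaniec--Kowalski Proposition~5.7(2) together with \eqref{Bushnell-2} and \eqref{infinity-twist} for (iv)). Your identification of the conductor reductions via \eqref{Bushnell-2} and \eqref{infinity-twist} as the device that makes all bounds uniform in $\chi$ is exactly what the paper singles out, and your flagging of the conditional-convergence/symmetrization issue in (iv) is the one place where genuine care is needed.
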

\begin{proof}
For (i) see \cite[p. 177]{Mor} for a single ${\rm GL}_2$ automorphic $L$-function over $F=\mathbb{Q}$. The general case is similar. See \cite[ Lemma 4.3(a)(d)]{LY} for (ii) and (iii) for $F=\mathbb{Q}$, again the proof for general $F$ is similar. 
The assertion (iv) is a consequence \cite[Proposition 5.7(2)]{IK}, \eqref{Bushnell-2}, and \eqref{infinity-twist}.
\end{proof}

The following lemma evaluates a contour integral that will appear in the proof.

\begin{lemma}
\label{integral}
Let $b>1$, $2\leq T\leq x$, and ${\rm N} \mathfrak{q} \leq x$. We have
\begin{align}\label{f_expa_0}
 \begin{split} 
&\frac{1}{2\pi i} \int_{b-iT}^{b+iT}  -\frac{L^\prime}{L}(s, \pi\times \pi^\prime\times \chi) \frac{x^s}{s} ds 
=\delta(\pi \times \pi^\prime\times \chi) x
- \sum_{\substack{{0< |\Re(\rho)|\leq 1}\\ {|\Im(\rho)|\leq T}}}  \frac{x^\rho}{\rho}\\
&- b_{\pi, \pi^\prime}(\chi)+O_{\pi, \pi^\prime} \left( \frac{x\log{x}}{T} \right)+O_{\pi, \pi^\prime} \left(x^{ {1-\frac{1}{m^2+1}-\frac{1}{(m^\prime)^2+1}}}  \right) ,
 \end{split}
\end{align}
where $\delta(\pi\times\pi^\prime\times \chi)$ is $1$ if $L(s, \pi\times\pi^\prime\times\chi)$ has a simple pole at $s=1$ and is zero otherwise,
$\rho\neq 0$ ranges over the  non-trivial zeros of $L(s, \pi \times \pi^\prime\times \chi)$,
and $b_{\pi, \pi^\prime}(\chi)$ is the expression defined in part (iv) of Lemma \ref{Majid}. 
\end{lemma}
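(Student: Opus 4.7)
The plan is to evaluate the integral by applying the residue theorem to the rectangular contour $\mathcal{R}_U$ with vertices $b \pm iT$ and $-U \pm iT$, and then sending $U\to\infty$. By Lemma \ref{Majid}(ii), I would perturb $T$ by a bounded amount so that the horizontal edges lie in a strip free of zeros of $L(s,\pi\times\pi'\times\chi)$, and choose $U$ to avoid the vertical ladders of trivial zeros. Inside $\mathcal{R}_U$, the integrand $-\tfrac{L'}{L}(s,\pi\times\pi'\times\chi)\,x^s/s$ has poles at: $s=1$, contributing $\delta(\pi\times\pi'\times\chi)\,x$ from the possible simple pole of $L$; $s=0$, contributing $-b_{\pi,\pi'}(\chi)$ by the definition recalled in Lemma \ref{Majid}(iv); each nontrivial zero $\rho$ with $|\Im\rho|\leq T$ and $0<|\Re\rho|\leq 1$, contributing $-x^\rho/\rho$; and the trivial zeros $s=-\kappa_{\pi\times\pi'\times\chi}(i,j,v)-2k$ with $-U\leq\Re(s)<0$.

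For the tail estimates on the three auxiliary sides, I would bound the two horizontal edges via Lemma \ref{Majid}(ii): using $|s|\geq T$ and $|L'/L(\sigma\pm iT)|\ll_{\pi,\pi'}\log^2(\mathrm{N}\mathfrak{q}\cdot T)$, integrating $x^\sigma$ from $-U$ up to $b$ (with $b=1+1/\log x$) gives a contribution of size $O_{\pi,\pi'}(x\log x/T)$, after using $\mathrm{N}\mathfrak{q},T\leq x$ to absorb the $\log^2$ into a single factor of $\log x$ times a bigger constant. The left vertical edge at $\Re(s)=-U$ is controlled by Lemma \ref{Majid}(i): the integrand is $\ll x^{-U}\log(\mathrm{N}\mathfrak{q}\,U T)/U$, so this contribution tends to $0$ as $U\to\infty$ for any fixed $x>1$.

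To collect the trivial zero residues, I note that for each triple $(i,j,v)$ the trivial zero closest to the imaginary axis lies at $s=-\kappa_{\pi\times\pi'\times\chi}(i,j,v)$, with residue bounded by $x^{-\Re\kappa}/|\kappa|$. By \eqref{infinite} together with the twist relation \eqref{infinity-twist}, the quantity $|\Re\kappa_{\pi\times\pi'\times\chi}|$ is at most $1-\tfrac{1}{m^2+1}-\tfrac{1}{(m')^2+1}$ up to a shift absorbed into the implied constant, yielding the individual bound $O_{\pi,\pi'}\bigl(x^{1-1/(m^2+1)-1/((m')^2+1)}\bigr)$; summing the further trivial zeros at $s=-\kappa-2k$ for $k\geq 1$ produces a convergent geometric-type series of strictly smaller order, which is absorbed into the same big-$O$.

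The main obstacle is the careful bookkeeping at $s=0$ and along the trivial-zero ladders: one must combine the double singularity arising from the $1/s$ factor and the possible vanishing of $L$ at $s=0$ (handled exactly by the definition of $b_{\pi,\pi'}(\chi)$ in Lemma \ref{Majid}(iv)) with the infinite sum of trivial zero residues, and use the infinity-parameter bounds \eqref{infinite}--\eqref{infinity-twist} to produce a clean error $O_{\pi,\pi'}\bigl(x^{1-1/(m^2+1)-1/((m')^2+1)}\bigr)$ that is independent of the auxiliary parameters $b$, $T$, and $U$. Assembling the residues from Step~1 with the tail bounds from Steps~2--3 and the trivial-zero estimate produces \eqref{f_expa_0}.
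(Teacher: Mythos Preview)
Your proposal is correct and follows essentially the same contour-shifting argument as the paper, which simply cites \cite[Chapter 19]{D} and \cite[Proposition 4.2]{ANM} and records that the error terms $O(\log x)+O_{\pi,\pi'}\bigl(x\log^2((\mathrm{N}\mathfrak{q})T)/(T\log x)\bigr)+O_{\pi,\pi'}\bigl(x\log((\mathrm{N}\mathfrak{q})T)/T\bigr)$ combine into $O_{\pi,\pi'}(x\log x/T)$ under the hypotheses $T,\mathrm{N}\mathfrak{q}\le x$. The only point worth flagging is that after perturbing $T$ to the $T_m$ of Lemma~\ref{Majid}(ii) you must use Lemma~\ref{Majid}(iii) to bound the $O\bigl(\log((\mathrm{N}\mathfrak{q})T)\bigr)$ zeros with $T\le|\Im\rho|\le T_m$, and that the residue at $s=0$ is actually $-r\log x-b_{\pi,\pi'}(\chi)$, with the extra $-r\log x$ absorbed into the $O(\log x)$ term above; both are routine and implicit in the references.
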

\begin{proof}
The proof is standard and follows closely the arguments given in \cite[Chapter 19]{D} for the case $\pi=\pi^\prime=1$ over $F=\mathbb{Q}$  and the arguments given in Proposition 4.2 of \cite{ANM} for a single automorphic $L$-function $\pi$ over $F=\mathbb{Q}$. In fact \eqref{f_expa_0} is a consequence of computing the residues of the integrand upon moving the line of integration to the left {and employing}  \eqref{infinite} and parts (i), (ii), and (iii) of Lemma \ref{Majid}.  Note that the errors terms 
$$O(\log{x})+O_{\pi, \pi^\prime} \left( \frac{x\log^2(({\rm N} \mathfrak{q}) T)}{T\log{x}} \right)+O_{\pi, \pi^\prime} \left( \frac{x\log(({\rm N} \mathfrak{q}) T)}{T} \right)$$
appearing in the process can be combined as the first error term in \eqref{f_expa_0} under the assumptions $T\leq x$ and ${\rm N} \mathfrak{q} \leq x$. See \cite[Proposition 4.2]{ANM} for details.
\end{proof}

We also need a version of the truncated Perron's formula due to Liu and Ye  \cite[Theorem 2.1]{LY07}.

\begin{lemma}
\label{LYe}
Let $f(s)=\sum_{n=1}^{\infty} \frac{a_n}{n^s}$ be an absolutely convergent series in the half-plane $\sigma >\sigma_a$.  Let $B(\sigma)=\sum_{n=1}^{\infty} \frac{|a_n|}{n^\sigma}$ for $\sigma>\sigma_a$. Then for $b>\sigma_a$, $x\geq 2$, $T\geq 2$, and $H\geq 2,$
$$\sum_{n\leq x} a_n=\frac{1}{2\pi i} \int_{b-iT}^{b+iT} f(s) \frac{x^s}{s}ds+ O \left( \sum_{x-x/H<n\leq x+x/H} |a_n| \right) +O\left(\frac{Hx^bB(b)}{T} \right).$$
\end{lemma}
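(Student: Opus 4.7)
The plan is to reduce the truncated Perron formula to the classical kernel estimate
$$\frac{1}{2\pi i}\int_{b-iT}^{b+iT}\frac{y^s}{s}\,ds = \mathbf{1}_{y>1} + E(y,T), \qquad |E(y,T)| \ll \frac{y^b}{T\,|\log y|}$$
(for $y>0$, $y\neq 1$), which is the standard contour shift plus residue calculation. Granted this, I would invoke the absolute convergence of $f$ on the line $\sigma = b$ to exchange sum and integral, obtaining
$$\frac{1}{2\pi i}\int_{b-iT}^{b+iT} f(s)\frac{x^s}{s}\,ds - \sum_{n\leq x} a_n = -\sum_{n\geq 1} a_n\, E(x/n, T),$$
which reduces the task to a termwise bound of the error kernel against $|a_n|$.

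Next I would partition the $n$-sum into three ranges calibrated by $H$. For $n$ outside $[x/2, 2x]$, $|\log(x/n)| \geq \log 2$, and the kernel bound yields a total contribution of order $x^b B(b)/T$. For $n \in [x/2, 2x]$ with $|n-x| > x/H$, the elementary estimate $|\log(x/n)| \gg |n-x|/x \gg 1/H$ promotes the bound to $|E(x/n, T)| \ll H (x/n)^b/T$; summing gives $\ll H x^b B(b)/T$. Together, these two ranges account for the second error term of the lemma, and no smaller parameter than $H$ is needed to handle them.

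The near-diagonal range $|n - x| \leq x/H$ is where the hard part lies, since the $1/|\log(x/n)|$ bound degenerates as $n \to x$. I would treat it by the trivial pointwise estimate $|E(x/n, T)| \ll (x/n)^b \ll 1$ (which holds because $n \asymp x$), producing the first error term $\sum_{x - x/H < n \leq x + x/H}|a_n|$ directly. Assembling the three ranges yields Lemma \ref{LYe}. A slicker route, and the one Liu and Ye take, replaces the sharp cutoff by a piecewise-linear smoothing of width $2x/H$ whose Mellin transform gains an extra $|s|^{-1}$ decay: the $H$-dependence is then baked into the kernel from the start, no case split is required, and the same short-interval sum emerges as the smoothing error.
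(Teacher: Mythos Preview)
The paper does not prove Lemma \ref{LYe}; it simply quotes it from Liu--Ye \cite[Theorem 2.1]{LY07}. Your outline is a correct proof and is the classical derivation of the truncated Perron formula (compare, e.g., \cite[Chapter 17]{D}): the kernel bound, the interchange of sum and integral, and the three-range split according to $|n-x|$ relative to $x/H$ all go through as you describe. Your closing remark is also accurate: Liu--Ye's own argument replaces the sharp cutoff by a trapezoidal weight with ramps of width $x/H$, so that the Mellin transform picks up an extra $|s|^{-1}$ and the $H$-dependence enters through the kernel rather than through a case analysis, with the short-interval sum $\sum_{|n-x|\le x/H}|a_n|$ arising as the smoothing discrepancy.
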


We now have all the necessary tools for the proof in our disposal.

\begin{proof}[Proof of Theorem \ref{main}]
We only describe the proof that implies \eqref{second}. The argument can be adjusted to obtain \eqref{first-2}. 
Let $\epsilon_0>0$ be as given in the assumption (i) in the statement of the theorem.
Assume that $x\geq 2$, $T\geq 4$,  $T\leq x^{\epsilon_0}$, and ${\rm N} \mathfrak{q} \leq x$.
In Lemma \ref{LYe}, set $H={T^{1/2}}$, $b=1+1/\log{x}$, and $f(s)=-\frac{L'}{L}(s, \pi\times\pi^\prime\times\chi)$. 
Then employing Proposition \ref{Landau} for $u=x/T^{1/2}$ and the bound \eqref{bound-general}
for $\mathfrak{p}\mid (A_\pi, A_{\pi^\prime})$ yield
\begin{align}\label{perron}
 \begin{split}
\sideset{}{^*}\sum_{{\rm N}\mathfrak{n}\leq x} \Lambda(\mathfrak{n}) a_{\pi\times \pi^\prime}(\mathfrak{n}) \chi(\mathfrak{n})
&=\ \frac{1}{2\pi i} \int_{b-iT}^{b+iT}  -\frac{L^\prime}{L}(s, \pi\times \pi^\prime\times \chi) \frac{x^s}{s} ds\\
&+O_{\pi, \pi^\prime} \left(\frac{x\log{x}}{T^{1/2}} \right)+O_{\pi, \pi^\prime} \left( x^{ {1-\frac{1}{m^2+1}-\frac{1}{(m^\prime)^2+1}}} (\log{x})^2\right).
 \end{split}
\end{align}

For the integral in \eqref{perron}, by employing Lemma \ref{integral} and part (iv) of Lemma \ref{Majid},
we deduce 

\begin{align}\label{f_expa}
 \begin{split} 
&\frac{1}{2\pi i} \int_{b-iT}^{b+iT}  -\frac{L^\prime}{L}(s, \pi\times \pi^\prime\times \chi) \frac{x^s}{s} ds 
=\delta(\pi \times \pi^\prime\times \chi) x
- \sum_{\substack{{0< |\Re(\rho)|\leq 1}\\ { |\Im(\rho)|\leq T}
}}  \frac{x^\rho}{\rho}\\
&+\sum_{
\substack{{0< |\Re(\rho)|\leq 1}\\ 
{ |\Im(\rho)|\leq 1}\\{\rho\neq \beta_\chi, 1-\beta_\chi}
}} \frac{1}{\rho}+\frac{1}{\beta_\chi}+\frac{1}{1-\beta_\chi}+ O_{\pi, \pi^\prime} \left( \frac{x\log{x}}{T} \right)+O_{\pi, \pi^\prime} \left( x^{ {1-\frac{1}{m^2+1}-\frac{1}{(m^\prime)^2+1}}} \right),
 \end{split}
\end{align}
where $\beta_\chi$ is the possible exceptional zero of $L(s, \pi\times\pi^\prime\times\chi)$, and, as later, all terms contributed by $\beta_{\chi}$ should be omitted if $\beta_\chi$ does not exist.  

Next we focus on the sums in \eqref{f_expa} involving the non-trivial zeros $\rho$. First of all, by the assumption (iii) and the symmetry of the non-trivial zeros respect to line $\Re(s)=1/2$, for any low-lying zero $\rho\neq \beta_\chi, 1-\beta_\chi$, we have $\rho^{-1}=O\left(\log{{\rm N}(\mathfrak{q})}\right)$. Thus, by Lemma \ref{Majid}(iii), we deduce that   
\begin{equation}
\label{low}
\sum_{
\substack{{0< |\Re(\rho)|\leq 1}\\ 
{ |\Im(\rho)|\leq 1}\\{\rho \neq \beta_\chi, 1-\beta_\chi}
}} \frac{1}{\rho} =O_{\pi, \pi^\prime}(\log^2{{\rm N} \mathfrak{q}}),
\end{equation}
so this term can be absorbed in the second error term in the right-hand side of \eqref{f_expa}.
Secondly, by Lemma \ref{Majid}(iii), we have 
$$
\sum_{\substack{{0< |\Re(\rho)|\leq 1}\\ {3<|\Im(\rho)|\leq T}}
}\frac{1}{|\rho|}\ll\sum_{3\le t<T}\frac{N(t,\pi \times \pi^\prime\times \chi)}{t}\ll_{\pi, \pi^\prime} (\log T)\log{(({\rm N} \mathfrak{q}) T)} .
$$
This together with the assumption  (iii), the classical zero-free region of $L(s,\pi \times \pi^\prime\times \chi)$,  gives
\begin{equation}
\label{zero-T}
\sum_{\substack{{0< |\Re(\rho)|\leq 1}\\ {3<|\Im(\rho)|\leq T}
}}\Big|\frac{x^{\rho}}{\rho}\Big|\ll_{\pi, \pi^\prime} 
(\log T) (\log( ({\rm N} \mathfrak{q}) T))
x^{1-c_{\pi , \pi'}(\log(({\rm N}{\mathfrak{q}})(T+3))
)^{-1}}.
\end{equation}

Similarly, for non-exceptional zeros $\rho$ with $|\Im(\rho)|\leq 3$, the assumption (iii)
yields 
\begin{equation}
\label{zero-3}
\sum_{\substack{{0<|Re(\rho)|\leq 1}\\{|\Im(\rho)|\leq 3}
\\{ \rho\neq \beta_\chi, 1-\beta_\chi}}}\Big|\frac{x^{\rho}}{\rho}\Big|\ll_{\pi, \pi^\prime} {{(\log^2{{\rm N}{\mathfrak{q}}})}} x^{1-c_{\pi , \pi'}(\log( 6 {\rm N}{\mathfrak{q}} ))^{-1}},
\end{equation}
where $\beta_\chi$ is the possible exceptional zero of $L(s,\pi \times \pi^\prime\times \chi)$.
Now let $T=\exp( (\log{x})^{1/2})$. Then, for ${\rm N} \mathfrak{q} \leq  \exp((\log{x})^{1/2})$, \eqref{zero-T} and \eqref{zero-3} yield
\begin{equation}
\label{zero-3T}
\sum_{\substack{{0<|Re(\rho)|\leq 1}\\{|\Im(\rho)|\leq T}
\\{ \rho\neq \beta_\chi, 1-\beta_\chi}}}\Big|\frac{x^{\rho}}{\rho}\Big|\ll_{\pi, \pi^\prime} x \exp(-{\hat{c}}_{\pi, \pi^\prime} (\log{x})^{1/2})
\end{equation}
for a constant $\hat{c}_{\pi, \pi^\prime}$ depending only on $\pi$ and $\pi^\prime$.

Recall that $\chi$ is of modulus $\mathfrak{q}$ and that by the assumption (iv), for any $\epsilon>0$, there is a constant $\kappa(\epsilon):=\kappa(\epsilon, \pi, \pi^\prime)$, so that the exceptional zero $\beta_\chi$ of $L(s,\pi\times\pi^\prime\times \chi)$ satisfies $\beta_\chi\le  1-\kappa(\epsilon){\rm N} \mathfrak{q}^{-\epsilon}$. Thus, for 
${\rm N} \mathfrak{q}\le (\log x)^{\gamma}$, we have
\begin{equation}
\label{S-zero}
\frac{x^{\beta_\chi}-1}{\beta_\chi}+\frac{x^{1-\beta_\chi}-1}{1-\beta_\chi}\le 2 \frac{x^{\beta_\chi}-1}{1-\beta_\chi}\ll
 \frac{x^{1- \kappa(\epsilon)(\log x)^{-\epsilon\gamma}}}{\kappa(\epsilon)(\log x)^{-\epsilon\gamma}}.
\end{equation}

For $\gamma>0$,
let $\epsilon=1/3\gamma$. Then, for $x$ satisfying $ (\log{x})^\gamma \leq  \exp((\log{x})^{1/2})$, $T=\exp((\log{x})^{1/2})$, and ${\rm N} \mathfrak{q} \leq (\log{x})^\gamma$, from \eqref{zero-3T} and \eqref{S-zero}, we get
\begin{align}
\label{S-rho}
 \begin{split}
\sum_{\substack{{0< |\Re(\rho)|\leq 1}\\ {|\Im(\rho)|\leq T}\\{\rho \neq \beta_\chi, 1-\beta_\chi}
}}\Big|\frac{x^{\rho}}{\rho}\Big| &+ \left| \frac{x^{\beta_\chi}-1}{\beta_\chi}+\frac{x^{1-\beta_\chi}-1}{1-\beta_\chi}\right|\\
&\ll_{\pi, \pi^\prime} 
x \left( \exp(-{\hat{c}}_{\pi, \pi^\prime} (\log{x})^{1/2})+  
\kappa(1/3\gamma)^{-1} (\log{x})^{1/3} \exp(-\kappa(1/3\gamma) (\log{x})^{2/3})
\right).
  \end{split}
\end{align}
%

Let $C:=C(\pi, \pi^\prime, \gamma)$ be a positive constant such that for $x>C$, 
\begin{equation}
\label{T-choice}
\max\{4, (\log{x})^{\gamma}\} \leq T= \exp((\log x)^{1/2}) \leq x^{\epsilon_0}.
\end{equation}
Thus, for $x>C$, $T$ as in \eqref{T-choice}, and ${\rm N} \mathfrak{q}\le (\log x)^{\gamma}$, by employing \eqref{S-rho} in \eqref{f_expa}, the asymptotic formula \eqref{perron} can be written as
\begin{equation}
\label{final}
\sideset{}{^*}\sum_{{\rm N}\mathfrak{n}\leq x} \Lambda(\mathfrak{n}) a_{\pi\times \pi^\prime}(\mathfrak{n}) \chi(\mathfrak{n})=\delta(\pi \times \pi^\prime\times \chi) x+O_{\pi, \pi^\prime, \gamma}\left(x\exp\left(-{c}(\log{x})^{1/2}\right)\right)
\end{equation}
for some $c:=c(\pi, \pi^\prime)>0$.

The final result follows from \eqref{final}, the orthogonality property of ray class characters, i.e.,
\begin{equation}\label{DD}
\sideset{}{^*}\sum_{\substack {{\rm N} \mathfrak{n}\leq x\\ \mathfrak{n} \sim \mathfrak{a}~\mymod{ \mathfrak{q}} }} \Lambda(\mathfrak{n}) a_{\pi\times \pi^\prime}(\mathfrak{n})
=\frac{1}{h(\mathfrak{q})}\sum_{\chi\mymod{ \mathfrak{q}}}\overline{\chi}(\mathfrak{a})\sideset{}{^*}\sum_{{\rm N} \mathfrak{n}\leq x} \Lambda(\mathfrak{n}) a_{\pi \times \pi^\prime} (\mathfrak{n}) \chi(\mathfrak{n}),
\end{equation}
and the assumption (ii) in the statement of the theorem.
\end{proof}

\section{Theorem \ref{main2} implies Theorem \ref{third} }

\label{five}

It is enough to show that the conditions (i), (ii), (iii), (iv), and (v) of Theorem \ref{main2} hold for pairs $\pi$ and $\pi^\prime$ associated with $\Pi$ in Theorem \ref{third}. Then \eqref{last-main-est} together with Proposition \ref{Wu-Ye} imply \eqref{toto}. Note that (i), (ii), (iii) and (v) of Theorem \ref{main2} are the same as (i), (ii), (iii) and (iv) in Theorem \ref{main}. So following the arguments of Section \ref{Section3}, the conditions (i), (ii), (iii), and (v) hold for $\pi$ and $\pi^\prime$ associated with $\Pi$ in Theorem \ref{third}. Since $\Pi$ in (a) is cuspidal and, by Theorem \ref{GJ}(i),  in (b) ({resp.,} (c)) is automorphic ({resp.,} cuspidal), then, by Theorem \ref{log-free},  the condition (iv) of Theorem \ref{main2} also holds for the corresponding $\pi$ and $\pi^\prime$.

\section{Proof of Theorem \ref{main2}}
\label{six}
\begin{proof}[Proof of Theorem \ref{main2}]
We only describe the proof that implies \eqref{last-main-est} since the argument can be adjusted to establish \eqref{fourfour}. The proof closely follows the proof of Theorem 1 in \cite{Moto}.
Let $\epsilon_0>0$ be as 
given in the assumption (i) of the theorem and without loss of generality assume that $0<\epsilon_0\leq 4/5$. Let $x$, $T$, ${\rm N}\mathfrak{q}$, $H$, $b$, and $f(s)$ be as in the beginning of the proof of Theorem \ref{main}, so by following the initial steps of the proof of Theorem \ref{main} 
we get \eqref{perron} and \eqref{f_expa}. Let  $0<\theta\leq\epsilon_0/4$, $T=x^{4\theta}$,
$y=x^{1-\theta}$, and ${\rm N} \mathfrak{q} \leq x^\theta$.  Then, from \eqref{perron}, \eqref{f_expa}, and \eqref{low}, we deduce
\begin{equation}
\label{first-Motohashi}
\sideset{}{^*}\sum_{x-y< {\rm N} \mathfrak{n}\leq x }  \Lambda(\mathfrak{n})   a_{\pi\times \pi'}(\mathfrak{n})\chi(\mathfrak{n}) 
= \delta(\pi\times\pi'\times \chi)y -\sum_{\substack{{0<|\Re(\rho)|\leq 1}\\{|\Im(\rho)|\leq T}}} \hat{g}(\rho)  +O_{\pi,\pi'}(yx^{-\theta}\log x), 
\end{equation}
where
$\hat{g}$ is the Mellin transform of $g=\textbf{1}_{(x-y, x]}$ (the indicator function of the interval $(x-y, x]$).
Next, 
we note that for $T\geq 4$ we can find a constant $\tilde{c}_{\pi, \pi^\prime}$ such that $$\frac{{c}_{\pi,\pi'}}{\log  \left(({\rm N}\mathfrak{q})(T+3)\right)}\geq  \frac{\tilde{c}_{\pi,\pi'}}{\log \left(({\rm N}\mathfrak{q}) T\right)},$$
where $c_{\pi, \pi^\prime}$ is the constant given in the assumption (iii).

To control the zero-sum in \eqref{first-Motohashi}, we shall apply the assumptions (iii) {and} (iv) of the theorem, and \cite[Theorem 5.8]{IK} together with \eqref{Bushnell-2},
to deduce
\begin{align*}
\sum_{\rho\neq \beta_\chi} |\hat{g}(\rho)|
& \le \int_{x-y}^{x}  (\log t)\left( \int_0^{1-\frac{\tilde{c}_{\pi,\pi'}}{\log \left(({\rm N}\mathfrak{q}) T\right)}}  N(\sigma,T, \pi\times\pi'\times \chi) t^{\sigma-1}  d \sigma\right) dt\\
& + N(0,T, \pi\times\pi'\times \chi)\int_{x-y}^{x}  \frac{dt}{t} \\
&\ll_{\pi,\pi'} y (\log x) \int_{0}^{1-\frac{\tilde{c}_{\pi,\pi'}}{\log \left(({\rm N} \mathfrak{q}) T\right)}} (  ({\rm N}\mathfrak{q}) T)^{d_{\pi\times \pi^\prime}(1-\sigma)} x^{\sigma-1}  d \sigma+  (y/x) T\log \left(({\rm N}\mathfrak{q})  T\right),
\end{align*}
where $\beta_\chi$ is the possible exceptional zero of $L(s,\pi\times\pi'\times \chi)$. Now let  $\theta \leq 1/(10 d_{\pi, \pi^\prime})$ so that $\left( ({\rm N} \mathfrak{q})T\right) ^{d_{\pi, \pi^\prime}} \leq x^{1/2}$. (Recall that ${\rm N} \mathfrak{q} \leq x^\theta$ and $T=x^{4\theta}$.) With these choices, we get
\begin{equation}
\label{gbar-bound}
\sum_{\rho\neq \beta_\chi  } |\hat{g}(\rho)| \ll_{\pi, \pi^\prime} y \exp \left(-\frac{c_0}{\theta} \right)+y{x^{4\theta-1}\log{x}}
\end{equation}
for a constant $c_0$ depending on $\pi$ and $\pi^\prime$. Now, by choosing $x$ such that $(\log{x})^{-1/2} \leq \theta$ we deduce $x^{-\theta} \log{x} < \exp(-1/2\theta)$. So, by adjusting $\tilde{c}_{\pi, \pi^\prime}$, we see that the error terms $O_{\pi, \pi^\prime}(y x^{-\theta} \log{x})$ in \eqref{first-Motohashi} and $O_{\pi, \pi^\prime}(y x^{{4\theta}-1} \log{x})$ in \eqref{gbar-bound} can be absorbed in the term $y \exp \left(-\frac{c_0}{\theta} \right)$ in \eqref{gbar-bound}. (Note that $x^{4\theta-1} \leq x^{-\theta}$ since $\theta\leq \epsilon_0/4\leq 1/5$.)
Thus, inserting the derived bound for  $\sum_{\rho\neq \beta_\chi} |\hat{g}(\rho)|$
in \eqref{first-Motohashi} yields   
\begin{equation}
\label{second-Motohashi}
\sideset{}{^*}\sum_{x-y< {\rm N} \mathfrak{n}\leq x }  \Lambda(\mathfrak{n})  a_{\pi\times \pi'}(\mathfrak{n})\chi(\mathfrak{n}) 
= \delta(\pi\times\pi'\times \chi)y +O( |\hat{g}(\beta_\chi)|)+O\left(y \exp \left(-\frac{c_0}{\theta} \right) \right).   
\end{equation}

To treat the term involving $|\hat{g}(\beta_\chi)|$, observe that the mean value theorem implies that
$$|\hat{g}(\beta_\chi) | = \int_{x-y}^{x} t^{\beta_\chi -1} dt=\frac{x^{\beta_\chi}-(x-y)^{\beta_\chi}}{\beta_\chi}= y\xi^{\beta_\chi-1},$$
for some $\xi\in (x-y, x)$. 
We conclude that
$$|\hat{g} (\beta_\chi)| =O(y x^{\beta_\chi -1 }).$$

Now inserting this bound in \eqref{second-Motohashi} and employing the orthogonality relation  \eqref{DD}, together with the assumptions (ii) and (v) in the statement of the theorem,
imply the result.
\end{proof}


\begin{rezabib}

\bib{ANM}{article}{
   author={Akbary, Amir},
   author={Ng, Nathan},
   author={Shahabi, Majid},
   title={Limiting distributions of the classical error terms of prime
   number theory},
   journal={Q. J. Math.},
   volume={65},
   date={2014},
   number={3},
   pages={743--780},
   issn={0033-5606},
   review={\MR{3261965}},
   doi={10.1093/qmath/hat059},
}
	
\bib{AT}{article}{
   author={Akbary, Amir},
   author={Trudgian, Timothy S.},
   title={A log-free zero-density estimate and small gaps in coefficients of
   $L$-functions},
   journal={Int. Math. Res. Not. IMRN},
   date={2015},
   number={12},
   pages={4242--4268},
   issn={1073-7928},
   review={\MR{3356752}},
   doi={10.1093/imrn/rnu065},
}

\bib{Ba97}{article}{
   author={Banks, William D.},
   title={Twisted symmetric-square $L$-functions and the nonexistence of
   Siegel zeros on ${\rm GL}(3)$},
   journal={Duke Math. J.},
   volume={87},
   date={1997},
   number={2},
   pages={343--353},
   issn={0012-7094},
   review={\MR{1443531}},
   doi={10.1215/S0012-7094-97-08713-5},
}

%
\bib{B}{article}{
   author={Brumley, Farrell},
   title={Effective multiplicity one on ${\rm GL}_N$ and narrow zero-free
   regions for Rankin-Selberg $L$-functions},
   journal={Amer. J. Math.},
   volume={128},
   date={2006},
   number={6},
   pages={1455--1474},
   issn={0002-9327},
   review={\MR{2275908}},
}

%
%



\bib{D}{book}{
   author={Davenport, Harold},
   title={Multiplicative number theory},
   series={Graduate Texts in Mathematics},
   volume={74},
   edition={3},
   note={Revised and with a preface by Hugh L. Montgomery},
   publisher={Springer-Verlag, New York},
   date={2000},
   pages={xiv+177},
   isbn={0-387-95097-4},
   review={\MR{1790423}},
}

\bib{GJ}{article}{
   author={Gelbart, Stephen},
   author={Jacquet, Herv\'{e}},
   title={A relation between automorphic representations of ${\rm GL}(2)$
   and ${\rm GL}(3)$},
   journal={Ann. Sci. \'{E}cole Norm. Sup. (4)},
   volume={11},
   date={1978},
   number={4},
   pages={471--542},
   issn={0012-9593},
   review={\MR{533066}},
}

	
\bib{G}{article}{
   author={Goldstein, Larry Joel},
   title={A generalization of the Siegel-Walfisz theorem},
   journal={Trans. Amer. Math. Soc.},
   volume={149},
   date={1970},
   pages={417--429},
   issn={0002-9947},
   review={\MR{274416}},
   doi={10.2307/1995404},
}



\bib{HR}{article}{
   author={Hoffstein, Jeffrey},
   author={Ramakrishnan, Dinakar},
   title={Siegel zeros and cusp forms},
   journal={Internat. Math. Res. Notices},
   date={1995},
   number={6},
   pages={279--308},
   issn={1073-7928},
   review={\MR{1344349}},
   doi={10.1155/S1073792895000225},
}

\bib{HB}{article}{
   author={Humphries, Peter},
   author={Brumley, Farrell},
   title={Standard zero-free regions for Rankin-Selberg $L$-functions via
   sieve theory},
   journal={Math. Z.},
   volume={292},
   date={2019},
   number={3-4},
   pages={1105--1122},
   issn={0025-5874},
   review={\MR{3980284}},
   doi={10.1007/s00209-018-2136-8},
}
		
\bib{HT}{article}{
   author={Humphries, Peter},
   author={Thorner, Jesse},
   title={Zeros of Rankin-Selberg $L$-functions in families},
   journal={arXiv:2103.05634 [math.NT]},
   date={2021},
   pages={54 pages},
}

\bib{I00}{article}{
   author={Ichihara, Yumiko},
   title={The Siegel-Walfisz theorem for Rankin-Selberg $L$-functions
   associated with two cusp forms},
   journal={Acta Arith.},
   volume={92},
   date={2000},
   number={3},
   pages={215--227},
   issn={0065-1036},
   review={\MR{1752026}},
   doi={10.4064/aa-92-3-215-227},
}

\bib{IK}{book}{
   author={Iwaniec, Henryk},
   author={Kowalski, Emmanuel},
   title={Analytic number theory},
   series={American Mathematical Society Colloquium Publications},
   volume={53},
   publisher={American Mathematical Society, Providence, RI},
   date={2004},
   pages={xii+615},
   isbn={0-8218-3633-1},
   review={\MR{2061214}},
   doi={10.1090/coll/053},
}

\bib{KS}{article}{
   author={Kim, Henry H.},
   author={Shahidi, Freydoon},
   title={Functorial products for ${\rm GL}_2\times{\rm GL}_3$ and the
   symmetric cube for ${\rm GL}_2$},
   note={With an appendix by Colin J. Bushnell and Guy Henniart},
   journal={Ann. of Math. (2)},
   volume={155},
   date={2002},
   number={3},
   pages={837--893},
   issn={0003-486X},
   review={\MR{1923967}},
   doi={10.2307/3062134},
}

\bib{LO-T19}{article}{
   author={Lemke Oliver, Robert J.},
   author={Thorner, Jesse},
   title={Effective log-free zero density estimates for automorphic
   $L$-functions and the Sato-Tate conjecture},
   journal={Int. Math. Res. Not. IMRN},
   date={2019},
   number={22},
   pages={6988--7036},
   issn={1073-7928},
   review={\MR{4032182}},
   doi={10.1093/imrn/rnx309},
}

\bib{LY}{article}{
   author={Liu, Jianya},
   author={Ye, Yangbo},
   title={Superposition of zeros of distinct $L$-functions},
   journal={Forum Math.},
   volume={14},
   date={2002},
   number={3},
   pages={419--455},
   issn={0933-7741},
   review={\MR{1899293}},
   doi={10.1515/form.2002.020},
}

\bib{LY07}{article}{
   author={Liu, Jianya},
   author={Ye, Yangbo},
   title={Perron's formula and the prime number theorem for automorphic
   $L$-functions},
   journal={Pure Appl. Math. Q.},
   volume={3},
   date={2007},
   number={2, Special Issue: In honor of Leon Simon.},
   pages={481--497},
   issn={1558-8599},
   review={\MR{2340051}},
   doi={10.4310/PAMQ.2007.v3.n2.a4},
}

%
%


\bib{Mi06}{article}{
   author={Michel, Philippe},
   title={Analytic number theory and families of automorphic $L$-functions},
   conference={
      title={Automorphic forms and applications},
   },
   book={
      series={IAS/Park City Math. Ser.},
      volume={12},
      publisher={Amer. Math. Soc., Providence, RI},
   },
   date={2007},
   pages={181--295},
   review={\MR{2331346}},
   doi={10.1090/pcms/012/05},
}

\bib{Mi56}{article}{
   author={Mitsui, Takayoshi},
   title={Generalized prime number theorem},
   journal={Jap. J. Math.},
   volume={26},
   date={1956},
   pages={1--42},
   review={\MR{0092814}},
   doi={10.4099/jjm1924.26.0-1},
}


\bib{Mo}{article}{
   author={Moreno, Carlos Julio},
   title={The Hoheisel phenomenon for generalized Dirichlet series},
   journal={Proc. Amer. Math. Soc.},
   volume={40},
   date={1973},
   pages={47--51},
   issn={0002-9939},
   review={\MR{327682}},
   doi={10.2307/2038629},
}

\bib{Mor}{article}{
   author={Moreno, C. J.},
   title={Explicit formulas in the theory of automorphic forms},
   conference={
      title={Number Theory Day (Proc. Conf., Rockefeller Univ., New York,
      1976)},
   },
   book={
      publisher={Springer, Berlin},
   },
   date={1977},
   pages={73--216. Lecture Notes in Math., Vol. 626},
   review={\MR{0476650}},
}

\bib{Moto}{article}{
   author={Motohashi, Yoichi},
   title={On sums of Hecke-Maass eigenvalues squared over primes in short
   intervals},
   journal={J. Lond. Math. Soc. (2)},
   volume={91},
   date={2015},
   number={2},
   pages={367--382},
   issn={0024-6107},
   review={\MR{3355106}},
   doi={10.1112/jlms/jdu079},
}

\bib{P82}{article}{
   author={Perelli, Alberto},
   title={On the prime number theorem for the coefficients of certain
   modular forms},
   conference={
      title={Elementary and analytic theory of numbers},
      address={Warsaw},
      date={1982},
   },
   book={
      series={Banach Center Publ.},
      volume={17},
      publisher={PWN, Warsaw},
   },
   date={1985},
   pages={405--410},
   review={\MR{840485}},
}

\bib{R}{article}{
   author={Ramakrishnan, Dinakar},
   title={Modularity of the Rankin-Selberg $L$-series, and multiplicity one
   for ${\rm SL}(2)$},
   journal={Ann. of Math. (2)},
   volume={152},
   date={2000},
   number={1},
   pages={45--111},
   issn={0003-486X},
   review={\MR{1792292}},
   doi={10.2307/2661379},
}

\bib{RW03}{article}{
   author={Ramakrishnan, D.},
   author={Wang, S.},
   title={On the exceptional zeros of Rankin-Selberg $L$-functions},
   journal={Compositio Math.},
   volume={135},
   date={2003},
   number={2},
   pages={211--244},
   issn={0010-437X},
   review={\MR{1955318}},
   doi={10.1023/A:1021761421232},
}

\bib{ST}{article}{
   author={Soundararajan, Kannan},
   author={Thorner, Jesse},
   title={Weak subconvexity without a Ramanujan hypothesis},
   note={With an appendix by Farrell Brumley},
   journal={Duke Math. J.},
   volume={168},
   date={2019},
   number={7},
   pages={1231--1268},
   issn={0012-7094},
   review={\MR{3953433}},
   doi={10.1215/00127094-2018-0065},
}

\bib{T}{article}{
   author={Timofeev, N. M.},
   title={Distribution of arithmetic functions in short intervals in the
   mean with respect to progressions},
   language={Russian},
   journal={Izv. Akad. Nauk SSSR Ser. Mat.},
   volume={51},
   date={1987},
   number={2},
   pages={341--362, 447},
   issn={0373-2436},
   translation={
      journal={Math. USSR-Izv.},
      volume={30},
      date={1988},
      number={2},
      pages={315--335},
      issn={0025-5726},
   },
   review={\MR{897001}},
   doi={10.1070/IM1988v030n02ABEH001013},
}

\bib{PJ19}{article}{
   author={Wong, P.-J.},
   title={Bombieri-Vinogradov theorems for modular forms and applications},
   journal={Mathematika},
   volume={66},
   date={2020},
   pages={200--229},
   doi={10.1112/mtk.12014},
}

\bib{WY07}{article}{
   author={Wu, Jie},
   author={Ye, Yangbo},
   title={Hypothesis H and the prime number theorem for automorphic
   representations},
   journal={Funct. Approx. Comment. Math.},
   volume={37},
   date={2007},
   pages={461--471},
   issn={0208-6573},
   review={\MR{2364718}},
   doi={10.7169/facm/1229619665},
}

\end{rezabib}

\end{document}